\newcommand{\mcm}[3]{\newcommand{#1}[#2]{{\ensuremath{#3}}}} 
\mcm{\tuple}{1}{\langle #1 \rangle}
\mcm{\name}{1}{\ulcorner #1 \urcorner}
\mcm{\Nbb}{0}{\mathbb{N}}
\mcm{\Zbb}{0}{\mathbb{Z}}
\mcm{\Rbb}{0}{\mathbb{R}}
\mcm{\Cbb}{0}{\mathbb{C}}
\mcm{\Qbb}{0}{\mathbb{Q}}
\mcm{\Acal}{0}{\cal A}
\mcm{\Bcal}{0}{\cal B}
\mcm{\Ccal}{0}{\cal C}
\mcm{\Dcal}{0}{\cal D}
\mcm{\Ecal}{0}{\cal E}
\mcm{\Fcal}{0}{\cal F}
\mcm{\Gcal}{0}{\cal G}
\mcm{\Hcal}{0}{\cal H}
\mcm{\Ical}{0}{\cal I}
\mcm{\Jcal}{0}{\cal J}
\mcm{\Kcal}{0}{\cal K}
\mcm{\Lcal}{0}{\cal L}
\mcm{\Mcal}{0}{\cal M}
\mcm{\Ncal}{0}{\cal N}
\mcm{\Ocal}{0}{{\cal O}}
\mcm{\Pcal}{0}{{\cal P}}
\mcm{\Qcal}{0}{{\cal Q}}
\mcm{\Rcal}{0}{{\cal R}}
\mcm{\Scal}{0}{{\cal S}}
\mcm{\Tcal}{0}{{\cal T}}
\mcm{\Ucal}{0}{{\cal U}}
\mcm{\Vcal}{0}{{\cal V}}
\mcm{\Wcal}{0}{{\cal W}}
\mcm{\Xcal}{0}{{\cal X}}
\mcm{\Ycal}{0}{{\cal Y}}
\mcm{\Zcal}{0}{{\cal Z}}
\mcm{\Mfrak}{0}{\mathfrak M}
\mcm{\restric}{0}{\upharpoonright}
\mcm{\upset}{0}{\uparrow}
\mcm{\onto}{0}{\twoheadrightarrow}
\mcm{\smallNbb}{0}{{\small \mathbb{N}}}
\DeclareMathOperator{\preop}{op}
\mcm{\op}{0}{^{\preop}}
\newcommand{\se}{\subseteq}
\newcommand{\theoremize}[2]{\newaliascnt{#1}{thm} \newtheorem{#1}[#1]{#2} \aliascntresetthe{#1}}
\theoremstyle{plain}
\newtheorem{thm}{Theorem}[section]
\theoremstyle{definition}
\theoremstyle{plain}
\title{Characterising graphs with no subdivision of a wheel of bounded diameter
}
\author{Johannes Carmesin
\medskip 
\\
  {University of Birmingham}
}
\newcommand{\sm}{\setminus}
\DeclareMathOperator{\expl}{Ex_r}
\mcm{\Fbb}{0}{\mathbb{F}}
\begin{document}

\maketitle

\begin{abstract}
We prove that a graph has an $r$-bounded subdivision of a wheel if and only if it 
does not have a graph-decomposition of locality $r$ and width at most two. 
\end{abstract}

\section{Introduction}

In \cite{loc2sepr} we introduced local 
2-separators of graphs and proved a local version of the 2-separator theorem. Here, we give an 
application of that theorem. 

An important class of graphs is the class of series-parallel graphs. A well-known fact about this 
class is that a graph has a tree-decomposition of width at most two if and only if it has no 
$K_4$-subdivision. We prove an analogue of this fact in our new context 
of 
local 
separators. 

Examples of graphs of diameter $r$ are $r$-bounded subdivisions, see \autoref{fig1} for an 
example and \autoref{sec2xz} for details. The 
main result of this paper is 
the 
following.

\begin{thm}\label{intro_series-para}
 Let $G$ be a graph and $r\in \Nbb\cup\{\infty\}$ be a parameter. Then precisely one of 
the 
following holds.\vspace{-.1cm}
\begin{enumerate}
 \item $G$ has an $r$-bounded subdivision of a wheel;
 \item $G$ has a graph-decomposition of locality $r$ and width at 
most two. 
\end{enumerate}

\end{thm}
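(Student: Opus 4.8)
The plan is to follow the classical proof that a graph has a tree-decomposition of width at most two if and only if it has no $K_4$-subdivision, replacing Tutte's decomposition into $3$-connected components by the local $2$-separator theorem of \cite{loc2sepr}. \textbf{First, the two outcomes exclude one another.} Suppose $G$ admits a graph-decomposition of locality $r$ and width at most two, and suppose for contradiction that $G$ also contains an $r$-bounded subdivision $S$ of a wheel. Being $r$-bounded localises $S$: it lies inside a ball whose radius is controlled by $r$, which is exactly the scale at which a graph-decomposition of locality $r$ behaves like a genuine tree-decomposition. Restricted to that ball the decomposition is thus an honest tree-decomposition of width at most two of a subgraph containing $S$, so the wheel that $S$ subdivides would have treewidth at most two; but every wheel, and hence every subdivision of a wheel, has treewidth three --- a contradiction. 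The only thing to check is that locality $r$ does yield an honest width-$\le 2$ tree-decomposition on the relevant local piece, which should be immediate from the definitions in \cite{loc2sepr}.

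\textbf{Second, at least one outcome holds}, which is the substantial direction. Assume $G$ has no $r$-bounded subdivision of a wheel. Apply the local $2$-separator theorem of \cite{loc2sepr} to $G$ with parameter $r$: it produces a graph-decomposition of $G$ of locality $r$ in which every part (torso) is either \emph{small} --- an edge or a short cycle, hence a bag of size at most three --- or has no essential local $2$-separator at scale $r$, that is, is ``locally $3$-connected at scale $r$'' (and the indexing graph of the decomposition may itself be cyclic, to accommodate long cycles of $G$). Everything then reduces to the key lemma: \emph{a part that is locally $3$-connected at scale $r$ and is not small contains an $r$-bounded subdivision of a wheel, and this subdivision remains $r$-bounded in $G$ once the virtual edges of the torso are expanded back into paths of $G$.} Granting the key lemma, the hypothesis on $G$ forces every part to be small, and a graph-decomposition all of whose parts are small has width at most two, as required.

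\textbf{The crux} is the key lemma. Its classical ancestor is that every $3$-connected graph on at least four vertices contains a $K_4$-subdivision; in the local setting one must instead exploit $3$-connectivity at scale $r$ to build, around a well-chosen vertex $v$, a hub near $v$ together with several internally disjoint paths of length $\approx r/2$ reaching a ``sphere'' at distance $\approx r/2$ from $v$, and then a cycle on that sphere to serve as the rim --- producing a wheel subdivision of diameter $O(r)$. The delicate points are: choosing $v$ and the sphere so that the constants come out exactly, so the subdivision is genuinely $r$-bounded and not merely $O(r)$-bounded; handling the degenerate local shapes (when no hub/rim structure is present the part is cycle-like, and the decomposition is built along that cycle instead); and verifying that expanding the virtual edges of the torso back into paths of $G$ preserves $r$-boundedness --- which is precisely what the locality of the decomposition buys, since it controls the paths those virtual edges represent. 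The case $r=\infty$ is subsumed throughout: then ``locally $3$-connected'' is ordinary $3$-connectivity, the key lemma collapses to the classical $K_4$-subdivision fact, and the argument specialises to the textbook proof of the series-parallel characterisation.
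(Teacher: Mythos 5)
Your overall architecture matches the paper's: decompose via the local block-cutvertex and local 2-separator theorems of \cite{loc2sepr}, reduce to a key lemma saying that an $r$-locally $3$-connected torso yields a wheel, and get mutual exclusivity from the wheel side. But the crux you defer is exactly where the real content lies, and the construction you sketch for it does not work as stated. Your plan is to pick a vertex $v$, grow internally disjoint spokes of length about $r/2$ to a ``sphere'' and take a rim there. Local $3$-connectivity at scale $r$ gives no cycle on such a sphere through the spoke endpoints, and even if it did, a piece of the resulting wheel (two spokes of length about $r/2$ plus a rim segment) has length roughly $2r$ or worse, so you obtain at best an $O(r)$-bounded subdivision --- you concede this yourself (``not merely $O(r)$-bounded'' is listed as a delicate point, not resolved). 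Since the theorem is exact, an $O(r)$ loss destroys it: the two outcomes would no longer be mutually exclusive and the statement proved would be a different, weaker one. The paper's proof of the key lemma (\autoref{3con_to_wheel}) occupies all of Section 4 and proceeds by an entirely different route: a dichotomy (\autoref{dichotomy}) between containing a geodesic cycle of length at most $r$ with at least four edges and being ``triangular''; the triangular case via an auxiliary graph on the edges at a $K_4^-$ (\autoref{3con_to_wheel_special}); and otherwise a parity argument over generating sets of short cycles to build pre-fans (\autoref{pre-fan-exists}), a reduction of pre-fans to fans (\autoref{pre-fan_to_fan}), and a theta-graph-plus-bridge analysis (\autoref{theta_short-path}, \autoref{single_school}) that detects an $r$-local subdivision of the $3$- or $4$-wheel. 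None of this is recoverable from the hub-and-sphere picture, so the key lemma is a genuine gap, not a checkable detail.

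Two further steps you treat as routine are also not. First, transferring the wheel from a torso back to $G$: torsos arise after cutting at local $1$- and $2$-separators, so a subgraph of a torso is a priori only a ``cut-subgraph'' of $G$; the paper needs the diameter bound \autoref{bd_diam} together with the technical \autoref{cut_sub} (with its sublemmas about slices and cloned vertices) to conclude it is an honest subgraph, and then \autoref{make_bounded} to pass from $r$-local to $r$-bounded. Saying that locality ``controls the paths those virtual edges represent'' is not a proof of this. Second, for exclusivity, your claim that a locality-$r$ graph-decomposition restricted to a ball containing the wheel is an honest tree-decomposition of width two is not immediate from the definitions and risks a factor-of-two loss in the radius; the paper instead argues through \autoref{are-3-con}, i.e.\ that $r$-weighted wheels are themselves $r$-locally $3$-connected, and lets the decomposition machinery of \cite{loc2sepr} do the work. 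So the proposal is a reasonable top-level outline, but the heart of the theorem is missing and the sketched replacement for it would only prove an inexact, $O(r)$ version.
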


\begin{comment}
 We stress that the characterisation of \autoref{intro_series-para} is \emph{exact} in the sense 
that the two classes given under 1 and 2 are mutually exclusive. This is different 
from many other theorems in Graph Minors Theory such as the grid theorem or the graph minor 
theorem, where the objects are only `asympotically mutually exclusive'. 
\end{comment}

   \begin{figure} [htpb]   
\begin{center}
   	  \includegraphics[height=4cm]{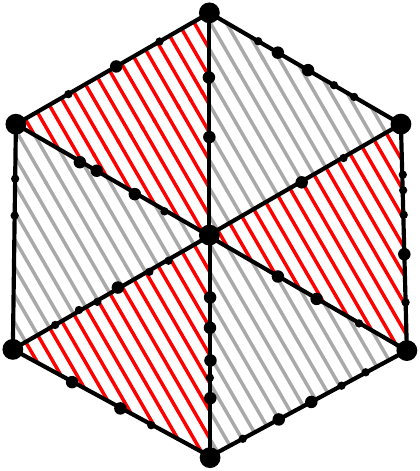}
   	  \caption{An example of a subdivision of a wheel that is $r$-bounded. Here all the 
shaded triangularly 
shaped faces have length at most $r$. 
}\label{fig1}
\end{center}
   \end{figure}

  {\bf Related results.} 
  Shallow minors were introduced in 1994 by Plotkin, Rao and Smith \cite{plotkin1994shallow}, 
  and bounded subdivisions are a particular type of shallow minors. 
Recently shallow minors received attention in the context of the systematic analysis of Sparsity, 
see in particular the book of Ossana de Mendez and Ne{\v{s}}et{\v{r}}il 
\cite{nevsetvril2012sparsity} and 
\cite{{eppstein2013grid},{gajarsky2018first},{kriesell2017unique},{muzi2017being}} for some recent 
work on shallow minors. While research in this area so far has focused on exploring the asymptotic 
relationships between various classes 
of sparse graphs, to my knowledge, 
\autoref{intro_series-para} is the first genuine example of an explicit characterisation by 
excluded shallow minors or bounded subdivisions.  
  
\vspace{0.3cm}  
  
The grid theorem \cite{{excludedGrid},{robertson1986graph}} says 
that a graph either has a large grid as a minor or else a tree-decomposition of bounded width. In 
spirit this is a similar statement to \autoref{intro_series-para}: the absence of a specific 
substructure is characterised by the existence of a global graph-decomposition. However, there is a 
difference in the quantification. Indeed, \autoref{intro_series-para} is \emph{exact} in the sense 
that the two conditions in there are \emph{mutually exclusive}, unlike for the grid 
theorem (even in the most recent version of \cite{chekuri2016polynomial}). 

Our research suggests the question whether there is an analogue of 
the grid theorem for shallow minors as follows. 

\begin{oque}
Given a parameter $r$, can you characterise the graphs with graph-decompositions of bounded width 
and locality at least $r$?
\end{oque}

\section{Bounded subdivisions}\label{sec2xz}

We assume that the reader is familiar with \cite{loc2sepr}; in particular with the definition of 
local cutvertices and local 2-separators. 
A connected graph is \emph{$r$-locally 2-connected} if it does not have an $r$-local cutvertex and 
it has a cycle of length at most $r$ (in particular such graphs have at least three vertices).
So there are no $r$-locally $2$-connected graphs for $r<3$. 
A graph is \emph{$r$-locally $2$-connected} if all its components are $r$-locally $2$-connected.

A connected $r$-locally 2-connected graph is \emph{$r$-locally 3-connected} if it does not have an 
$r$-local $2$-separator 
and it has 
at least four vertices.
A graph is \emph{$r$-locally $3$-connected} if it $r$-locally 2-connected and all its components 
are $r$-locally $3$-connected.

\begin{lem}\label{gen_loc_con}
 A 3-connected graph $G$ whose cycles of length at most $r$ generate all its cycles is 
$r$-locally $3$-connected.
 \end{lem}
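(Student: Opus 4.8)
The plan is to check the conditions in the definition of $r$-local $3$-connectivity one by one. Since $G$ is $3$-connected it has at least four vertices and is connected, so it remains to show that $G$ is $r$-locally $2$-connected and that $G$ has no $r$-local $2$-separator. For the former, note that $3$-connectivity forces minimum degree at least $3$, hence $|E(G)|\ge \frac{3}{2}|V(G)|>|V(G)|-1$, so the cycle space of $G$ is non-trivial; as it is by hypothesis generated by cycles of length at most $r$, there is at least one such cycle. Thus $G$ is $r$-locally $2$-connected as soon as we know it has no $r$-local cutvertex, and the whole lemma reduces to ruling out $r$-local cutvertices and $r$-local $2$-separators.

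Suppose first that $v$ is an $r$-local cutvertex. By the definition in \cite{loc2sepr}, $v$ is a cutvertex of the local structure around it, and each of the (non-empty) sides of this separation contains a neighbour of $v$; fix neighbours $a,b$ of $v$ on different sides. Define a $\mathbb{Z}/2$-linear map $\chi$ on the cycle space of $G$ by letting $\chi(D)$ be the number, modulo $2$, of edges of $D$ incident with $v$ that lie on the side of $a$. I claim $\chi$ vanishes on every cycle of length at most $r$. Indeed, such a cycle not through $v$ uses no edge at $v$; and a cycle $C$ of length $\ell\le r$ through $v$ has each of its vertices joined to $v$ by an arc of $C$ of length at most $\ell/2\le r/2$, so $C$ lies inside the ball of radius $\lfloor r/2\rfloor$ about $v$, hence inside the local structure, where $v$ is a cutvertex; therefore the two edges of $C$ at $v$ lie on the same side, and $\chi(C)=0$. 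Hence $\chi$ vanishes on all of the cycle space. On the other hand $G$ is $3$-connected, so $G-v$ is connected and contains an $a$--$b$ path $P$; the cycle formed by $P$ together with the path $a,v,b$ uses exactly one edge at $v$ on the side of $a$, so $\chi$ takes the value $1$ on it. This contradiction rules out $r$-local cutvertices.

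The case of an $r$-local $2$-separator $\{u,v\}$ follows the same template: from the local separation supplied by \cite{loc2sepr} one builds a $\mathbb{Z}/2$-linear functional on the cycle space that counts, modulo $2$, the edges incident with $\{u,v\}$ pointing to one fixed side; a cycle of length at most $r$ lies inside the ball about the separator, where $\{u,v\}$ genuinely separates, so it crosses an even number of times and the functional vanishes on it, while $3$-connectivity — via Menger's theorem and the connectedness of $G-\{u,v\}$ — produces a cycle on which the functional equals $1$, again contradicting the generation hypothesis. The main obstacle is the bookkeeping for this last step: unlike the cutvertex case the separator has two vertices and there may be an edge $uv$, so one must read off from the precise definition of $r$-local $2$-separator in \cite{loc2sepr} exactly which edges the functional should count, check that every short cycle really does respect the local separation (this is where the locality parameter $r$ enters), and arrange that the cycle obtained from $3$-connectivity is ``one-sided'' enough for the functional to detect it. Once the functional is set up correctly the contradiction is immediate, and together with the reductions above this shows that $G$ is $r$-locally $3$-connected.
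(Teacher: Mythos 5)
Your proposal is correct and takes essentially the same route as the paper: the paper rules out an $r$-local cutvertex by exactly your parity argument (a generating cycle of length at most $r$ through $v$ lies in $B_{r/2}(v)$, so its two edges at $v$ go to the same component of the punctured ball, while $3$-connectivity supplies a cycle meeting the edges towards one fixed component exactly once), phrased with an edge set $W$ rather than your functional $\chi$. The paper also dispatches the $2$-separator case with the single word ``analogous'', so your outline of that case is at the same level of detail as the paper's own proof.
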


 \begin{proof}
 By definition $3$-connected graphs have at least four vertices. 
 
Suppose for a contradiction that the graph $G$ has an $r$-local cutvertex. Call it $v$.  
Let $P$ be a path of $G$ joining two neighbours of $v$ in different components of the punctured 
ball $B_{r/2}(v)-v$. Then $P+v$ is a cycle. By assumption the cycle $P+v$ is generated by cycles of 
length at most $r$. Pick a components of $B_{r/2}(v)-v$ containing an endvertex of the path $P$. 
Denote 
it by $K$. 
Let $W$ be the set of edges incident with the vertex $v$ to the component $K$. 
As the cycle $P+v$ intersects the edge set $W$ oddly (in fact just once), there needs to be a 
generating cycle $o$ such 
that it intersects the edge set $W$ oddly; and so just once as it no more than two edges incident 
with the vertex $v$. As $o$ is a cycle of the ball $B_{r/2}(v)$, we conclude that $o-v$ is a subset 
of 
the component $K$. So $o$ intersects the edge set $W$ in two edges. This is a contradiction.
Thus $G$ has no $r$-local cutvertex.

The proof that $G$ has no $r$-local 2-separator is analogous. 
 \end{proof}

A \emph{wheel} is a graph obtained from a cycle by adding a single vertex and connecting it to all 
vertices of the cycle.  
A subdivision of a wheel is \emph{$r$-local} if its cycles of length at most $r$ generate all 
its cycles. An \emph{$r$-weighted wheel} is a graph isomorphic to a 
wheel with integer-valued length assigned to its edges such that if one replaced each edge by a 
path of its length, then the resulting graph is an $r$-local subdivision of a wheel. 

\begin{lem}\label{are-3-con}
$r$-weighted wheels are $r$-locally 3-connected. 
\end{lem}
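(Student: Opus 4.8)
The plan is to reduce to (the proof of) \autoref{gen_loc_con}. Write $W$ for the given $r$-weighted wheel, and let $H$ be the graph obtained from $W$ by replacing each edge $e$ by a path of length $\ell(e)$; by the definition of an $r$-weighted wheel, $H$ is an $r$-local subdivision of a wheel, so the cycles of $H$ of length at most $r$ generate the whole cycle space of $H$. Since every cycle of $H$ either avoids a subdivided edge entirely or traverses all of it (the interior vertices of a subdivided edge have degree $2$), the cycles of $H$ correspond bijectively and length-preservingly to the cycles of $W$ weighted by $\ell$, so establishing ``$H$ is $r$-locally $3$-connected'' is the same as the assertion of the lemma. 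First I would dispatch the easy requirements for $H$: it is connected; it has at least four vertices (a wheel already has at least four, and subdividing only adds vertices); it is $2$-connected, because a wheel is $2$-connected and replacing an edge by a path preserves $2$-connectedness; and its cycle space is non-trivial and generated by cycles of length at most $r$, so $H$ has at least one cycle of length at most $r$. It therefore remains only to check that $H$ has no $r$-local cutvertex and no $r$-local $2$-separator.

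For this I would re-run the argument proving \autoref{gen_loc_con}, the point being that that argument used $3$-connectedness only to know that the graph has at least four vertices and that the graph minus a vertex is connected (so that the path $P$ there exists) — both of which we have just verified for $H$ directly — together with the hypothesis that short cycles generate the cycle space. In detail: suppose for contradiction that $v$ is an $r$-local cutvertex of $H$. As in \autoref{gen_loc_con}, choose a path $P$ of $H$ joining two neighbours of $v$ in distinct components of $B_{r/2}(v)-v$, let $K$ be one such component, and let $F$ be the set of edges from $v$ into $K$. The cycle $P+v$ meets $F$ in an odd number of edges (in fact exactly one), so some generating cycle $o$ of length at most $r$ meets $F$ oddly; as $o$ is a cycle it uses $0$ or $2$ edges at $v$, hence it uses exactly one edge into $K$ and one edge at $v$ whose other endpoint lies outside $K$. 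But a cycle of length at most $r$ through $v$ is contained in $B_{r/2}(v)$, so $o-v$ is a connected subgraph of $B_{r/2}(v)-v$ meeting both $K$ and another component — impossible. Hence $H$ has no $r$-local cutvertex, and the $r$-local $2$-separator case is handled in the same way, exactly as the corresponding case of \autoref{gen_loc_con} (now using the two sets of edges leaving a component towards the two vertices of the putative separator, and the analogous parity argument). Combining everything, $H$ is $r$-locally $3$-connected.

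The main obstacle is one of care rather than depth: $H$ is a subdivision and hence \emph{not} $3$-connected (it is riddled with degree-two vertices), so \autoref{gen_loc_con} cannot be quoted verbatim, which is exactly why I re-run its proof instead of invoking it. One should in particular be reassured that a degree-two vertex in the interior of a long subdivided edge is not an $r$-local cutvertex of $H$: any cycle through such a vertex uses both of its incident edges and so traverses the entire subdivided edge, whence the parity argument above still applies (and were that subdivided edge longer than $r$, then $H$ would already fail to be $r$-local, contrary to hypothesis). The case $r=\infty$ is the degenerate one, where the local notions collapse to the corresponding global ones and a subdivision of a (highly connected) wheel has only trivial separations, so it causes no extra trouble. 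I expect the $2$-separator step to be the only mildly technical part, just as in \autoref{gen_loc_con}; the rest is bookkeeping.
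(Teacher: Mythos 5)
There is a genuine gap, and it lies in your very first reduction: you claim that proving the lemma for the weighted wheel $W$ ``is the same as'' proving that the subdivision $H$ is $r$-locally $3$-connected, and you then argue about $H$. But $H$ is in general \emph{not} $r$-locally $3$-connected: as soon as some edge of $W$ has length at least $2$, the subdivision contains degree-two vertices, and the two endvertices of a subdivided edge (or two interior vertices of one) form a genuine $2$-separator whose sides are both within the relevant explorer-neighbourhood, since every subdivided edge of an $r$-local subdivision lies on a cycle of length at most $r$ and is therefore short. So $\{x,y\}$ is an $r$-local $2$-separator of $H$, and no parity argument can rule it out, because the separation is real. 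Your own remark about $r=\infty$ makes the failure visible: for $r=\infty$ the local notions are the global ones, $W$ is $3$-connected but $H$ (having degree-two vertices) is not, so the claimed equivalence between the two statements is false. This is also why the paper only ever asserts local $3$-connectivity for \emph{weighted suppressions} of subdivisions (see \autoref{auxi1} and \autoref{auxi2}), never for the subdivisions themselves. Consequently the ``$2$-separator case handled in the same way'' step of your argument would fail, not merely need more care.

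The repair is short and is exactly the paper's proof: apply \autoref{gen_loc_con} to $W$ itself rather than to $H$. The graph $W$ is isomorphic to a wheel, hence $3$-connected, and the length-preserving correspondence between cycles of $H$ and cycles of $W$ that you set up at the start (every cycle of $H$ traverses a subdivided edge entirely or not at all) transfers the hypothesis ``cycles of length at most $r$ generate the cycle space'' from $H$ to the weighted graph $W$. Since \autoref{gen_loc_con} is stated for weighted graphs, it applies verbatim and yields that $W$ is $r$-locally $3$-connected; no re-running of its proof, and no statement about $H$ beyond the cycle correspondence, is needed. In short, you had the right ingredient (the cycle correspondence) but pointed the connectivity argument at the wrong graph.
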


\begin{proof}
 Wheels are $3$-connected. Hence this follows from \autoref{gen_loc_con}. 
\end{proof}

\begin{rem}
 All graphs in this paper are weighted graphs; that is, graph with length assigned to their edges. 
All edge-length are positive integers. For most of our lemmas it does not make a difference that we 
consider weighted graphs instead of usual graphs. In the few 
places, where it matters, we point that out explicitly. It is essential in this paper that we work 
with weighted graphs as the torsos of graph-decompositions are weighted graphs. 

Moreover, all graphs considered in the paper are simple; that is, they do not have loops or 
parallel edges. 
\end{rem}

\begin{lem}\label{bd_diam}
 Any $r$-local subdivision of a wheel has diameter at most $r$. 
\end{lem}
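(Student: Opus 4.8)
The plan is to exploit the very rigid cycle structure of a subdivision $W$ of a wheel. (If $r=\infty$ there is nothing to prove, so assume $r\in\Nbb$.) Write $h$ for the hub (the vertex of degree $n\ge 3$), $v_1,\dots,v_n$ for the branch vertices of degree $3$ in cyclic order, $S_i$ for the path of $W$ joining $h$ to $v_i$ (the $i$-th subdivided spoke), $R_i$ for the path joining $v_i$ to $v_{i+1}$ (indices mod $n$), and $C=R_1\cup\dots\cup R_n$ for the rim cycle. First I would record two structural facts. (i) The cycle space of $W$ has dimension $n$ (subdividing does not change $|E|-|V|$), and the cycles $D_i:=S_i\cup R_i\cup S_{i+1}$ form a basis of it (they are independent since $R_k$ lies in $D_i$ only for $i=k$). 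Moreover every cycle of $W$ is, in these coordinates, the sum of the $D_i$ over a \emph{cyclic interval} of indices: a cycle avoiding $h$ must equal $C$, since $W-h$ is just $C$ with pendant paths attached; a cycle through $h$ uses exactly two spoke-paths $S_a,S_c$ together with one arc of $C$ from $v_a$ to $v_c$, and such a cycle equals the sum of the $D_l$ over the interval of indices spanned by that arc. (ii) Since every internal vertex of a segment $S_i$ or $R_i$ has degree $2$, a cycle meets each such segment either in all of it or in none of it; so for a cycle $Z$ with coordinate set $I$ one has $Z\supseteq R_k\iff k\in I$, and $Z\supseteq S_k\iff |I\cap\{k-1,k\}|$ is odd.

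Next I would feed in the $r$-local hypothesis: the cycles of length at most $r$ span the $n$-dimensional cycle space, hence cannot all lie in one coordinate hyperplane. By (ii) this forces, for every $k$, that $R_k$ lies on some cycle of length at most $r$ and that $S_k$ lies on some cycle of length at most $r$. Two consequences follow. First, a cycle through $S_k$ necessarily passes through $h$, so $S_k$ lies on a cycle $Z$ through $h$ with $\ell(Z)\le r$; the two $h$--$z$ arcs of $Z$ have lengths summing to $\ell(Z)$, so $d(h,z)\le\lfloor r/2\rfloor$ for every $z\in S_k$, and in particular $d(h,v_k)\le\lfloor r/2\rfloor$. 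Second, for each $j$ the segment $R_j$ lies on a short cycle $Z_j$; by (i) either $h\in Z_j$, and then $d(h,z)\le\lfloor r/2\rfloor$ for all $z\in R_j$ exactly as above, or $Z_j=C$, and then $\ell(C)\le r$.

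Now I would split into two cases. If for every $j$ some short cycle through $h$ contains $R_j$, then by the above every vertex of $W$ lies within distance $\lfloor r/2\rfloor$ of $h$, so $\operatorname{diam}(W)\le 2\lfloor r/2\rfloor\le r$. Otherwise there is a $j$ with no short cycle through $h$ containing $R_j$; since $R_j$ still lies on a short cycle and that cycle avoids $h$, by (i) it equals $C$, so $\ell(C)\le r$. In this second case any two vertices lying on $C$ are at distance at most $\lfloor\ell(C)/2\rfloor\le\lfloor r/2\rfloor$; moreover every vertex $x$ off $C$ is either $h$ or an internal vertex of a unique spoke $S_i$, and then $d(x,v_i)\le\lfloor r/2\rfloor$ (for $x=h$, $d(h,v_i)\le\lfloor r/2\rfloor$ for every $i$) and also $d(x,h)\le\lfloor r/2\rfloor$. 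So for arbitrary $x,y$: if both lie on $C$, $d(x,y)\le\lfloor r/2\rfloor$; if $y$ lies on $C$ but $x$ does not, route $x\to v_i\to y$; if neither lies on $C$, route $x\to h\to y$. In every case $d(x,y)\le 2\lfloor r/2\rfloor\le r$.

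The main obstacle is this second case, and more precisely fact (i) about long rim segments: one might hope the hub is always within $\lfloor r/2\rfloor$ of every vertex, but this is false — a long rim segment whose only short cycle is the rim cycle itself makes $h$ genuinely eccentric — so one really does need the separate argument that bounds $\ell(C)$ and routes through a branch vertex $v_i$ rather than through $h$. The only other slightly delicate point is the linear-algebra observation that every edge lies on a cycle of length at most $r$; the rest is bookkeeping with the degree-$2$ structure of subdivided spokes and rim edges.
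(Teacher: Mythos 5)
Your proof is correct and follows essentially the same route as the paper's: split according to whether the rim is short, bound distances to the centre via short cycles through it, and otherwise route two vertices through a rim vertex. The only real difference is that you spell out, via the cycle-space basis of pieces, why every spoke and rim segment lies on a cycle of length at most $r$, a step the paper treats as immediate from the generating hypothesis.
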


\begin{proof}
 We denote the center of the wheel by $c$. The \emph{rim} is the unique cycle of a (subdivision of 
a) wheel that does not contain the vertex $c$. We distinguish two cases.

 {\bf Case 1:} the rim has length more than $r$. By assumption, every vertex is in a cycle of 
length at most $r$. This cycle must contain the center $c$. So every vertex has distance at most 
$r/2$ from the center. So the diameter is at most $r$. 

{\bf Case 2:} the rim has length at most $r$. 
Let $x$ and $y$ be two arbitrary vertices. By assumption they are each contained in a cycle of 
length at most $r$. 
If both these cycles contain the center, then both these vertices have distance at most $r/2$ from 
the center, so distance at most $r$ from another.
Otherwise one of these cycles must be the rim. The other cycle must contain a vertex from the rim. 
So $x$ and $y$ both have distance at most $r/2$ from that vertex, and so distance at most $r$ from 
another.
\end{proof}

We refer to the 
triangles of a wheel containing the center of the wheel as the \emph{pieces} of the 
wheel (these are all the triangles of the wheel except in the case of $K_4$).
 A \emph{piece} 
of a subdivision of a wheel is a cycle obtained by subdividing a piece. 

In our definition of when a subdivision of a wheel is $r$-local, we were flexible about the 
structure of the generating set of cycles. As most cycles in wheels use the 
center, we can -- in fact -- be more explicit as follows.
We say that a subdivision of a wheel is \emph{$r$-bounded} if all its pieces have length at most 
$r$.  Our aim is to prove the following.

\begin{lem}\label{make_bounded}
 Every $r$-local subdivision of a wheel contains a subdivision of a wheel that is $r$-bounded. 
\end{lem}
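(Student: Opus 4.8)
The plan is to keep the centre and the rim of the given $r$-local subdivision of a wheel fixed, and to pass to a sub-wheel by retaining only a carefully chosen cyclic subsequence of the spokes: the merged pieces of such a sub-wheel are again cycles of the original graph, and the whole point will be to choose the subsequence so that all of these merged pieces have length at most $r$.

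\textbf{Step 1: understanding the cycles.} Write $W$ for the given subdivision of a wheel, $c$ for its centre, $R$ for its rim, and $v_1,\dots,v_k$ for the feet of the spokes $S_1,\dots,S_k$ in their cyclic order along $R$. First I would observe that every cycle of $W$ through $c$ uses exactly two edges at $c$, hence consists of two entire spokes $S_i,S_j$ joined by a path of $W-c$; as $W-c$ is the subdivided rim, that path is one of the two arcs of $R$ between $v_i$ and $v_j$. So every cycle of $W$ is either $R$ itself or has the form $S_i\cup A\cup S_j$ with $A$ an arc of $R$, and accordingly the cycles of length at most $r$ are exactly those of this shape of length at most $r$, plus $R$ if $|R|\le r$. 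A useful by-product: each spoke $S_i$ and each segment $a_i$ of $R$ between consecutive feet has length less than $r$, since every one of its edges lies on a short cycle and such a cycle is at least that long.

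\textbf{Step 2: reduction to a problem about arcs.} Next I would take the pieces $P_1,\dots,P_k$ (with $P_i=S_i\cup a_i\cup S_{i+1}$) as a basis of the cycle space, so that $R=P_1+\dots+P_k$, and a cycle $S_i\cup A\cup S_j$ equals $\sum_{\ell\in I}P_\ell$ for the cyclic interval $I$ of indices that $A$ sweeps out; its length is exactly the length of the merged piece obtained from the sub-wheel that keeps only the spokes with feet $v_i$ and $v_j$. Hence ``$W$ is $r$-local'' translates into the statement that the family $\Ical$ of cyclic intervals of $\{1,\dots,k\}$ realised by cycles of length at most $r$ spans $\Fbb_2^{\{1,\dots,k\}}$; and finding an $r$-bounded sub-wheel of the above kind amounts to partitioning $\{1,\dots,k\}$ cyclically into at least three intervals, each belonging to $\Ical$.

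\textbf{Step 3: the combinatorial core, which is the hard part.} What remains is combinatorial: from a spanning family $\Ical$ of cyclic intervals one wants a cyclic partition of $\{1,\dots,k\}$ into at least three members of $\Ical$ (having such a partition, keeping $c$, $R$ and the spokes at the break-points yields a sub-wheel all of whose pieces are short). I would attempt a greedy sweep around the rim: start at one foot and repeatedly jump to the farthest foot reachable along the arc of a short cycle, and argue the process returns to its start. The delicate points — and this is where the real effort lies — are (i) that the sweep closes up at all, and (ii) that it produces at least three pieces rather than a $\theta$-graph; crucially, (ii) must exploit that $\Ical$ is \emph{spanning}, not just that every edge lies on a short cycle, since the latter alone does not force three short pieces. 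If the greedy sweep proves awkward, an alternative is an induction that, whenever some piece is too long, uses a short cycle through the two spokes it meets to reroute and strictly shorten the configuration. Finally, the wheel $K_4$ should be treated separately, since there the centre of a sub-wheel is not forced and one exploits the freedom to let any of the four branch vertices play the role of the centre before running the argument.
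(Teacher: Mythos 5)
Your reduction in Step 2 is too restrictive, and the combinatorial statement you set up in Step 3 is false. You only look for $r$-bounded wheels obtained from $W$ by keeping the centre $c$, the rim $R$ and a subset of the spokes (and indeed these are the only sub-wheels of $W$ centred at $c$, since $R$ is the only cycle of $W$ avoiding $c$) --- but an $r$-local subdivision of a wheel need not contain such a sub-wheel. Concretely, take $r=100$ and a wheel with $50$ spokes, each spoke of length $49$, rim segments $a_1,\dots,a_{49}$ of length $1$ and $a_{50}$ of length $10$. The pieces $P_1,\dots,P_{49}$ have length $99$ and the rim has length $59$, so the cycles of length at most $r$ generate all cycles and this subdivision is $r$-local; yet every cycle through $c$ covering the segment $a_{50}$ has length at least $49+10+49=108>r$. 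Hence in any sub-wheel centred at $c$ the merged piece containing $a_{50}$ is too long; equivalently, your family $\Ical$ (all intervals realised by short cycles, together with the full interval coming from the rim, which you need in order for $\Ical$ to span $\Fbb_2^{50}$) spans but admits no cyclic partition into at least three, or even two, proper intervals. So neither the greedy sweep nor the rerouting induction can succeed, because the statement they are meant to prove is not true; and in any case Step 3 is only a sketch of two candidate strategies, not an argument.

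The missing idea is that the $r$-bounded wheel must in general be centred at a rim vertex of $W$, not only in the case $W=K_4$ that you set aside. This is exactly how the paper proceeds: it first shows, by induction on the wheel (deleting a spoke that is a chord of a short geodesic cycle and rerouting the generating cycles through that cycle), that one may assume either that all pieces are short, or that all pieces but one and the rim are short; in the latter case it takes two adjacent short pieces together with the short rim --- in the example above $P_1\cup P_2\cup R$ --- which is a subdivision of $K_4$, viewed as a $3$-wheel centred at the common foot of the two pieces, with pieces $P_1$, $P_2$ and $R$, all of length at most $r$. Your Steps 1--2 bookkeeping is fine as far as it goes (and sub-wheels with the same centre do suffice when all pieces can be made short), but without allowing this change of centre the approach cannot be completed.
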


First we do some preparation. 
We say that a subdivision of a wheel has an \emph{$r$-explicit} generating set if either all its 
pieces have length at most $r$ or else all its pieces except for one have length at most $r$ 
and the rim (that is, the unique cycle not containing the center) has length at most $r$.
\begin{eg}
 Every subdivision of a wheel with an $r$-explicit generating set is $r$-local. 
\end{eg}

\begin{lem}\label{bounded2}
 Every subdivision of a wheel with an $r$-explicit generating set contains a subdivision of a wheel 
that is $r$-bounded. 
\end{lem}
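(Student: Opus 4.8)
The plan is to dispose of a trivial case and then reduce the real case to exhibiting one cleverly re-rooted copy of $K_4 = W_3$ inside the given subdivision $H$ of a wheel. First, if every piece of $H$ has length at most $r$, then $H$ is itself $r$-bounded and we are done. Otherwise, by the definition of an $r$-explicit generating set, exactly one piece of $H$ has length greater than $r$ and the rim $R$ has length at most $r$. Fix notation: let $c$ be the centre, let $v_1,\dots,v_n$ (with $n\ge 3$) be the branch vertices on $R$ in cyclic order, let $S_j$ be the spoke from $c$ to $v_j$, let $R_j$ be the rim arc from $v_j$ to $v_{j+1}$, and let $P_j=S_j\cup R_j\cup S_{j+1}$ be the piece between $v_j$ and $v_{j+1}$ (indices mod $n$). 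After a cyclic relabelling I may assume the long piece is $P_n$, so that $P_1,\dots,P_{n-1}$ all have length at most $r$.

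The key move is to stop using $c$ as the centre. Let $R' = R_3\cup R_4\cup\dots\cup R_n$ be the path along the rim from $v_3$ to $v_1$ that goes ``the long way round'', so that $R'$ avoids $v_2$ and $R_1\cup R_2\cup R' = R$, and set
\[
D := S_1\cup S_2\cup S_3\cup R_1\cup R_2\cup R'.
\]
These six paths are pairwise edge-disjoint and internally disjoint and join $c,v_1,v_2,v_3$ in the pattern of $K_4$ ($S_j$ from $c$ to $v_j$, $R_1$ from $v_1$ to $v_2$, $R_2$ from $v_2$ to $v_3$, $R'$ from $v_1$ to $v_3$), so $D\subseteq H$ is a subdivision of $W_3$; declare $v_2$ to be its centre. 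The three pieces of $D$, i.e.\ the three cycles of $D$ through $v_2$, are then $R_1\cup S_1\cup S_2$, $R_2\cup S_3\cup S_2$, and $R_1\cup R_2\cup R'$. The first has the same edge set as $P_1$ and the second has the same edge set as $P_2$, so both have length at most $r$; this is exactly where $n\ge 3$ enters, since it guarantees that both $P_1$ and $P_2$ lie among the short pieces $P_1,\dots,P_{n-1}$. The third piece equals the whole rim $R$, of length at most $r$ by hypothesis. Hence every piece of $D$ has length at most $r$, i.e.\ $D$ is an $r$-bounded subdivision of a wheel contained in $H$, as required.

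I do not expect a genuine obstacle; the content is really just the observation above, and what remains is routine bookkeeping. One must check that the six paths forming $D$ really are internally disjoint and realise $K_4$ — in particular that $R'$ contains neither $v_2$ nor $c$ — and one should note that $n=3$ is not special: there $R'$ degenerates to the single arc $R_3$ from $v_3$ to $v_1$ and $D$ is all of $H$, merely re-rooted at $v_2$. It is worth recording why the naive alternative fails: if one insists on keeping $c$ as the centre and tries to delete or merge the spokes bounding the long piece in order to get a wheel with fewer spokes, the resulting ``closing'' piece contains two spokes together with a rim arc, and the best general bound on its length is of order $3r$ rather than $r$. Re-rooting at $v_2$ sidesteps this precisely because each new piece is bounded by two spokes that already, together with a single rim arc, bounded an old short piece.
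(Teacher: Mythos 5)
Your proof is correct and matches the paper's argument: the paper likewise takes the union of the rim with two adjacent short pieces (your $R\cup S_1\cup S_2\cup S_3$) and views the resulting $K_4$-subdivision as a $3$-wheel centred at the unique common vertex of the rim and the two pieces (your $v_2$), so that its pieces are the two old short pieces and the old rim. Your explicit labelling and the remark about why re-rooting is needed are just a more detailed write-up of the same construction.
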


\begin{proof}
 It suffices to show that every subdivision of a wheel where all but one of the pieces have length 
at most $r$ and the rim has length at most $r$, contains an $r$-bounded subdivision of a wheel. 
In fact we will find an $r$-bounded subdivision of the 3-wheel $K_4$ as follows. 
By assumption there are two adjacent pieces -- that is, two pieces sharing at least one edge -- of 
length at most $r$. Each of these pieces shares at least one edge with the rim.
Let $H$ be the union of the rim with these two adjacent pieces. As not all edges of the rim are in 
these two adjacent pieces, the graph $H$ is a subdivision of $K_4$. There is a unique vertex in the 
intersection of the old rim and the two old adjacent pieces. We see this graph $H=K_4$ as a 
$3$-wheel with that vertex as the center. Then the pieces are precisely the two adjacent old pieces 
and the old rim. So the graph $H$ is an $r$-bounded subdivision of a $3$-wheel. 
\end{proof}

\begin{lem}\label{make_explicit}
 Every $r$-local subdivision of a wheel contains a subdivision of a wheel that has an 
$r$-explicit generating set. 
\end{lem}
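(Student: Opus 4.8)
The plan is to read off the cycle space of the given $r$-local subdivision of a wheel $W$ and then delete spokes (and, in a degenerate case, re-choose the centre) so as to reach a sub-wheel whose pieces are almost all short. Fix the centre $c$, the rim $R$, and the spokes $S_1,\dots,S_n$ in cyclic order with feet $v_1,\dots,v_n\in R$; write $A_i$ for the sub-arc of $R$ from $v_i$ to $v_{i+1}$ and $P_i:=S_i\cup A_i\cup S_{i+1}$ for the $i$-th piece. First I would record the (elementary) structure: $P_1,\dots,P_n$ is a basis of the cycle space, $R=P_1+\dots+P_n$, and every cycle of $W$ is either $R$ or a consecutive sum $P_i+\dots+P_j$, realised as the cycle $S_i\cup A_i\cup\dots\cup A_j\cup S_{j+1}$ through $c$. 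Hence deleting spokes from $W$ yields a sub-wheel (again with centre $c$ and rim $R$) whose pieces are consecutive sums partitioning $\{P_1,\dots,P_n\}$, so it suffices to partition the pieces into at least three cyclic blocks, all but at most one of which is a cycle of length at most $r$: then, if $|R|\le r$, these blocks together with $R$ form an $r$-explicit generating set, while if every block is short the blocks alone do. The case $n=3$ (where $W$ is a subdivided $K_4$) I would treat directly, re-choosing the centre among the four branch vertices --- $r$-locality forces at least three of the four triangles of $W$ to be short, and a vertex common to three short triangles is a suitable centre.

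The cornerstone is that the \emph{spoke graph} $H$ --- on vertex set $\{1,\dots,n\}$, with an edge $ab$ whenever some cycle of length at most $r$ through $c$ uses exactly the spokes $S_a,S_b$ --- is connected. For this I would use the boundary map $\partial$ from the cycle space to $\mathbb F_2^{\{1,\dots,n\}}$ sending $P_i\mapsto e_{S_i}+e_{S_{i+1}}$: its kernel is $\langle R\rangle$, its image is the even-weight subspace, and it sends a consecutive sum on spokes $S_a,S_b$ to $e_{S_a}+e_{S_b}$. Since $W$ is $r$-local the short cycles span the cycle space, so their $\partial$-images span $\operatorname{im}\partial$, of dimension $n-1$; but these images are exactly the edge vectors of $H$ (with $0$), which span a space of dimension $n$ minus the number of components of $H$ --- so $H$ is connected. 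I would then build the desired partition by a greedy walk: orient each short through-$c$ cycle as a step from one of its spokes clockwise to the other along its arc, start at some spoke, repeatedly jump to the clockwise-farthest spoke reachable in one short step, and stop the first time the walk reaches or passes its start. If $|R|\le r$ this suffices: the blocks cut out are short except possibly the last (needed only if the walk overshot), and with the short rim they give an $r$-explicit generating set once one checks at least three spokes are used. If $|R|>r$, the rim of any sub-wheel with centre $c$ is still long, so every block must be short; here I would use that every rim edge lies on a short cycle (as $W$ has no bridge and is $r$-local) --- necessarily one through $c$ since $R$ is long --- so the arcs of short through-$c$ cycles cover the circle, and then argue that the greedy walk closes up exactly (no long block is needed), using the identity that the blocks sum to $R$; a subdivided $K_4$ re-centred at a rim vertex, as in the proof of \autoref{bounded2}, would serve as a fallback if the sub-wheel produced had too few spokes.

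The hard part will be turning connectivity of $H$ (respectively, a cover of the circle by short arcs) into an honest near-partition of the pieces. Connectivity of $H$ only says one can get around the circle by a \emph{walk}, not that one can tile it by short arcs with a single exception, and a priori the greedy walk can stall at a spoke admitting no clockwise short step, or can close up after fewer than three spokes. Making the extraction rigorous --- showing the walk never stalls before completing one turn, that it uses at least three spokes, and, when $|R|>r$, that no genuinely long block is required --- is the technical heart; it is precisely here that the \emph{cyclic} structure of the wheel matters, since the double-counting identity ``the blocks sum to $R$'' is what excludes the pathological overlap patterns that a path-like analogue of the problem would permit.
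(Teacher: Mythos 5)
Your preparation is fine (the description of the cycle space of a subdivided wheel, and the connectivity of the spoke graph $H$ via the boundary map, which is the same kind of auxiliary-graph parity argument the paper uses elsewhere), but it is only preparation. The actual content of the lemma is exactly the step you defer: turning the fact that short through-centre cycles span the cycle space into a \emph{cyclic near-partition} of the rim into arcs, each arc being the arc of a single short through-$c$ cycle whose two boundary spokes are the retained spokes, with at most one exceptional block (and then only when the rim is short), and with at least three blocks. You name this "the technical heart" and do not supply it, so as it stands the proposal does not prove the lemma. Moreover, the greedy walk as described genuinely can fail, not just resist easy analysis: suppose the rim is long and the walk has been forced to retain a spoke $S_a$ (as the clockwise endpoint of the previous block's cycle). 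The rim edge just clockwise of $v_a$ lies on some short through-$c$ cycle, but that cycle may use spokes $S_p,S_q$ with $p$ strictly before $a$; trimming its arc at $v_a$ gives the candidate piece $S_a\cup\mathrm{arc}(v_a,v_q)\cup S_q$, whose length is bounded only by $r+\ell(S_a)-\ell(S_p)$. Since spokes are subdivided paths of arbitrary length, this can exceed $r$, so overlapping short arcs cannot in general be trimmed to short pieces and the walk can stall at a spoke it was forced to keep. The "blocks sum to $R$" identity does not by itself exclude this; some genuinely new idea is needed at exactly this point.

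The paper avoids the global tiling problem altogether by an induction that deletes one spoke at a time. After suppressing degree-two vertices one works with a weighted wheel $W$. If every geodesic cycle of length at most $r$ is a piece or the rim, then (since every short cycle is generated by short geodesic cycles) the pieces and the rim already generate everything, which forces an $r$-explicit generating set for $W$ itself. Otherwise there is a short \emph{geodesic} cycle $o$ through the centre that is not a piece, hence has a chord $x$ (a spoke whose foot lies inside its arc); delete $x$ and replace $x$, in every cycle of the generating set, by the shorter $x$-endvertex-to-endvertex path inside $o$. Geodesicity of $o$ is what guarantees lengths do not increase, so the smaller wheel is again $r$-local and induction applies; the base case $K_4$ is handled directly (and re-centering, as in \autoref{bounded2}, only enters there). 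If you want to salvage your approach, the most promising repair is exactly this local move: rather than selecting all retained spokes at once by a greedy sweep, remove a single spoke guided by a short geodesic non-piece cycle and recurse, which sidesteps the stalling and overlap issues that block your one-shot extraction.
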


\begin{proof}
By suppressing vertices of degree two if necessary, it suffices 
to prove the lemma for $r$-weighted wheels. Let $W$ be an $r$-weighted wheel. 
We prove this lemma by induction on $W$. If the wheel $W$ is the graph $K_4$, all its cycles are 
pieces 
or the rim. Hence the claim is trivially true.
Now assume that $W$ is a wheel with at least four vertices on the rim. 

{\bf Case 1:} all geodesic cycles of $W$ of length at most 
$r$ are pieces or the rim. As every cycle in the generating set has length at most $r$, it is 
generated by geodesic cycles of length at most $r$. 
Hence the pieces and the rim together generate all cycles of $W$. So either all pieces of 
$W$ must have length at most $r$, or else the rim has length at most $r$ and all but at most 
one piece has length at most $r$. Thus the wheel $W$ has an 
$r$-explicit generating set.

{\bf Case 2:} not Case 1; that is, there is a geodesic cycle $o$ of $W$ of length at most 
$r$ that is not the rim or a piece. As the rim is the only cycle that does not contain the 
center of the wheel $W$, the cycle $o$ must contain the center. 
As the cycle $o$ is not a piece, it must contain at least one chord, denote it by $x$. 
Let $W'$ be the subgraph of 
$W$ obtained by deleting the chord $x$ of the cycle $o$. As the wheel $W$ is not the $3$-wheel 
$K_4$, the subgraph $W'$ is a subdivision of a wheel. Let $\Ccal$ be a set of cycles of length at 
most $r$ 
generating all cycles of $W$. 
Let $P$ be a shortest path between the two endvertices of the chord $x$ included in the geodesic 
cycle $o$. 
We obtain $\Ccal'$ from $\Ccal$ by replacing 
in each element of  $\Ccal$ the chord $x$ by the subpath $P$ of $o$. The set $\Ccal'$ is a set of 
closed walks of the graph $W'$ of length at most $r$ that generates all its cycles. Hence the 
subdivision $W'$ is $r$-local. By induction, the graph $W'$ has a subdivision with an 
$r$-explicit generating set. This completes the proof. 
\end{proof}

\begin{proof}[Proof of \autoref{make_bounded}]
 Combine  \autoref{bounded2} and \autoref{make_explicit}.
\end{proof}

\section{Reduction to the locally 3-connected case}

A graph $H$ is called an \emph{$r$-local cut-subgraph} of a graph $G$ if $H$ is obtained from 
$G$ by successively deleting edges and vertices and cutting at $r$-local 1-separators and $r$-local 
2-separators. Here we follow the convention that after cutting at a local 2-separator, we 
immediately replace the torso edges by paths of the same length. 

\begin{eg}\label{eg_careful}
 A cycle of length $\ell\geq r+1$ has a path with the same number of edges as a cut-subgraph but 
not as a subgraph.

If $H$ is a disconnected cut-subgraph of a graph $G$, two of its edges can be copies of the same 
edge of $G$, one of 
which lying on a replacement path for a torso edge. 
\end{eg}

In contrast to \autoref{eg_careful}, if the smaller graph $H$ is sufficiently connected and 
not too big, the cut-subgraph relation is just the usual subgraph relation, as shown in 
the following technical lemma.  

\begin{lem}\label{cut_sub}
 Assume a graph $H$ is an $r$-local cut-subgraph of a graph $G$.
 If the graph $H$ has diameter at most $r$, then it is a subgraph of $G$. 
\end{lem}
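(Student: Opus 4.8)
The plan is to follow the canonical \emph{projection} that undoes all the cutting, and to show that the diameter bound forces this projection to be injective; its image is then the desired subgraph of $G$. First I would write the construction of $H$ as a sequence $G=H_0,H_1,\dots,H_k=H$, where each $H_i$ arises from $H_{i-1}$ by one step: deleting an edge, deleting a vertex, or cutting at an $r$-local $1$- or $2$-separator $S$ with the resulting torso edges immediately replaced by paths of the same length. To each step I attach a map $\phi_i\colon H_i\to H_{i-1}$: the inclusion for a deletion, and for a cut the map that identifies the copies of each vertex of $S$, carries each replacement path (these occur only at $2$-separator cuts) onto a fixed geodesic of $H_{i-1}$ between the images of the two vertices of $S$, and is the identity otherwise. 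Let $\phi=\phi_1\circ\cdots\circ\phi_k\colon H\to G$. By construction $\phi$ is a homomorphism of weighted graphs that sends every edge to an edge of the same length, and a replacement path of length $\ell$ to a path of length $\ell$, so $\phi$ is $1$-Lipschitz for the graph metrics. Since all graphs here are simple, once $\phi$ is injective on $V(H)$ the image $\phi(H)$ is a subgraph of $G$ isomorphic to $H$; so everything reduces to showing that $\phi$ is injective on vertices (which will also use that $H$ is connected, automatic when $r<\infty$).

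To get this I would prove, and then iterate, the single-step claim: \emph{if $F'$ arises from $F$ by one step with projection $\phi'\colon F'\to F$, and $J\se F'$ is connected with $\operatorname{diam}(J)\le r$, then $\phi'\restric J$ is an isomorphism onto a subgraph of $F$.} Granting this, apply it with $J=H$ to the last step: $\phi_k(H)\se H_{k-1}$ is isomorphic to $H$, hence connected of diameter at most $r$; feed this into the claim for the previous step, and continue down to $H_0=G$, obtaining that $\phi(H)\se G$ is isomorphic to $H$. (Deletion steps are trivial, $\phi_i$ being an inclusion.) The single-step claim in turn would follow from the distance estimate
\[
a\neq b \text{ in } V(F') \text{ with } \phi'(a)=\phi'(b)\ \Longrightarrow\ d_{F'}(a,b)>r ,
\]
because a failure of injectivity of $\phi'\restric J$ on vertices would produce such a pair inside $J$ with $d_{F'}(a,b)\le d_J(a,b)\le\operatorname{diam}(J)\le r$, a contradiction; and vertex-injectivity upgrades to an isomorphism onto a subgraph since the graphs are simple and $\phi'$ respects edges and lengths.

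The heart of the proof, and the step I expect to be the main obstacle, is this distance estimate for a single cut. Take a cut at an $r$-local $1$-separator $\{u\}$. Then a pair $a,b$ with $\phi'(a)=\phi'(b)$ consists of two copies $u_i\neq u_j$ of $u$ on different sides of the split; if these sides lie in different components of $F'$ then $d_{F'}(u_i,u_j)=\infty$, so suppose they lie in a common component and argue by contradiction from a geodesic $Q$ in $F'$ from $u_i$ to $u_j$ of length at most $r$. The relevant property of an $r$-local cutvertex (to be taken from \cite{loc2sepr}) is that $B^F_{r/2}(u)-u$ is disconnected and each of its components lies in a single side of the split, so no walk in $B^F_{r/2}(u)-u$ runs from side $i$ to side $j$. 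Travelling along $Q$ from $u_i$, let $w$ be the first vertex that is either a copy of $u$ other than $u_i$, or satisfies $d_F(\phi'(w),u)>r/2$. Such a $w$ exists, for otherwise $\phi'(Q)$ would be a closed walk in $B^F_{r/2}(u)$ meeting $u$ only at its ends, and deleting those ends would leave a walk in $B^F_{r/2}(u)-u$ from side $i$ to side $j$. Now $w$ lies on the geodesic $Q$ of length at most $r$, so one of $d_{F'}(u_i,w)$ and $d_{F'}(w,u_j)$ is at most $r/2$; since $\phi'$ is $1$-Lipschitz and $\phi'(u_i)=\phi'(u_j)=u$, this gives $d_F(\phi'(w),u)\le r/2$, and hence $w$ is a copy $u_\ell$ of $u$ with $\ell\neq i$. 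But then the segment of $Q$ from $u_i$ to $u_\ell$ has all its internal vertices in $B^F_{r/2}(u)-u$ and projects to a walk there from side $i$ to side $\ell\neq i$, which is impossible. Therefore $d_{F'}(u_i,u_j)>r$.

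The $2$-separator case $S=\{u,v\}$ runs the same way relative to $B^F_{r/2}(u)\cup B^F_{r/2}(v)$, whose punctured version separates the sides of the split. The one new feature is that an identified pair $a,b$ may now also involve internal vertices of replacement paths; but such vertices are never copies of $u$ or $v$ (a geodesic meets its endpoints only once), and an internal replacement-path vertex can only reach a different piece by first passing through one of the path's two endpoints, a copy of $u$ or of $v$. Hence any short path in $F'$ witnessing $d_{F'}(a,b)\le r$ still has to leave $B^F_{r/2}(u)\cup B^F_{r/2}(v)$ in order to cross between the two sides, and the $1$-Lipschitz estimate applies as before. The genuinely delicate points to nail down are the precise separation properties of local $1$- and $2$-separators and of their side-partitions, together with how replacement paths sit inside their pieces; all of these I would extract from the development in \cite{loc2sepr}.
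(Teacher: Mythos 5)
Your overall strategy coincides with the paper's: reduce to a single cutting step, use the fact that distinct slices of a vertex are far apart to force injectivity of the un-cutting projection on the small-diameter graph $H$, and control replacement-path interiors via their attachment through the slices. The genuine gap sits exactly where the paper spends its effort, namely on the replacement paths. You let the projection carry each replacement path onto ``a fixed geodesic between the images of the two vertices of $S$''. The convention in the paper is different and is essential: the replacement path is a clone of a \emph{specific} path $P$ of $G$ corresponding to the torso edge, and the projection must send it onto that path, edge by edge. With an arbitrary geodesic, edges of the replacement path need not map to edges of $G$ of the same length (the geodesic's subdivision pattern can differ), so even an injective $\phi$ need not exhibit $H$ as a subgraph; worse, your single-step distance estimate can fail outright, because after a \emph{local} cut the parts need not be separate components, the chosen geodesic may pass through the very part carrying the replacement path, and then a vertex of that part and the vertex of the replacement path with the same image can be at distance well below $r$ in $F'$. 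Your remark that an interior replacement-path vertex ``can only reach a different piece through an endpoint'' does not address this collision, which occurs inside a single piece. The paper's \autoref{not_bad} is precisely the missing device: because the projection is pinned to the corresponding path $P$, a short path between a vertex and its clone can be rerouted -- swapping the segment on $P'$ for the cloned segment of $P$ -- into a walk of length at most $r$ between two \emph{distinct slices} of $x$ or $y$, contradicting the companion paper's bound that slices never have distance at most $r$. Nothing in your proposal performs this swap, and with your choice of projection it cannot be performed.

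A second, lesser issue: the far-apartness of slices is not something to re-derive. The paper quotes it from \cite{loc2sepr} (the result it cites as ``cut\_far''). Your derivation for the cutvertex case is plausible, but for $2$-separators you argue relative to the punctured set $B_{r/2}(u)\cup B_{r/2}(v)$ separating the sides; that is not the object governing local $2$-separators in \cite{loc2sepr} -- the relevant neighbourhood is the explorer neighbourhood $\expl(u,v)$ used throughout this paper -- and the separation property you assume is exactly the delicate point you defer. Citing the companion result repairs and shortens that part; the replacement-path issue above, however, requires the clone convention and an argument along the lines of \autoref{not_bad} in any case.
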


\begin{proof}
We prove this by induction on the number of operations required to obtained $H$ from $G$. 
Let $G'$ be the graph obtained from $G$ by performing the first operation. By induction, $H$ is a 
subgraph of $G'$. If $G'$ is a subgraph of $G$ we are fine. Hence we are left with two cases.

{\bf Case 1:} $G'$ is obtained from $G$ by locally cutting a local cutvertex $x$. By the definition 
of 
$r$-local cutting, two slices of the same vertex have distance more than $r$. As the graph $H$ has 
diameter at most $r$, at 
most one slice of the vertex $x$ is in the graph $H$. 
If existent, denote such a slice of $x$ in $H$ by $x'$. Let $G''$ be the subgraph of $G'$ obtained 
by deleting all slices of $x$ different from $x'$ -- and if no slice of $x$ is in $H$, we delete 
all slices of $x$. 
By construction, the graph $G''$ has the graph $H$ as a subgraph. 
The graph $G''$ is equal to the subgraph of $G$ obtained 
by deleting all edges incident with $x$ that are not incident with $x'$.  Thus $H$ is a subgraph of 
the graph $G$. 

{\bf Case 2:} $G'$ is obtained from $G$ by $r$-locally cutting at an $r$-local separator 
$\{x,y\}$.  By {\cite[\autoref*{cut_far}]{{loc2sepr}}}, no two slices of the same vertex of the 
local separator $\{x,y\}$ 
are in the graph $H$.
If one of the vertices $x$ or $y$ has no slice in the graph $H$, we treat this case as Case 1. 
Hence we may assume, and we do assume, that both vertices $x$ and $y$ have slices in the graph 
$H$. Denote these slices by $x'$ and $y'$, respectively.
If these slices come from different components of $\expl(x,y)-x-y$, we treat each slice 
separately as in Case 1.
Hence we may assume, and we do assume, that both slices $x'$ and $y'$ come from the same 
component of $\expl(x,y)-x-y$. Denote that component by $K$. 

Let $P$ be a path of the graph $G$ corresponding to the torso edge $x'y'$ of the graph $G'$.
We denote the path of $G'$ by which the torso edge $x'y'$ is replaced by $P'$. 

If the graph $H$ does not contain any interior vertex of the path $P'$, we treat each slice 
separately as in Case 1. Hence we may assume, and we do assume, that the graph $H$ contains 
an interior vertex of the path $P'$.

\begin{sublem}\label{not_bad}
There does not exist a vertex $z$ of $G$ that has two copies in the graph $H$.
\end{sublem}

\begin{proof}
Suppose for a contradiction there is such a vertex $z$. Then $z$ must have a copy in the path $P'$ 
and a copy in $P$. As the graph $H$ has diameter at most $r$, there is a path $Q$ of length at 
most $r$ between these two copies of $z$ in the graph $H$, and so in the supergraph $G'$. As no 
interior vertex of the path $P'$ has a neighbour outside the path $P'$, the path $Q$ must contain 
one of the endvertices of the path $P'$. Let $v$ be such an endvertex with $Qv\se P'$. Denote by 
$R$ the subpath 
of the path $P$ from which the subpath $Qv$ of $P'$ is cloned from. Then $vQR$ is a walk between 
two different slices of the vertex from which $v$ is cloned from. This path has length at most $r$. 
As two 
slices never have distance at most $r$ by {\cite[\autoref*{cut_far}]{{loc2sepr}}}, we derive at a 
contradiction. So there cannot be a vertex $z$ of $G$ that has two copies in the graph $H$. 
\end{proof}

We obtain the graph $G''$ from the graph $G'$  by deleting all slices of the vertices $x$ and 
$y$ except for $x'$ and $y'$, deleting all torso-edge replacement paths between slices different 
from $x'$ and $y'$ -- and by deleting all vertices of the paths $P$ and $P'$ that are not in the 
graph $H$. 
By construction, the graph $G''$ has the 
graph $H$ as a subgraph.

\begin{sublem}\label{is_sub}
 $G''$ is a subgraph of $G$.
\end{sublem}

\begin{proof}
Label an interior vertex of the path $P$ of $G$ by $P'$ if it has a clone in the graph $H$ that is 
on the path $P'$.
Label it by $P$ if it has a clone in the graph $H$ that is on the path $P$. Otherwise do not give 
it any label. 
We claim that the graph $G''$ is a 
subgraph of the graph $G$.
To see that delete from $G$ all edges incident with $x$ or $y$ that are 
not incident with $x'$ or $y'$, 
delete interior vertices of $P$ without a label, for vertices with label $P'$ delete all its 
incident edges that go to a vertex outside $P'$. By \autoref{not_bad} the resulting subgraph of $G$ 
is equal to $G''$. 
\end{proof}
  
By \autoref{is_sub}, the graph $G$ is a supergraph of the graph $G''$, which in turn is a 
supergraph of $H$.
\end{proof}

\begin{cor}\label{subgraph_for_free}
If a graph $G$ contains a graph $H$ that is an $r$-local subdivision of a wheel as a 
cut-subgraph, then $H$ is a subgraph of $G$.
\end{cor}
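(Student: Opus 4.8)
The plan is to deduce \autoref{subgraph_for_free} from \autoref{cut_sub} by verifying that an $r$-local subdivision of a wheel has diameter at most $r$, since \autoref{cut_sub} says precisely that any $r$-local cut-subgraph of diameter at most $r$ is in fact an honest subgraph.

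First I would recall that $H$ is, by hypothesis, an $r$-local cut-subgraph of $G$, so the only missing ingredient for applying \autoref{cut_sub} is the diameter bound on $H$. But that is exactly the content of \autoref{bd_diam}: any $r$-local subdivision of a wheel has diameter at most $r$. So the proof is a two-line invocation: apply \autoref{bd_diam} to see that $H$ has diameter at most $r$, then apply \autoref{cut_sub} with this $H$ and $G$ to conclude that $H$ is a subgraph of $G$.

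There is essentially no obstacle here; the corollary is a direct combination of the two preceding results, and the only thing to double-check is that the notion of ``$r$-local subdivision of a wheel'' used in \autoref{subgraph_for_free} is literally the one for which \autoref{bd_diam} was proved (namely, a subdivision of a wheel whose cycles of length at most $r$ generate all its cycles), which it is. Thus the written proof would simply read: by \autoref{bd_diam} the graph $H$ has diameter at most $r$; hence by \autoref{cut_sub} it is a subgraph of $G$.

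If one wanted to be slightly more careful, the only subtlety worth a sentence is that \autoref{cut_sub} requires $H$ to be an $r$-local cut-subgraph in the precise technical sense defined at the start of \autoref{sec2xz} (obtained by deleting edges and vertices and cutting at $r$-local $1$- and $2$-separators, with torso edges immediately replaced by paths of the same length); since this is assumed verbatim in the statement of \autoref{subgraph_for_free}, no extra work is needed.
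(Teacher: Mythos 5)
Your proposal matches the paper's own proof exactly: it cites \autoref{bd_diam} to bound the diameter of $H$ by $r$ and then applies \autoref{cut_sub} to conclude that $H$ is a subgraph of $G$. The argument is correct and no further comment is needed.
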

\begin{proof}
 By \autoref{bd_diam} $r$-local subdivisions of  wheels have diameter at most $r$. So the 
corollary follows from \autoref{cut_sub}. 
\end{proof}

\begin{thm}\label{series-para}
 Let $G$ be a graph and $r\in \Nbb\cup\{\infty\}$ be a parameter. Then precisely one of the 
following holds.
\begin{enumerate}
 \item $G$ has an $r$-bounded subdivision of a wheel;
 \item $G$ has a graph-decomposition of locality $r$ and adhesion at most two such that all torsos 
are cycles or single edges. 
\end{enumerate}
\end{thm}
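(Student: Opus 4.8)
The plan is to prove \autoref{series-para} by induction, using the local 2-separator structure theory from \cite{loc2sepr} as the engine. First I would observe that the two alternatives are mutually exclusive: if $G$ had both an $r$-bounded subdivision of a wheel and a graph-decomposition of locality $r$ whose torsos are all cycles or single edges, then — since an $r$-bounded subdivision of a wheel is $r$-locally $3$-connected by \autoref{are-3-con}, has diameter at most $r$ by \autoref{bd_diam}, and is a minor (indeed, essentially a cut-subgraph) concentrated in one part of the decomposition — its highly connected core would have to embed into a single torso, contradicting that torsos are cycles or edges (which contain no subdivision of $K_4$ at all, let alone of a larger wheel). This uses \autoref{subgraph_for_free} to pass between the cut-subgraph relation and the genuine subgraph relation.

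For the main direction, I would argue the contrapositive: if $G$ has no $r$-bounded subdivision of a wheel, I build a graph-decomposition of locality $r$ and adhesion at most two with torsos cycles or single edges. Start with the trivial one-part decomposition and refine it. If every torso is already $r$-locally $2$-connected with no $r$-local $2$-separator — i.e. is $r$-locally $3$-connected or too small — then by the local $2$-separator theorem of \cite{loc2sepr} such torsos are essentially cycles or small graphs; the place where this could go wrong is an $r$-locally $3$-connected torso on at least four vertices, and here I would invoke (a yet-to-be-proved companion of) the statement that an $r$-locally $3$-connected weighted graph contains an $r$-local subdivision of a wheel, hence by \autoref{make_bounded} an $r$-bounded one, hence by \autoref{subgraph_for_free} an actual $r$-bounded subdivision of a wheel in $G$ — contradiction. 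Otherwise some torso has an $r$-local cutvertex or $r$-local $2$-separator; cut there, replace torso edges by paths of the right length (as per the stated convention), and recurse on the strictly simpler pieces, assembling the resulting decompositions into a single graph-decomposition of $G$ of locality $r$ and adhesion at most two.

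The structural bookkeeping — checking that cutting at $r$-local $1$- and $2$-separators and gluing the sub-decompositions really yields a graph-decomposition of locality $r$ (not just of adhesion two) of the whole graph $G$ — is routine given the apparatus of \cite{loc2sepr}, but it is the most delicate part to write carefully, because locality is a global condition on how far apart the slices of a cut vertex lie, and one must verify it is preserved under iterated cutting. The genuine mathematical obstacle, which I expect the paper to address as its core lemma, is the implication \emph{$r$-locally $3$-connected on $\geq 4$ vertices $\Rightarrow$ contains an $r$-local subdivision of a wheel}: this is the local analogue of the classical fact that a $3$-connected graph with at least five vertices contains a $K_4$-subdivision (or rather a wheel-subdivision), and making the ``bounded'' aspect work — ensuring the generating short cycles assemble into the pieces of an honest subdivided wheel — is where the real work lies. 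Finally, \autoref{intro_series-para} follows from \autoref{series-para} by noting that a graph-decomposition whose torsos are cycles or single edges has width at most two, and conversely one of width at most two can be refined to have such torsos; and that by \autoref{make_bounded} an $r$-bounded subdivision of a wheel exists iff an $r$-local one does, which aligns the two formulations.
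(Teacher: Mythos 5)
Your reduction is essentially the one the paper uses for \autoref{series-para}: decompose $G$ so that every torso is a cycle, a single edge, or $r$-locally $3$-connected; in the last case produce an $r$-local subdivision of a wheel inside that torso, transfer it to $G$ as a genuine subgraph via the bounded-diameter/cut-subgraph machinery (\autoref{bd_diam}, \autoref{cut_sub}, \autoref{subgraph_for_free}), make it $r$-bounded by \autoref{make_bounded}, and get mutual exclusivity from \autoref{are-3-con}. Two caveats, one structural and one substantive. Structurally, the paper does not run your inductive refinement: it takes the $r$-local block-cutvertex decomposition and then the $r$-local $2$-separator decomposition of each non-edge block, both quoted wholesale from \cite{loc2sepr}, and glues them. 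Your plan to ``cut at some local $1$- or $2$-separator, recurse on strictly simpler pieces, and check locality at the end'' is not routine bookkeeping: termination is unclear (torso edges are replaced by paths, so the pieces are not obviously simpler), the chosen separators may interact, and preservation of locality under iterated cutting is exactly the nontrivial content of the quoted structure theorems. The correct move, and the paper's, is to invoke those theorems directly rather than re-derive them; similarly, your exclusivity sketch (``the wheel embeds into a single torso'') is in the right spirit but vaguer than the paper's appeal to \autoref{are-3-con} via the local $3$-connectedness of $r$-weighted wheels.

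Substantively, your proposal proves only the reduction: the implication that an $r$-locally $3$-connected graph contains an $r$-local subdivision of a wheel (\autoref{3con_to_wheel}) is left as a black box, and that is where essentially all of the paper's work lies (the geodesic-cycle/triangular dichotomy, fans and pre-fans, theta-graphs and the classes $\Wcal$ and $\Wcal^*$). Modulo that lemma your derivation of \autoref{series-para} matches the paper's; as a self-contained proof of the theorem it is incomplete until that lemma is established.
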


\begin{proof}[Proof that \autoref{series-para} implies \autoref{intro_series-para}]
 The only difference between these two theorems is the formulation of the second condition. 
Clearly, any graph-decomposition as in condition 2 of \autoref{series-para} can be refined to yield 
a graph-decomposition as in \autoref{intro_series-para}.
\end{proof}

In the next section we prove the following.

\begin{lem}\label{3con_to_wheel}
 Every $r$-locally $3$-connected graph contains an $r$-local subdivision of a wheel. 
\end{lem}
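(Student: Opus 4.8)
The plan is to take an $r$-locally $3$-connected graph $G$ and build inside it an $r$-local subdivision of a wheel, directly witnessing the generating set of short cycles. I would start from a short cycle: since $G$ is $r$-locally $2$-connected, it has a cycle $C$ of length at most $r$; among all cycles of length at most $r$ in $G$ I would pick one, call it the \emph{rim}, maximising some suitable quantity (for instance the number of vertices it encloses, or minimising length subject to not being contractible through a ball — the exact choice will be dictated by what keeps the pieces short). Then, using that $G$ has no $r$-local $2$-separator, I would argue that no small separator can ``cut off'' the inside of $C$ from the outside, so there must be a vertex $c$ (the prospective centre) together with internally disjoint paths from $c$ to many vertices of $C$.

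The core construction is then as follows. Fix the rim $C$ and a vertex $c$ not on $C$. Because $G$ is $r$-locally $3$-connected, $c$ together with any two vertices of $C$ cannot form an $r$-local separator of the relevant ball; iterating a fan-type / Menger-type argument inside $B_{r/2}(c)$, I would extract a family of internally disjoint $c$–$C$ paths $P_1,\dots,P_k$ whose endpoints on $C$ occur in this cyclic order around $C$, chosen so that consecutive endpoints are close enough on $C$ that the piece formed by $P_i$, $P_{i+1}$ and the arc of $C$ between their endpoints has length at most $r$. The subdivided wheel is then $C \cup P_1 \cup \dots \cup P_k$ with centre $c$: its rim is (a subdivision of) $C$, the spokes are the $P_i$, and each piece is exactly one of the short cycles just described. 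Since all its pieces have length at most $r$, it is in particular $r$-bounded, hence $r$-local, and we are done — in fact this would give a slightly stronger conclusion than stated.

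The main obstacle I anticipate is \emph{locality bookkeeping}: all the connectivity I can use is local ($r$-local $2$- and $3$-separators live inside balls of radius roughly $r/2$), so the Menger/fan arguments have to be run inside $B_{r/2}(c)$ rather than in $G$, and I must ensure simultaneously (i) that enough $c$–$C$ paths survive, (ii) that they stay internally disjoint, and (iii) that consecutive endpoints on $C$ are close enough to keep each piece of length $\le r$. Controlling (iii) is the delicate point: if two successive $c$–$C$ paths have far-apart endpoints on the rim, the piece between them may be too long, and I would need to insert an additional spoke, which in turn requires producing a new internally disjoint $c$–$C$ path — this is precisely where the absence of an $r$-local $2$-separator $\{c,\text{(vertex on }C)\}$ must be invoked, via the structure of $\expl(c,y)$ and the fact that its torso is not a cycle or edge. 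A secondary subtlety is the degenerate case where the candidate rim is itself short but the centre is forced onto $C$ or into a subdivided edge; there I would fall back on extracting a subdivided $K_4$ as in \autoref{bounded2}. Once the combinatorial picture is set up, verifying $r$-boundedness is immediate, and \autoref{bd_diam} then upgrades it to the $r$-local conclusion stated.
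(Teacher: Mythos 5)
There is a genuine gap, and it sits exactly where you flag it yourself. Your plan rests on two unproved steps: (a) that the absence of $r$-local $2$-separators lets you run a Menger/fan argument inside $B_{r/2}(c)$ to extract many internally disjoint $c$--$C$ paths, and (b) that you can space their endpoints on $C$ so that every piece has length at most $r$. Neither follows from the hypotheses. $r$-local $3$-connectivity only says that no single vertex and no pair of vertices at bounded distance has the specific disconnectedness/torso behaviour of an $r$-local $1$- or $2$-separator; it is not a Menger-type statement, and it cannot be iterated to produce a new spoke internally disjoint from previously chosen spokes: the connectivity of the punctured explorer-neighbourhood $\expl(c,y)-c-y$ gives you a path avoiding $c$ and $y$, but nothing prevents that path from running through your earlier spokes or from attaching to $C$ far from where you need it. Step (b) is the actual crux of the whole lemma, and "the exact choice will be dictated by what keeps the pieces short" is not an argument. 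Note also that you are implicitly aiming at the stronger conclusion that the wheel is $r$-bounded (all pieces short), which is strictly harder than what the lemma asks; the paper only produces an $r$-local subdivision here and defers boundedness to \autoref{make_bounded}. (Minor point: \autoref{bd_diam} goes in the other direction -- from $r$-local to bounded diameter -- so it cannot "upgrade" anything at the end.)

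The paper's route avoids disjoint-paths arguments altogether and works in the cycle space instead, which is where local connectivity actually gives leverage: local $2$/$3$-connectivity yields generating sets of cycles of length at most $r$. Concretely, it first proves a dichotomy (\autoref{nice_geo}): either $G$ is triangular, in which case a wheel is assembled from short triangles via a parity argument in an auxiliary graph on the edges at $v$ and $w$ (\autoref{3con_to_wheel_special}); or $G$ has a geodesic cycle of length at most $r$ with at least four edges, in which case one builds a pre-fan from a generating set of "friendly" short cycles (\autoref{pre-fan-exists}, again by a parity argument in an auxiliary graph), reduces it to a fan (\autoref{pre-fan_to_fan}), and then either the fan-plus-cycle construction (\autoref{cycle_plus_fan_to_wheel}) yields the wheel directly, or one obtains a theta-graph and goes through the classes $\Wcal$ and $\Wcal^*$ with a bridge/detour analysis (\autoref{theta_short-path}, \autoref{theta_improved}). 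Your fan intuition is in the same spirit as the paper's fans, but the mechanism producing them -- generating sets of short cycles and $\Fbb_2$-parity, not Menger inside a ball -- is the missing idea, and without it the proposal does not go through.
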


\begin{proof}[Proof that \autoref{3con_to_wheel} implies \autoref{series-para}.]
Let $G$ be a graph and $r$ be a parameter. Take the $r$-local block-cutvertex-graph decomposition 
of $G$ as in {\cite[\autoref*{block-cut}]{{loc2sepr}}}. Then we take the 
$r$-local 2-separator decomposition {\cite[\autoref*{thm:main_intro}]{{loc2sepr}}} of 
each block that is not a single edge. If all torsos of these decompositions are cycles, we can 
stick these graph-decompositions together to obtain the graph-decomposition in condition 2 of 
\autoref{series-para}.

Hence we may assume, and we do assume, that one of the torsos of these decomposition is 
$r$-locally 3-connected. Call that torso $\beta$. The 2-block $\gamma$ containing $\beta$ is a 
cut-subgraph of $G$. To see that cut all local cutvertices contained in that block and then 
delete all vertices outside that block. And $\beta$ is a cut-subgraph of $\gamma$. To see that 
cut all local 2-separators contained in $\beta$ and then delete all vertices outside $\beta$. 
By  \autoref{3con_to_wheel} the torso $\beta$ has a subgraph $H$ that is an 
 $r$-local 
subdivision of a wheel. To 
summarise, $H$ is a cut-subgraph of $G$. By \autoref{subgraph_for_free}, $H$ is a subgraph of 
$G$; that is, $G$ has an $r$-local subdivision of a wheel. By \autoref{make_bounded}, $G$ has an 
$r$-local subdivision of a wheel.

By \autoref{are-3-con} conditions 1 and 2 in \autoref{series-para} mutually exclusive.
\end{proof}

\section{From local 3-connectivity to a bounded wheel}

This section is dedicated to the proof of \autoref{3con_to_wheel}, which is our last step in the 
proof of the main result stated in the Introduction. 
This proof is subdivided into several subsections. 
In \autoref{sec:dicho}, we prove that any weighted $r$-locally $3$-connected graph 
contains a geodesic cycle of length at most $r$ with at least four edges or else a certain 
$K_4^-$-subgraph. In \autoref{special}, we use this particular $K_4^-$-subgraph to construct a 
bounded wheel as a subgraph. 
Hence it remains to construct a bounded subdivision of a wheel using that geodesic cycle. This is 
done in several steps. 

In \autoref{subsec:fan} we construct a 
bounded fan in any $r$-locally 3-connected graph.
In \autoref{det:bd_wheel}, we give conditions under which we can construct an $r$-local subdivision 
of the $3$-wheel (that is, $K_4$) or the $4$-wheel.
In \autoref{subsec:constr_theta}, 
and 
\autoref{subsec:combine} we show how one can combine 
the results of the previous subsections to give a proof of \autoref{3con_to_wheel}.

\subsection{A dichotomy result}\label{sec:dicho}

A cycle is \emph{geodesic} within a graph $G$ if it contains a shortest path of $G$ between any two 
of its vertices.

We say that a weighted graph $G$ is \emph{triangular} (with parameter $r$) if all its geodesic 
cycles of length at most $r$ are triangles and it contains an edge $e$ that is a shortest path 
between its endvertices that is in at least two triangles of length at most $r$. 

\begin{eg}
A graph that is triangular with any parameter $r\geq 1$ is chordal.
If all edges have length one and the graph has a $K_4^-$ subgraph, the converse is also true.
\end{eg}

\begin{thm}\label{dichotomy}\label{nice_geo}
 Every $r$-locally $3$-connected (weighted) graph has geodesic cycle of length at most $r$ with 
at least four edges or it is triangular with parameter $r$. 
\end{thm}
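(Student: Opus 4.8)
The plan is to argue by contradiction: assume $G$ is $r$-locally $3$-connected, that every geodesic cycle of length at most $r$ is a triangle, and that $G$ is \emph{not} triangular with parameter $r$, and derive a contradiction with local $3$-connectivity. Since $G$ is $r$-locally $2$-connected, it has a cycle of length at most $r$, hence a geodesic cycle of length at most $r$; by the first assumption this is a triangle $T$ on vertices $\{a,b,c\}$. The failure of triangularity means that \emph{no} edge which is a geodesic between its endvertices lies in two triangles of length at most $r$; in particular each edge of $T$ lies in exactly one short triangle (namely $T$ itself), \emph{provided} the edges of $T$ are themselves geodesics between their endpoints. So the first thing I would check is that an edge of a geodesic triangle of length at most $r$ is a shortest path between its endvertices — if some edge $ab$ of $T$ had length exceeding $d(a,b)$, then a shortest $a$–$b$ path together with $ac\cup cb$ would produce a shorter closed walk, and one extracts from it a geodesic cycle of length at most $r$ that is not a triangle unless that path is a single edge; this should pin down that the edges of $T$ are geodesics.

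Next I would look at a local ball around one of the triangle vertices, say $v=a$, and use the hypothesis that $v$ is not an $r$-local cutvertex and not part of an $r$-local $2$-separator. The idea is that the edge $bc$ (which is a geodesic between $b$ and $c$, both neighbours of $v$) lies in only the triangle $T$ among short triangles; I want to show this forces $\{b,c\}$ — or $\{v, x\}$ for a suitable $x$ — to behave like an $r$-local $2$-separator of $G$, or forces $v$ to be an $r$-local cutvertex, contradicting $r$-local $3$-connectivity. Concretely, inside the punctured ball $B_{r/2}(v)-v$ one expects $b$ and $c$ to lie in the same component (they are joined by the edge $bc$); the non-triangularity should say that the only way to "go around" is blocked, so that deleting $\{b,c\}$ (or cutting there) locally disconnects a piece of the ball containing $v$, i.e.\ $\{b,c\}$ is an $r$-local $2$-separator. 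This is essentially a local analogue of the classical fact that in a $3$-connected graph a triangle whose edges are in no other triangle would separate, and I would mirror the argument in the proof of \autoref{gen_loc_con} (tracking which edges incident with $v$ go to which component, and using parity of intersection with a short cycle through $v$ crossing the would-be separator).

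The main obstacle, I expect, is the bookkeeping needed to turn "every short geodesic cycle is a triangle, and no geodesic edge is in two short triangles" into an honest $r$-local separation of the correct size at the correct scale $r/2$. The delicate points are: (i) ensuring the paths and cycles I produce actually have length at most $r$ (rather than up to, say, $3r/2$), which forces careful use of the geodesic hypothesis to keep the triangle small and to keep connecting paths inside the relevant ball; (ii) making sure the separating set has size at most two — a priori one might only extract a set of size three (the three triangle vertices), and one must use the extra structure (each edge in a unique short triangle, and the edges being geodesics) to shave one vertex off; and (iii) verifying the "explored ball" $\expl(x,y)$ from \cite{loc2sepr} witnesses the separation, so that the contradiction is with the precise definition of $r$-local $2$-separator, not just an informal notion. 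Once the separator is located, the contradiction is immediate from $r$-local $3$-connectivity, and similarly the cutvertex case handles the degenerate situation where the would-be separator collapses to a single vertex. Thus I would structure the write-up as: (1) reduce to a geodesic triangle $T$ with geodesic edges; (2) use non-triangularity to see each edge of $T$ lies in no other short triangle; (3) localise around a vertex and produce an $r$-local $2$-separator (or cutvertex); (4) contradict \autoref{are-3-con}-style local $3$-connectivity.
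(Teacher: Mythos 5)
There is a genuine gap at the heart of your plan, namely step (3). You assume (for contradiction) that every geodesic cycle of length at most $r$ is a triangle and that no edge which is a shortest path between its endvertices lies in two triangles of length at most $r$, and you then assert that this should force some pair such as $\{b,c\}$ to be an $r$-local $2$-separator (or a vertex to be a local cutvertex). No argument is given for this, and the ``classical fact'' you appeal to -- that in a $3$-connected graph a triangle whose edges lie in no other triangle would separate -- is false: in the triangular prism every edge of either triangle lies in exactly one triangle, yet the graph is $3$-connected with no $2$-separator at all (similarly, in a wheel $W_n$, $n\geq 4$, each rim edge lies in a unique triangle). So the absence of a second short triangle on $bc$ does not by itself block all local detours around $\{b,c\}$, and ``mirroring \autoref{gen_loc_con}'' does not help, because that parity argument runs in the opposite direction: it refutes a separator given good generating cycles; it does not manufacture a separator from the absence of certain cycles. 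This missing implication is essentially the whole content of the theorem, so the proposal does not yet contain a proof. There is also a smaller flaw in step (1): in the weighted setting an edge of a geodesic triangle need not be a shortest path between its endvertices (e.g.\ edge lengths $3,1,1$), since the shortest path witnessing geodesicity may be the opposite two-edge path; at most one edge of $T$ can fail in this way, so this is repairable, but as written the reduction is not correct.

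For comparison, the paper argues in the positive direction rather than hunting for a separator. Using that the punctured ball around a vertex of a short triangle is connected and that its cycles are generated by cycles of length at most $r$, a parity count over the edges of the triangle incident with that vertex produces a second short triangle sharing exactly one edge, hence a $K_4^-$ with both triangles short (\autoref{simple_lemma}); \autoref{K4minus} then converts this $K_4^-$ either into a cycle of length at most $r$ with at least four edges or into an edge that is a shortest path lying in two short triangles; finally, taking a minimum-length cycle with at least four edges and analysing a shortcut yields either a geodesic such cycle or again the distinguished edge, giving the dichotomy. These three ingredients -- the generation-plus-parity construction of the $K_4^-$, the shortest-path surgery of \autoref{K4minus}, and the minimality/shortcut argument for geodesicity -- are exactly what your sketch lacks, and the local-separator route you propose in their place does not go through as stated.
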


First we do some preparation. 

\begin{lem}\label{K4minus}
Let $G$ be a graph with a $K_4^-$ subgraph such that its two triangles have 
length at most $r$. Then $G$ contains a cycle of length at most $r$ with at least 
four edges -- or it contains an edge that is a shortest path 
between its endvertices that is in at least two triangles of length at most $r$. 
\end{lem}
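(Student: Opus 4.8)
\textbf{Proof plan for \autoref{K4minus}.}

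The plan is to analyse a $K_4^-$ subgraph $H$ with vertices $a,b,c,d$, where $a,b,c$ and $b,c,d$ are the two triangles (so $bc$ is the shared edge and $ad$ is the missing edge). Write $\ell(bc)$ for the length of the edge $bc$. The first case split is on whether $bc$ is itself a shortest path between $b$ and $c$ in $G$. If it is, then we are essentially done: $bc$ lies in the two triangles $abc$ and $dbc$, both of length at most $r$ by hypothesis, and this is exactly the second alternative in the conclusion --- unless $abc$ or $dbc$ fails to be a triangle of length at most $r$ for a subtle reason, but by hypothesis the two triangles of the $K_4^-$ have length at most $r$, so this works directly.

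So the main work is the case where $bc$ is \emph{not} a shortest $b$--$c$ path; let $P$ be a shortest $b$--$c$ path in $G$, so the length of $P$ is strictly less than $\ell(bc)$. The plan is to use $P$ to build a short long cycle. Consider the closed walk $a b P c a$ (go from $a$ to $b$, along $P$ to $c$, back to $a$): it has length $\ell(ab)+\ell(P)+\ell(ca) < \ell(ab)+\ell(bc)+\ell(ca) \le r$, since the triangle $abc$ has length at most $r$. Symmetrically the closed walk $d b P c d$ has length $< \ell(db)+\ell(bc)+\ell(cd)\le r$. Now $P$ together with the edge $ab$ and the edge $ac$ need not be a cycle, because $P$ might pass through $a$; similarly it might pass through $d$. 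The key sub-step is therefore to extract an honest cycle: if $P$ avoids $a$, then $ab \cup P \cup ca$ is a cycle of length at most $r$, and I claim it has at least four edges --- it has the two edges at $a$ plus all edges of $P$, and if $P$ were a single edge $bc$ then $P$ would not be shorter than $bc$ (as the graph is simple there is only one $b$--$c$ edge), so $P$ has at least two edges, giving at least four edges total; this is the first alternative. The symmetric argument works if $P$ avoids $d$.

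The remaining case, and the one I expect to be the main obstacle, is when the shortest path $P$ passes through \emph{both} $a$ and $d$. Then $P$ splits as $P_1 \cup P_2 \cup P_3$ where $P_1$ runs $b$ to one of $\{a,d\}$, $P_2$ connects $a$ and $d$, and $P_3$ runs from the other of $\{a,d\}$ to $c$; subpaths of shortest paths are shortest paths, so $P_2$ is a shortest $a$--$d$ path. Say the order along $P$ is $b,\dots,a,\dots,d,\dots,c$ (the other order is symmetric after swapping $a\leftrightarrow d$). Then consider the cycle formed by $P_1$ (from $b$ to $a$), the edge $ab$: this is a closed walk of length $\ell(P_1)+\ell(ab)$, and since $P$ is shortest we have $\ell(P_1) \le \ell(P) - \ell(P_2) - \ell(P_3)$; I want to bound this by $r$ and find a long-enough cycle, or else argue that $\ell(P_1)<\ell(ab)$ would contradict $P$ being shortest (replace the prefix of $P$ by the edge $ab$). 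The cleanest line is: the edge $ab$ together with $P_1$ is a cycle $C_1$ (as $P_1$ is a subpath of the shortest path $P$, it is internally disjoint from $\{a,b\}$ except at its ends); if $C_1$ has at least four edges and length at most $r$ we are in alternative one. If $C_1$ has fewer than four edges then $P_1$ is a single edge, i.e. $ba$ is an edge; but then $\ell(P_1)=\ell(ba)$ and we may reroute, and similarly analysing $P_3$ and the edge $cd$ (or $ca$), we reduce to $P$ being the concatenation of edge $ba$, the shortest path $P_2$ from $a$ to $d$, and edge $dc$, whence $ad \cup P_2$-analysis and the triangle lengths force either $P_2$ to have at least two edges --- giving a long cycle $ad \cup P_2$ of length $\ell(P_2)+\ell(ad)\le \ell(P)+\ell(ad) < \ell(ab)+\ell(bc)+\ell(ca)+\ell(ad)$, which needs a little care to bound by $r$ --- or $P_2$ is a single edge, contradicting that $ad$ is the missing edge of $K_4^-$. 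Making the length bookkeeping in this last case come out below $r$, rather than merely below $2r$, is the delicate point, and I would handle it by always comparing against whichever of the two triangles ($abc$ or $dbc$) the relevant subpath lives under, using that $\ell(P)<\ell(bc)$ strictly to absorb the slack.
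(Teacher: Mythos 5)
Your first two cases are fine and the length bookkeeping there is correct: if $bc$ is a shortest $b$--$c$ path you land in the second alternative, and if a shortest $b$--$c$ path $P$ avoids $a$ (or $d$), then $P$ together with the two triangle edges at $a$ (or at $d$) is a cycle of length less than the corresponding triangle, hence at most $r$, with at least four edges. The genuine gap is the remaining case, where $P$ passes through both $a$ and $d$, and your sketch for it does not go through as written. Concretely: (i) from ``$C_1$ has fewer than four edges'' you conclude that $P_1$ is a single edge, but $P_1$ could have two edges (then $C_1$ is a triangle), so the reduction to $P=ba\cdot P_2\cdot dc$ is not justified; (ii) you appeal to ``$ad$ is the missing edge of $K_4^-$'' to rule out $P_2$ being a single edge, but the $K_4^-$ is only a \emph{subgraph} of $G$, so $G$ may well contain the edge $ad$ (and if it does, its length is unconstrained, so the proposed cycle $ad\cup P_2$ has no useful length bound); conversely, if $ad\notin E(G)$ the cycle $ad\cup P_2$ does not exist at all; and (iii) you yourself flag that the final bound comes out below $2r$ rather than below $r$ and is left unresolved. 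So the hard case is genuinely open in your write-up.

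The missing idea -- which is how the paper's proof avoids this case split entirely -- is to close the shortest path $P$ with the shared edge $e=bc$ itself rather than with the triangle paths through $a$ or $d$. Since $G$ is simple, $P$ does not use the edge $bc$ unless $P=bc$, so $P+bc$ is always a cycle; its length is $\ell(P)+\ell(bc)\le\bigl(\ell(ab)+\ell(ac)\bigr)+\ell(bc)\le r$, because $b\,a\,c$ is a $b$--$c$ walk of length $\ell(ab)+\ell(ac)$ and $P$ is shortest. Hence if $P$ has at least three edges (which is automatic in your troublesome case, where $a$ and $d$ are both interior vertices of $P$), the cycle $P+bc$ already has at least four edges and you are done; the only remaining situation is $P$ having exactly two edges, where one of the two triangles avoids the middle vertex of $P$ and replacing $bc$ by $P$ in that triangle gives a four-edge cycle of length at most $r$. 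With that one observation your case analysis collapses to the paper's short argument; without it, the case where $P$ meets both $a$ and $d$ remains unproved.
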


\begin{proof}
Let $e$ be the unique edge of the $K_4^-$-subgraph that is in both its triangles.
Let $P$ be a shortest path between the endvertices of the edge $e$. 
If the path $P$ is equal to the edge $e$, we 
are done. Thus we may assume, and we do assume, that the path $P$ contains at least two edges.
If the path $P$ contains at least three edges, consider the cycle $P+e$. This cycle has length at 
most $r$ as it can be obtained from a triangle of $K_4^-$ of length at most $r$ by replacing 
its path of two edges between the endvertices of $e$ by the (shortest) path $P$. 
Hence we may assume, and we do assume, that the path $P$ consists of precisely two edges. Thus 
there is one of the two triangles of $K_4^-$ of length at most $r$ that does not contain the 
middle vertex of the path $P$. We obtain a cycle with four edges from that triangle by replacing 
the edge $e$ by the path $P$. As $P$ is a shortest path, this $4$-cycle has length at most $r$.  
\end{proof}

\begin{lem}\label{simple_lemma}
 Every $r$-locally $3$-connected graph $G$ contains a cycle of length at most $r$ with at least 
four edges -- or it is triangular. 
\end{lem}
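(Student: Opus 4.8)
The plan is to derive \autoref{simple_lemma} as an essentially immediate consequence of \autoref{dichotomy} together with \autoref{K4minus}, after one small structural observation about what ``triangular with parameter $r$'' unpacks to. By \autoref{dichotomy}, any $r$-locally $3$-connected graph $G$ either contains a geodesic cycle of length at most $r$ with at least four edges -- in which case that very cycle is the desired cycle and we are done -- or $G$ is triangular with parameter $r$. So the whole content of \autoref{simple_lemma} is to check that in the second case $G$ is triangular in the (unparametrised) sense the lemma refers to; but since the lemma's conclusion is literally ``or it is triangular'', I would read this as the parametrised notion with the fixed $r$ already in play, and then the two lemmas are simply the same dichotomy restated with the word ``geodesic'' dropped from the first alternative. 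The only thing to verify is therefore that the first alternative can be weakened from ``geodesic cycle of length at most $r$ with at least four edges'' to merely ``cycle of length at most $r$ with at least four edges'' -- which is trivial, since a geodesic cycle is in particular a cycle -- so \autoref{simple_lemma} follows from \autoref{dichotomy} verbatim, with the hypothesis being weakened in exactly the harmless direction.

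More carefully, first I would invoke \autoref{dichotomy} on $G$ to split into the two cases. In the geodesic-cycle case, I would observe that this geodesic cycle has length at most $r$ and at least four edges, which is precisely the first outcome of \autoref{simple_lemma}. In the triangular case, $G$ is triangular with parameter $r$, and by the definition recalled just before \autoref{dichotomy} this means: all geodesic cycles of $G$ of length at most $r$ are triangles, and $G$ contains an edge $e$ that is a shortest path between its endvertices and lies in at least two triangles of length at most $r$. The second clause says in particular that $G$ has a $K_4^-$ subgraph whose two triangles have length at most $r$ (take $e$ together with its two triangles of length at most $r$; these triangles share exactly the edge $e$, forming a $K_4^-$). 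At this point I would feed that $K_4^-$ into \autoref{K4minus}: it yields either a cycle of length at most $r$ with at least four edges -- the first outcome again -- or an edge that is a shortest path between its endvertices lying in at least two triangles of length at most $r$, which is exactly the second defining clause of ``triangular''. Combining with the first defining clause (all short geodesic cycles are triangles), $G$ is triangular, completing the dichotomy.

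The main obstacle -- such as it is -- is bookkeeping rather than mathematics: making sure that the two formally distinct statements ``triangular'' in \autoref{simple_lemma} and ``triangular with parameter $r$'' in \autoref{dichotomy} are being used with a consistent meaning, and that the passage from the $K_4^-$ subgraph produced inside the ``triangular'' hypothesis back out to the clean statement of \autoref{simple_lemma} does not secretly lose the ``all geodesic short cycles are triangles'' half of the definition. I would handle this by noting that \autoref{dichotomy} already delivers the full ``triangular with parameter $r$'' conclusion in its second case, so the only role of \autoref{K4minus} here is to show that the first outcome of \autoref{simple_lemma} (a short cycle with $\ge 4$ edges) and the second outcome (triangular) are not actually weaker than \autoref{dichotomy} -- in fact \autoref{simple_lemma} is just \autoref{dichotomy} with ``geodesic'' erased, hence strictly implied by it. So in the write-up I would present \autoref{simple_lemma} as a corollary of \autoref{dichotomy}, with \autoref{K4minus} invoked only if one insists on extracting a short $\ge 4$-cycle directly from the $K_4^-$ rather than reusing \autoref{dichotomy}'s own case analysis.
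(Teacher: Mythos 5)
Your argument is circular with respect to the paper's logical structure, so it cannot serve as a proof of \autoref{simple_lemma}. In the paper, \autoref{simple_lemma} is proved first and is then the very first step in the proof of \autoref{dichotomy}: that proof opens with ``By \autoref{simple_lemma}, we may assume, and we do assume, that the graph $G$ has a cycle $o$ of length at most $r$ with at least four edges'', and only then shortens a minimal such cycle to reach the geodesic/triangular dichotomy. So deducing \autoref{simple_lemma} from \autoref{dichotomy} (which, as you correctly note, formally implies it, since a geodesic cycle is in particular a cycle) begs the question: you would be using a statement whose only available proof already relies on the lemma you are trying to establish. Nothing in your write-up supplies an independent proof of \autoref{dichotomy}, and indeed that theorem is strictly stronger, so this route does not avoid the real work.

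The content you are missing is precisely the construction that the paper carries out directly from local $3$-connectivity: $G$ is $r$-locally $2$-connected, hence has a cycle of length at most $r$; if it is a triangle $o$, pick a vertex $v$ of $o$ with a neighbour $w$ outside $o$, find a path $P$ in the punctured ball $B_{r/2}(v)-v$ from $w$ to a neighbour of $v$ on $o$ (this uses that $v$ is not an $r$-local cutvertex), generate the cycle $P+v$ by cycles of length at most $r$ (which one may assume are all triangles, else one is done), and run a parity argument on the edges of $o$ incident with $v$ to extract a generating triangle sharing exactly one edge with $o$. These two triangles form a $K_4^-$ with both triangles of length at most $r$, and only at that point does \autoref{K4minus} enter, yielding either a short cycle with at least four edges or the edge condition in the definition of triangular (the clause that all short geodesic cycles are triangles being immediate in the remaining case, since all short cycles are then triangles). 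Your proposal uses \autoref{K4minus} only to re-derive the triangular conclusion from a $K_4^-$ that you obtain out of the triangularity hypothesis itself, which is redundant; the genuinely needed step is producing that $K_4^-$ (or a short $\geq 4$-edge cycle) from $r$-local $3$-connectivity alone, and that step is absent.
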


\begin{proof}
 As $G$ is $r$-locally $2$-connected, it includes a cycle of length at most $r$. 
 If $r\leq 3$, the cycle $o$ is a triangle. Otherwise if $o$ has at 
least four edges, we are done. So we may assume, and we do assume, that $o$ is a triangle. As $G$ 
is $r$-locally $3$-connected, the component of $o$ has another vertex. Let $v$ be a vertex of $o$ 
that has a neighbour outside $o$. Denote this neighbour by $w$. Consider the punctured ball 
$B_{r/2}(v)-v$. In there, there is a path $P$ from $w$ to some neighbour of $v$ on the triangle 
$o$. 
Then $P+v$ is a cycle. By {\cite[\autoref*{cycle_gen}]{{loc2sepr}}} and 
{\cite[\autoref*{gen}]{{loc2sepr}}}, the cycle $P+v$ is generated by cycles of length at most 
$r$. Similarly as above, we may assume, and we do assume, that all of them are triangles. 
As the cycle $P+v$ contains precisely one edge incident with the vertex $v$ on $o$, one of the 
generating triangles must contain an odd number of edges incident with $v$ on the cycle $o$.
Thus there is a generating triangle $o'$ sharing precisely one edge with the triangle $o$. 
Hence the two triangles $o$ and $o'$ form a $K_4^-$ subgraph. 

If $G$ has a cycle of length at most $r$ that is not a triangle, we are done.
Otherwise, by  \autoref{K4minus} the graph $G$ is triangular.  
\end{proof}

\begin{proof}[Proof of \autoref{dichotomy}.]
By \autoref{simple_lemma}, we may assume, and we do assume, that the graph $G$ has a cycle 
$o$ of length at most $r$ with at least four edges.

Pick a cycle $o$ of minimal length amongst all cycles with at least four edges. 
If $o$ is geodesic, we are done. So suppose that the cycle $o$ is 
not geodesic. Then there
 is a shortcut between two of its vertices; that is a 
path joining two vertices of the cycle $o$ whose length is strictly shorter than the distance 
between these vertices on the cycle $o$. 
Such a shortcut cuts the cycle $o$ into two cycles, each of strictly 
smaller length. 
By minimality of $o$, both these new cycles must be triangles. In particular, the shortcut must be 
a single edge. 
Denote that edge by $e$. To summarise, we have found an edge $e$ that is a shortest path between 
its endvertices that is contained in two triangles of length at most $r$. 

If all geodesic cycles of length at most $r$ of $G$ are triangles, then $G$ is triangular.
Otherwise $G$ has a geodesic cycle of length at most $r$ that is not a triangle, which is the 
other outcome of the theorem. 
This completes the proof. 
\end{proof}

\subsection{The triangular case}\label{special}

In this subsection we prove the following special case of \autoref{3con_to_wheel}. 

\begin{lem}\label{3con_to_wheel_special}
Let $G$ be a (weighted) $r$-locally $3$-connected graph that is triangular with parameter 
$r$.
Then $G$ contains an $r$-weighted wheel as a subgraph. 
\end{lem}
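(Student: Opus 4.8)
The plan is to exploit the edge $e$ guaranteed by triangularity: $e$ is a shortest path between its endvertices and lies in at least two triangles of length at most $r$. Think of these triangles as the first ``pieces'' of a wheel-to-be, with $e$ playing the role of a spoke and the apexes of the triangles as the initial rim vertices on either side of that spoke. The idea is to grow a fan of short triangles around one endvertex of $e$ — call it $c$, the prospective centre — by repeatedly using $r$-local $3$-connectivity to find short cycles that force new triangles sharing an edge with the current outer boundary, exactly as in the proof of \autoref{simple_lemma}. Each new triangle either extends the fan with a fresh rim vertex or eventually closes it up into a full wheel.

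First I would set up the fan. Let $e = cv$ and let $T_1 = c v w_1$ and $T_2 = c v w_2$ be two triangles of length at most $r$ through $e$. I would then try to rotate around $c$: having built a fan $c w_1 w_2 \dots w_k$ of consecutive short triangles $c w_i w_{i+1}$, look at the ``last'' spoke $c w_k$. If $w_k$ has a neighbour not already in the fan, then arguing inside the punctured ball $B_{r/2}(c) - c$ (using the cycle-generation facts from \cite{loc2sepr} cited in \autoref{simple_lemma}, together with triangularity, which forces all the short generating cycles to be triangles) produces a triangle $c w_k w_{k+1}$ sharing the edge $c w_k$ with the fan, extending it. This is the engine of the construction, and it is essentially a reprise of the $K_4^-$-extraction argument already in the excerpt, now applied iteratively and anchored at the fixed centre $c$.

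The main obstacle — and where the real work lies — is termination and closure: I must show this rotation process cannot wander forever and must eventually come back around to $w_1$, producing the rim cycle $w_1 w_2 \dots w_k w_1$ and hence an honest wheel with centre $c$, rather than, say, spiralling or getting stuck at a $w_k$ all of whose neighbours are already used but which is not adjacent to $w_1$. Here I would use finiteness of the relevant ball (the whole construction lives in $B_{r/2}(c)$, which is finite when $r < \infty$; the $r = \infty$ case needs the graph itself to host a wheel, handled by the same local argument since a short generating set still exists) together with $r$-local $3$-connectivity to rule out the bad stopping configuration: if $w_k$ were a local cutvertex or $\{c, w_k\}$ a local $2$-separator of the relevant ball, that would contradict the hypothesis. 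A degree/parity argument on edges at $c$ within short cycles, as used for the $K_4^-$ step, should show the boundary of the fan is always a single cycle through $c$, so when no further extension is possible the fan boundary is exactly a rim and we are done.

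Finally I would check that the resulting wheel is genuinely a subgraph (not merely a minor or cut-subgraph) — automatic, since we only ever selected vertices and paths of $G$ — and that it is an $r$-\emph{weighted} wheel, i.e.\ all its pieces are short: each piece is one of the triangles $c w_i w_{i+1}$ we selected, each of length at most $r$ by construction, so after suppressing degree-two vertices we obtain an $r$-weighted wheel, using \autoref{are-3-con} only implicitly. I expect the bookkeeping around ``which neighbours are already in the fan'' and the precise invariant on the fan boundary to be the fiddly part, but conceptually everything reduces to iterating the single-triangle-extension move from \autoref{simple_lemma} around a fixed centre and arguing it must close up.
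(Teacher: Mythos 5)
Your overall strategy --- anchoring at the $K_4^-$ given by triangularity and chaining short triangles around one endvertex of $e$ until they close into a wheel --- is in the same spirit as the paper's proof, but the two steps you flag as ``the real work'' are exactly where the argument breaks, and the justification you offer for them is not valid. First, the extension step is not established: the parity/generation argument from \autoref{simple_lemma} produces \emph{some} short triangle meeting a prescribed edge set at the centre oddly; it does not produce a triangle through the specific boundary spoke $cw_k$, avoiding the edge $e$ and avoiding earlier spokes. Second, and more seriously, your termination/closure argument appeals to ``$w_k$ being a local cutvertex or $\{c,w_k\}$ a local $2$-separator \emph{of the relevant ball}, contradicting the hypothesis''. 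The hypothesis of \autoref{3con_to_wheel_special} is that $G$ itself has no $r$-local cutvertex or $r$-local $2$-separator; a stuck rotation does not yield any such local separator of $G$, so no contradiction arises. Indeed, the rotation around a fixed centre can genuinely get stuck in an $r$-locally $3$-connected triangular graph: it may happen that every further short geodesic triangle through the current spoke is forced to use the edge $e$ (equivalently, to pass through the other endvertex of $e$). This is not a degenerate situation to be excluded; it is one of the two cases that must be handled, and in it the wheel is completed differently --- not by returning to $w_1$ around the centre, but by routing the rim through the other endvertex of $e$.

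Concretely, the paper's proof supplies the two ingredients you are missing. (a) Closure is obtained not by a greedy rotation but by an auxiliary graph $H$ on the edges incident with $v$ or $w$ (other than $e$), with adjacency witnessed by short geodesic cycles; a parity argument over a generating family of geodesic triangles (\autoref{sublem21}) shows $vx$ and $vy$ lie in the same component of $H$, which is the rigorous replacement for ``the fan must come back around''. (b) The edges of $H$ are split into red and green according to whether a witnessing triangle can avoid $e$; a red path yields triangles that avoid $e$, hence cannot contain both $v$ and $w$, hence share a single centre and form the pieces of a wheel together with $vxw$ and $vyw$, while in the green case one closes the wheel with a triangle through $e$ and the triangle $vxw$, using a \emph{minimal} green cut to guarantee at least three pieces. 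Your sketch contains no analogue of the red/green dichotomy, no argument that the triangles in your chain share a common centre (triangularity only forbids both $v$ and $w$ on a triangle that avoids $e$), and no safeguard ensuring at least three pieces when closure happens early; without these the proof does not go through.
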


\begin{proof}
As $G$ is triangular, it contains an edge $e$ that is a shortest path between its endvertices, and 
there are two triangles of length at most $r$ containing $e$. 
Denote the two endvertices of the edge $e$ by $v$ and $w$. Denote the two vertices on the triangles 
not incident with the edge $e$ by $x$ and $y$. To summarise, the vertices $x$, $y$, $v$ and $w$ 
span a $K_4^-$-subgraph.

Now we construct the following auxiliary graph. Its vertex set consists of the edges incident 
with the vertices $v$ or $w$ 
different from the edge $e$. Two such edges $e_1$ and $e_2$ are adjacent in this auxiliary graph if 
there is a geodesic cycle of $G$ of length at 
most $r$ containing the edges $e_1$ and $e_2$. We denote this auxiliary graph by $H$. 

\begin{sublem}\label{sublem21}
 If the graph $G$ is $r$-locally $3$-connected, then in the graph $H$ there is a path from the 
vertex $vx$ of $H$ to the vertex $vy$. 
\end{sublem}

\begin{proof}
By assumption, the punctured explorer-neighbourhood $\expl(v,w)-v-w$ is connected. Let $P$ be a 
path of $G$ from the vertex $x$ to the vertex $y$. Let $o$ be the cycle obtained from the path $P$ 
by 
adding the vertex $v$. By {\cite[\autoref*{cycle_gen}]{{loc2sepr}}} and 
{\cite[\autoref*{gen}]{{loc2sepr}}}, there is a set $\Ccal$ of cycles of length at most $r$ of 
$G$ 
generating the cycle $o$. 
As the geodesic cycles generate all cycles, we pick the set $\Ccal$ so that it only contains 
geodesic cycles. As the graph $G$ is triangular, all cycles in the set $\Ccal$ are triangles. 

Now let $k$ be the connected component of the graph $H$ containing the vertex $vx$. Each triangle 
in 
the set $\Ccal$ containing the vertex $v$ or $w$ gives rise to precisely one edge of the graph 
$H$. 
So 
each 
of the generating triangles contains an even number of edges that are vertices of the component 
$k$. As the cycle $o$ 
is equal to the sum over the cycles in $\Ccal$ -- evaluated over the field $\Fbb_2$ --, it also 
must contain an even number of edges that are 
vertices of the component $k$. By construction, it contains precisely two edges that are 
vertices of the auxiliary graph 
$H$; these are the edges $vx$ and $vy$ of the graph $G$. 
And the vertex $vx$ is in the component $k$ by construction. So also the vertex $vy$ of $H$ must be 
in the component $k$.
Thus the vertices $vx$ and $vy$ are in the same component of the graph $H$. 
\end{proof}

An edge of the auxiliary graph $H$ between vertices $e_1$ and $e_2$ is \emph{green} if all geodesic 
cycles of the graph $G$ of 
length at most $r$ containing $e_1$ and $e_2$ contain the edge $e$. Other edges of $H$ are 
\emph{red}. We distinguish two cases.

{\bf Case 1:} there is a path in the graph $H$ from the set $\{vx,wx\}$ to the set $\{vy,wy\}$ 
consisting only of red edges.\footnote{This case includes the trivial case where $xy$ 
is an edge of $G$. We will not use this in our argument.} Let $o_1,..., o_n$ be cycles giving rise 
to the 
 edges of that path. As all these edges are red, we may pick, and we do pick, the cycles $o_i$ such 
that they do not contain the edge $e$. 
As $G$ is triangular, all these cycles are triangles. So none of these triangles 
can use both of the vertices $v$ and $w$. So there is one of these vertices, say $v$, that is a 
vertex of all these triangles. 
Thus the vertex $v$ is the center of a wheel, whose pieces are the triangles $o_1$,..., $o_n$, 
$vxw$ and $vyw$. As all its pieces have length at most $r$, this wheel is an $r$-weighted 
wheel. 

{\bf Case 2:} not Case 1; that is, there is a cut of the graph $H$ separating the set $\{vx,wx\}$ 
from
$\{vy,wy\}$ that consists only of green edges. This cut must be nonempty by \autoref{sublem21}.

Let $g$ be a green edge in such a cut such that 
there is a red path (that is a path all whose edges are red) from  $\{vx,wx\}$ to an endvertex of 
$g$. Denote this path by $P$ and the endvertex of $P$ by $z$. For later reference, we point out 
that we can pick, and we do pick, the edge $g$ from a cut separating $\{vx,wx\}$ 
from
$\{vy,wy\}$ that is minimal; in particular $g$ is not equal to the edge between the vertices $vx$ 
and $wx$.

Let $o_1,..., o_n$ be cycles giving 
rise to the 
 edges of that path. As all these edges are red, we may pick, and we do pick, the cycles $o_i$ such 
that they do not contain the edge $e$. 
As $G$ is triangular, all these cycles are triangles. So none of these triangles 
can use both of the vertices $v$ and $w$. So there is one of these vertices, say $v$, that is a 
vertex of all these triangles. 
Let $o$ be a cycle giving rise to the edge $g$. 

We claim that the vertex $v$ is the center of a wheel, whose pieces are the triangles $o_1$,..., 
$o_n$, 
$o$ and $vxw$. As all these cycles have length at most $r$, this wheel would be an $r$-weighted 
wheel. 
So all that remains to show is that there are at least three pieces; that is, that there is at 
least one cycle $o_i$. In other words, the path $P$ must not consists of a single vertex. This is 
not the case, indeed, as the triangle $o$ would then share two edges with the triangle $vxw$. 
Then the triangles  $o$ and $vxw$ would be identical, so the cycle $o$ would give rise to the edge 
between the vertices $vx$ and $wx$. As pointed out above, this violates the choice of the edge $g$.
Hence there is at least one cycle $o_i$, and so $o_1$,..., 
$o_n$, 
$o$ and $vxw$ form the pieces of an $r$-weighted 
wheel centered at the vertex $v$.
\end{proof}

\subsection{Constructing fans}\label{subsec:fan}

A \emph{fan} of parameter $r$ centered around a vertex $v$ is a sequence of oriented cycles 
$o_1,...,o_n$ that all have length at most $r$ and contain the vertex $v$ that satisfy the 
following.
\begin{enumerate}
 \item The intersection $o_i\cap o_j=\{v\}$ if $|i-j|\geq 2$; and 
 \item each cycle $o_i$ has the form $vL_iM_iR_iv$ for subpaths $L_i$, $M_i$ and $R_i$ such that 
$R_{i}=L_{i+1}= o_{i+1}\cap o_{i}$ for all $i\in [n-1]$.
\end{enumerate}
See \autoref{fig:fan}. 
We refer to the vertex $v$ as the \emph{center} of the fan. The \emph{start} (or starting edge) of 
a fan is the first edge of the directed path $L_1$ and the \emph{end} (or ending edge) of the fan 
is the last edge of directed path $R_n$.  Note that the starting edge and ending edge are always 
incident with the center of the fan.

   \begin{figure} [htpb]   
\begin{center}
   	  \includegraphics[height=3cm]{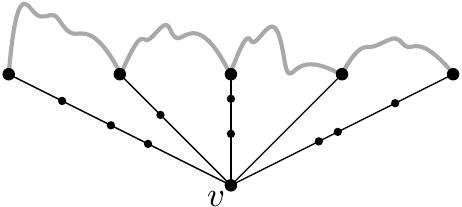}
   	  \caption{A fan centered at the vertex $v$. The paths $M_i$ are highlighted in 
grey.}\label{fig:fan}
\end{center}
   \end{figure}

A \emph{pre-fan} of parameter $r$ centered around the vertex $v$ is a sequence of oriented 
cycles 
$o_1,...,o_n$ that all have length at most $r$ and contain the vertex $v$ 
such that the vertex just after $v$ on $o_i$ is equal to the vertex just before $v$ on $o_{i+1}$ 
(for $i\in [n-1]$). The \emph{start} of a pre-fan is the edge of $o_1$ just before $v$ 
and the \emph{end} of the fan is the edge just after $v$ on $o_n$.  

\begin{eg}
 Every fan is a pre-fan. 
\end{eg}

Roughly speaking, the next lemma gives a way how pre-fans can be `improved to' fans. 

We say that a pre-fan $(v, o_1,...,o_n)$ \emph{contains} another pre-fan $(v, o_1',...,o_n')$ if 
$\bigcup o_i\supseteq \bigcup o'_i$
; 
that is, the first pre-fan contains the second as a subgraph. 
Given a pre-fan $F=(o_1,...,o_n)$, we refer to the cycles $o_i$ as the \emph{pieces} of the pre-fan 
$F$.

\begin{lem}\label{pre-fan_to_fan}
Every pre-fan $\Fcal'$ of parameter $r$ centered at $v$ contains a fan $\Fcal$ of parameter $r$ 
centered at $v$ that 
has the same start and end.

Moreover, if the pre-fan $\Fcal'$ has at least two pieces and the start 
is only an edge of its 
first cycle, and its end is only an edge of its last cycle, then the fan $\Fcal$ has at 
least two pieces.
\end{lem}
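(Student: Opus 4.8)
The plan is to induct on the number $n$ of pieces of the pre-fan $\Fcal'=(v,o_1,\dots,o_n)$. For the base case $n=1$, a cycle through $v$ is already a fan with one piece: take $L_1$ and $R_1$ to be its two edges at $v$ (with the orientation chosen so that $L_1$ is the start of $(v,o_1)$) and $M_1$ the rest, so the fan's start and end agree with the pre-fan's. For $n\ge 2$, note first that since $G$ is simple, ``the vertex just after $v$ on $o_{n-1}$'' and ``the vertex just before $v$ on $o_n$'' -- which coincide by the pre-fan condition -- are joined to $v$ by a single edge $f$; hence $f$ is both ``the edge just after $v$ on $o_{n-1}$'' and ``the edge just before $v$ on $o_n$''. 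So the truncation $(v,o_1,\dots,o_{n-1})$ is a pre-fan with the same start as $\Fcal'$ and with end $f$, and by the inductive hypothesis it contains a fan $\Fcal^-$ with the same start and with end $f$. I will extend $\Fcal^-$ by one more piece coming from $o_n$.

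Before extending I would normalise the last piece $vL_mM_mR_mv$ of $\Fcal^-$ by re-reading its decomposition so that $R_m$ is only its final edge (which is $f$), absorbing the rest of the old last rib into $M_m$; the cycle and all the intersections $o_i^-\cap o_j^-$ are literally unchanged by this, so every fan axiom still holds and the end is still $f$. Thus I may assume the last rib of $\Fcal^-$ is the single edge $f$, with far endpoint $w$ say. Now trace $o_n$ from $v$ along its edge at $v$ that is not $f$, and let $x$ be the first vertex of this trace lying in $U:=\bigcup(\text{cycles of }\Fcal^-)$; such $x$ exists because $w\in U$. If $x=w$, then $o_n$ meets $U$ exactly in $f$, and one checks directly that $o_n$ -- with left rib $f$ (matching $R_m$ of $\Fcal^-$) and right rib its other edge at $v$ -- is a legitimate extra piece: $o_n\cap o_m^-=f$ and $o_n\cap o_k^-=\{v\}$ for $k<m$, because $f$ itself already meets every $o_k^-$ with $k<m$ only in $v$. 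If instead $x\ne w$, so that $o_n$ re-enters the fan region before closing up, I would discard the part of $\Fcal^-$ lying ``beyond'' the cycle of $\Fcal^-$ through $x$ (keeping the front, hence the start, intact) and build the new last cycle from the traced $v$--$x$ subpath of $o_n$ together with a carefully chosen $v$--$x$ subpath inside $\Fcal^-$, so as to restore the rigid intersection pattern of a fan.

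For the ``moreover'' clause: assume $n\ge2$ and the start of $\Fcal'$ lies only on $o_1$ and its end only on $o_n$. The extra piece built above contains the end edge of $o_n$, which by hypothesis lies on no $o_i$ with $i\le n-1$, hence on no cycle of $\Fcal^-\subseteq o_1\cup\dots\cup o_{n-1}$; so the extra piece is genuinely new, and since $\Fcal^-$ already has a piece, the final fan has at least two.

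I expect the re-entry case ($x\ne w$) to be the main obstacle: one must simultaneously re-establish all the fan axioms -- especially that non-consecutive cycles meet only in $v$ -- and keep every cycle of length at most $r$. Length control forces routing along shortest available arcs between the relevant vertices; such an arc is at most half the length of any cycle through its two endpoints, so substituting it never lengthens the arc it replaces, keeping an assembled cycle within $r$. Intersection control, on the other hand, forces the normalisation above and a careful choice of the cut vertex $x$. Making these two demands compatible -- and verifying that the process terminates -- is where the real work of the proof lies.
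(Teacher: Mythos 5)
Your induction reduces the whole lemma to the re-entry case $x\neq w$, and that case -- which is exactly where the content of the lemma lies -- is left as an admitted sketch (``where the real work of the proof lies''). Moreover, the length-control heuristic you offer for it does not work as stated: the traced $v$--$x$ arc of $o_n$ is forced on you (it is the initial segment of the trace) and need not be the shorter arc of $o_n$, so it can have length close to $r$; gluing it to even the shortest $v$--$x$ arc inside $\Fcal^-$ (length up to $r/2$) can produce a ``piece'' of length up to roughly $3r/2$. The paper never assembles two arcs of uncontrolled length: its reduction operation, applied to two cycles meeting at a vertex $x\neq v$, always replaces the \emph{longer} of the two $v$--$x$ arcs by the shorter one, so every cycle it creates is at most as long as one of the two cycles it came from, hence still of length at most $r$; and termination is secured by two explicit measures (a reduction along indices $i<j-1$ strictly decreases the number of pieces; a reduction between consecutive pieces strictly enlarges the shared rib $R_i$ while leaving all other ribs and all lengths unchanged). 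Your proposal has no analogue of either the length invariant or the termination argument, which are precisely the two demands you yourself flag as unreconciled.

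Two further points. Even your ``clean'' case $x=w$ has a hole: the endpoint $w$ of $f$ can lie on $o^-_{m-1}$ (this happens when the shared rib $R_{m-1}=L_m$ runs along all of $o^-_m$ except the edge $f$), and then appending $o_n$ as a new last piece violates fan axiom (1) for the indices $m-1$ and $m+1$; your justification ``$f$ meets every $o^-_k$ with $k<m$ only in $v$'' is only valid for $k\le m-2$. And the ``moreover'' clause is argued only in the clean case: after the surgery you describe for re-entry (discarding the back of $\Fcal^-$ and building a hybrid last cycle running partly inside $\Fcal^-$), it is not clear that two pieces survive, for instance when $x$ lies on the first piece; the paper obtains this part from the invariant that, throughout its reduction process, the first piece never acquires the ending edge and the last piece never acquires the starting edge.
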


\begin{proof}
Let  $(v, o_1,...,o_n)$ be a pre-fan of parameter $r$ centered at $v$. We consider the following 
operation. Assume two cycles $o_i$ and $o_j$ with $i<j$ share a vertex $x$. Let $R_i$ be the path 
from $x$ to $v$ in the cyclic orientation of oriented cycle $o_i$, and let $L_j$ be the path from 
$v$ to $x$ in the cyclic orientation of the oriented cycle $o_j$. 
We obtain $o_i'$ from $o_i$ by replacing the path $R_i$ by $L_j$.
Similarly, we obtain $o_j'$ from $o_j$ by replacing the path $L_j$ by $R_i$.
If the path $R_i$ is not longer 
than the path $L_j$, then $(v, o_1,...,o_i,o_j',o_{j+1},...,o_n)$ is a pre-fan of parameter $r$ 
centered at $v$. Otherwise $(v, o_1,...,o_{i-1},o_i',o_j,...,o_n)$ is a pre-fan of parameter 
$r$ 
centered at $v$. We refer to this new pre-fan as the \emph{reduction} of the pre-fan $(v, 
o_1,...,o_n)$ at the vertex $x$ along the indices $i$ and $j$. Clearly a reduction of a pre-fan 
$\Fcal'$ has the same start and end as $\Fcal'$ and is contained in $\Fcal'$.
 
Fans are fixed points for the reduction operation. In fact, it will follow from this proof that 
they 
are the only fixed points. This might suggest the following proof strategy.

\begin{comment}

they are the only pre-fans that are 
invariant under all reduction operations, as follows.

\begin{sublem}\label{invariant}
 Let $\Fcal=(v, o_1,...,o_n)$ be a pre-fan that is equal to all its reductions, then this pre-fan 
is a fan. 
\end{sublem}

\begin{proof}
If two cycles $o_i$ and $o_j$ intersect at a point $x$ different from $v$, then the reduction of 
$\Fcal$ at $x$ along $i$ and $j$ is different from $\Fcal$. 
For $i\in [n-1]$, let $R_i$ be a maximal subpath of the oriented cycle $o_i$ ending at the vertex 
$v$ such that (the reverse of the directed path) $R_i$ is a subpath of the oriented cycle $o_{i+1}$.
Let $L_{i+1}=R_i$. Let $M_i$ be the subpath of the oriented cycle $o_i$ from the last vertex of the 
path $L_i$ to the first vertex of the path $R_i$. 
If the cycle $o_i$ intersects the cycle $o_{i+1}$ at any vertex $x$ outside the path $R_i$, then 
the reduction of the pre-fan $\Fcal$ is different from $\Fcal$.

As $\Fcal$ is invariant under reductions by assumption, the pre-fan $\Fcal$ satisfies the two 
properties of fans, and thus is a fan. 
\end{proof}
\end{comment}

Given a pre-fan, we shall iteratively apply 
reductions to it. We shall show that this procedure eventually stops, and that when it stops we 
 have reduced the pre-fan to a fan.

\begin{sublem}\label{reduction_type_1}
Assume $(v,\hat o_1, ...\hat o_m)$ is obtained from $(v,o_1, ...o_n)$ by a reduction at a vertex 
$x$ along indices $i<j-1$. Then $n>m$. 
\end{sublem}
\begin{proof}
By construction $m=n-(j-i+1)$. 
\end{proof}

By \autoref{reduction_type_1}, each time we perform a reduction along indices with distance at 
least two, the number of cycles shrinks. Hence eventually such reductions must be no longer 
possible; that is, cycles $o_i$ and $o_j$ with $i<j-1$ can only intersect in the vertex $v$. 
For $i\in [n-1]$, let $R_i$ be a maximal subpath of the oriented cycle $o_i$ ending at the vertex 
$v$ such that (the reverse of the directed path) $R_i$ is a subpath of the oriented cycle $o_{i+1}$.
Let $L_{i+1}=R_i$. Let $M_i$ be the subpath of the oriented cycle $o_i$ from the last vertex of the 
path $L_i$ to the first vertex of the path $R_i$. 

We have shown that the cycle $o_i$ cannot intersect the cycle $o_{i+1}$ in the subpath $L_i-v$ (as 
this would intersect the cycle $o_{i-1}$). If they intersect in an interior point of the path 
$M_i$, applying a reduction at this point increases the length of the path $R_i$ but leaves all  
paths $R_k$ with $k\neq i$ invariant. Also cycle length cannot increase during such a reduction. 
Hence we can only perform a bounded number of such reductions. After such reductions are no 
longer possible, the intersection of the cycles $o_i$ and $o_{i+1}$ is precisely $R_i$. 
Doing this analysis for all indices $i\in [n-1]$ yields that we have a fan.

To see the `Moreover'-part note that under these assumptions our constructions ensure that the 
first piece never contains the ending edge and the last piece never contains the starting edge. 
Thus 
the final fan needs to have at least two pieces. 
\end{proof}

Roughly speaking, the next lemma gives conditions under which pre-fans exist.

\begin{lem}\label{pre-fan-exists}
Let $G$ be a (weighted) $r$-locally $3$-connected graph. 
Assume there is a cycle $o'$ containing vertices $v_0$ and $v_1$ that are not adjacent on the 
cycle $o'$. Assume $o'$ 
includes a shortest path $P$ from $v_0$ to $v_1$.

 There is a pre-fan of 
parameter $r$ centered at some vertex $v_i$ none of whose pieces contains the vertex $v_{i+1}$ (for 
some $i\in \Fbb_2$). And there is a cycle $o$ of length at most $r$ including $P$ such that the 
start and end of the pre-fan are on the cycle $o$. 

Moreover, if $o\neq o'$, then  $o$ is geodesic.
\end{lem}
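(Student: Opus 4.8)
The plan is to decompose the cycle $o'$ into short cycles, read off a rotation of those cycles around one of the two hub vertices $v_0,v_1$ in order to build the pre-fan, and to take for $o$ either $o'$ itself or a shorter cycle through $P$ that is thrown up during the argument. I would organise everything as an induction on the length of $o'$, with base case ``$o'$ is geodesic of length at most $r$''.

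\textbf{The pre-fan.} Write $f_0,f_1$ for the two edges of $o'$ at $v_0$, with $f_0\se P$, and $g_0,g_1$ for the two edges of $o'$ at $v_1$, with $g_0\se P$. By {\cite[\autoref*{cycle_gen}]{{loc2sepr}}} and {\cite[\autoref*{gen}]{{loc2sepr}}} the cycle $o'$ is an $\Fbb_2$-sum $C_1+\dots+C_m$ of geodesic cycles, each of length at most $r$ (and, when $o'$ is not geodesic, one may in addition take all the $C_t$ strictly shorter than $o'$, by iterating shortcuts). Now form the auxiliary multigraph $M$ whose vertices are the edges of $G$ at $v_0$ and in which each $C_t$ through $v_0$ contributes one edge joining its two $v_0$-edges. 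Since $o'=\sum_t C_t$, an edge at $v_0$ lies on $o'$ precisely when it lies on an odd number of the $C_t$, so exactly $f_0$ and $f_1$ have odd degree in $M$; hence they lie in a common component and there is a path $f_0=h_0,h_1,\dots,h_k=f_1$ in $M$. Orienting the cycle $C_{t_j}$ of the $j$-th step so that it enters $v_0$ along $h_{j-1}$ and leaves along $h_j$ makes $(v_0,C_{t_1},\dots,C_{t_k})$ a pre-fan of parameter $r$ centred at $v_0$ with start $f_0$ and end $f_1$; both of these lie on $o'$. The symmetric construction at $v_1$ produces a pre-fan from $g_0$ to $g_1$.

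\textbf{Missing the companion hub, and the cycle $o$.} If all pieces of the $v_0$-pre-fan avoid $v_1$ we are essentially done with $i=0$: the pre-fan has the required form and $f_0,f_1\se o'$, and if moreover $|o'|\le r$ we may take $o=o'$. Otherwise some piece $C$ meets both $v_0$ and $v_1$; then $C$ is geodesic of length at most $r$, its two arcs between $v_0$ and $v_1$ have lengths $d(v_0,v_1)=|P|$ and $|C|-|P|\ge 2$, and either $P$ is already an arc of $C$ (so $C\supseteq P$) or replacing the longer arc by $P$ gives a cycle $\hat o\supseteq P$ with $|\hat o|\le|C|\le r$. In either case I restart the construction with this new cycle ($C$, resp.\ $\hat o$) in place of $o'$, keeping the same $v_0,v_1,P$; when $o'$ is not geodesic the new cycle has length at most $|C|<|o'|$, so the recursion descends, and in the base case ``$o'$ geodesic, $|o'|\le r$'' one argues, using that $G$ is $r$-locally $3$-connected and $P$ is a shortest path, that there is a hub whose pre-fan avoids its companion, so $o=o'$ works. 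The geodesicity clause then comes for free: whenever the output $o$ differs from the original $o'$ it was produced in the base case of a deeper call and hence is geodesic.

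I expect the main obstacle to be exactly this coupling: pinning down a hub $v_i$ \emph{all} of whose pre-fan pieces avoid $v_{i+1}$ while \emph{simultaneously} exhibiting a length-$\le r$ cycle through $P$ that is geodesic unless it equals $o'$. The two fiddly points are (a) showing the recursion terminates rather than cycling among several geodesic cycles of equal length through $P$, which seems to require a complexity measure finer than $|o'|$ together with a careful, non-greedy choice of which offending piece to reroute through; and (b) the base-case claim that a genuinely $r$-locally $3$-connected graph cannot force \emph{both} hubs' rotations to hit their companions -- this is where $r$-local $3$-connectivity and the hypothesis that $v_0,v_1$ are non-adjacent on $o'$ and joined along $o'$ by a \emph{shortest} path really do the work, since a shortest $P$ is precisely what rules out the shortcuts that would otherwise keep producing fresh bad cycles.
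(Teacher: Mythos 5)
Your construction of the pre-fan itself (decompose into geodesic cycles of length at most $r$, pass to the auxiliary multigraph at the hub, and use the parity of the degrees of the two $o'$-edges at $v_0$ to find a path from $f_0$ to $f_1$) is sound as far as it goes, and it is close in spirit to the paper's auxiliary graph $A_0$. But the two points you yourself flag as ``fiddly'' are exactly the content of the lemma, and your proposal does not supply them: you have no argument that the restart recursion terminates (equal-length geodesic cycles through $P$ are a real obstruction, and no finer complexity measure is exhibited), and the base case -- that for a geodesic $o'$ of length at most $r$ at least one hub admits a pre-fan all of whose pieces avoid the companion hub -- is simply asserted. Since every branch of your argument funnels into that base case, the proof as written establishes nothing beyond the (easy) existence of some pre-fan at $v_0$.

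The paper avoids the recursion altogether by two devices you are missing. First, it does not decompose $o'$: it decomposes a \emph{valid} cycle (one through $v_0$, avoiding $v_1$, containing both $o'$-edges at $v_0$, obtained from a path in the punctured explorer-neighbourhood, \autoref{valid_exists}) into \emph{friendly} cycles -- geodesic, of length at most $r$, and containing the whole path $P$ whenever they contain both $v_0$ and $v_1$ (\autoref{is_friendly}; this uses that $P$ is shortest to reroute any generator through $P$). Second, writing $\Dcal$ for the friendly generators through both hubs, the parity of $|\Dcal|$ decides at which hub the pre-fan is centred (Case 0 versus Case 1), the pieces are taken only from $\Ccal\sm\Dcal$, so they avoid the companion hub by construction, and the auxiliary-graph path is stopped at the first vertex of the form $d\sm P$ with $d\in\Dcal$ or $o'\sm P$ (\autoref{path_exists0}): the cycle $d$ (or $o'$) then \emph{is} the required cycle $o$ through $P$ carrying the start and end, and geodesicity of $o$ when $o\neq o'$ is automatic because friendly cycles are geodesic. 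In other words, the cycles through both hubs are not obstacles to be eliminated by restarting; they are the candidates for $o$. Without some substitute for this mechanism, your proposal has a genuine gap at its central step.
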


\begin{rem}
 If all edges of the graph $G$ have length one, then the vertices $v_0$ and $v_1$ cannot be 
adjacent in the graph $G$. Indeed, then the path $P$ would have length one and thus $v_0$ and 
$v_1$ would be adjacent on the cycle $o'$.  
\end{rem}

\begin{proof}[Proof of \autoref{pre-fan-exists}.]
A cycle $R$ of the explorer-neighbourhood 
$\expl(v_0,v_1)$ is \emph{valid} if it contains the vertex $v_0$ but not the vertex $v_1$, and  it 
contains the two neighbours of $v_0$ on $o'$.

\begin{sublem}\label{valid_exists}
 There is a valid cycle. 
\end{sublem}

\begin{proof}
As the graph $G$ is $r$-locally $3$-connected, the punctured explorer-neighbourhood  
$\expl(v_0,v_1)-v_0-v_1$ is connected. So there is a path $Q$ of $\expl(v_0,v_1)-v_0-v_1$ joining 
the two neighbours of the vertex $v_0$ on the cycle $o'$, which by assumption are both different 
from the vertex $v_1$. 
Then $Q+v_0$ is a valid cycle. 
\end{proof}

A cycle is \emph{friendly} if it has
length at most $r$, and if it contains both vertices $v_i$, then it has the path $P$ as 
a subpaths; moreover we require that it is geodesic.

\begin{sublem}\label{is_friendly}
 Any valid cycle has a generating set of friendly cycles. 
\end{sublem}

\begin{proof}
Let $Q'$ be a valid cycle. By 
{\cite[\autoref*{cycle_gen}]{{loc2sepr}}} and {\cite[\autoref*{gen}]{{loc2sepr}}} $Q'$ is 
generated by cycles of length at most 
$r$. 
We shall show by induction that any cycle $x$ of length at most $r$ has a generating set of 
friendly cycles. 
If $x$ is not geodesic, it is generated by two shorter cycles, which by induction have a generating 
set of friendly cycles. 
Hence we may assume, and we do assume, that the cycle $x$ contains the vertices  $v_0$ and 
$v_1$ but not the path $P$.
Denote by $X_1$ and $X_2$ the two subpaths of the cycle $x$ from $v_0$ to $v_1$. As $P$ is a 
shortest path 
from $v_0$ to $v_1$, the closed walks $PX_1$ and $PX_2$ both have length at most the length of 
$x$. 
If one closed walk $PX_i$ is not a cycle, it is an edge-disjoint union of cycles,
and all cycles in this union have strictly fewer edges than the original cycle $x$.
If $PX_i$ is a non-geodesic cycle, it is generated by two shorter cycles.
To summarise, either $PX_i$ is a friendly cycle or by induction it is generated by a set of 
friendly cycles.
Thus we have shown that any cycle $x$ in the generating set for the valid cycle $Q'$ has a 
generating set of friendly cycles, so $Q'$ has a generating set of friendly cycles.
\end{proof}

By \autoref{valid_exists} and \autoref{is_friendly}, there is a valid cycle with a 
generating set of friendly cycles.
Call such a valid cycle $R$. Denote a generating set of $R$ consisting of friendly cycles by 
$\Ccal$. 

By $\Dcal$ we denote the set of all $c\in \Ccal$ 
that contain both vertices $v_0$ and $v_1$. 
We distinguish two cases.

{\bf Case 0:} $|\Dcal|$ is even. 

Now define the following auxiliary graph. Its vertex set are the cycles of $\Ccal\sm \Dcal$ that 
contain the vertex $v_0$, 
the two subpaths of $o'$ between $v_0$ 
and $v_1$, and for each $d\in \Dcal$ we take the $v_0$-$v_1$-path $d\sm P$. 
We denote the set of these paths by $\Dcal'$. 
We add an 
edge 
between any two vertices of the auxiliary graph whose corresponding cycles or paths share an edge 
incident with the vertex $v_0$. 
We refer to this auxiliary graph as $A_0$. 

\begin{sublem}\label{path_exists0}
 In the graph $A_0$, there is a path from the vertex $P$ to some other vertex in $\Dcal'\cup\{o'\sm 
P\}$. 
\end{sublem}

\begin{proof}
Let $K$ be the component of the graph $A_0$ containing the vertex 
$P$. Denote by $e$ the edge of $P$ incident with its endvertex $v_0$.
Consider the sum $S$ over all $c\in \Ccal\sm \Dcal$ that are in the component $K$ (over 
$\Fbb_2$). This sum has even degree at every vertex -- in particular the vertex $v_0$. 
As $|\Dcal|$ is even, and the sum over all $c\in \Ccal$ is nontrivial at the edge $e$, also the 
sum $S$ is nontrivial at the edge $e$. So there must be another edge incident with the vertex $v_0$ 
at 
which the sum $S$ is non-zero. Denote such an edge by $f$. 

If the edge $f$ is in the cycle $R$, then $f$ is on the cycle $o'$. As the edge $f$ is not on the 
path $P$, it must be on the path $o'\sm P$. Thus the vertex $o'\sm P$ of the graph $A_0$ would be 
in the 
component $K$, as desired. Otherwise the edge $f$ is not on $R$. So it must be on some $d\in \Dcal$.
As the edge $f$ is not on the 
path $P$, it must be on the path $d\sm P$. Thus the vertex $d\sm P$ of $A_0$ would be in the 
component $K$, which completes the proof. 
\end{proof}

By \autoref{path_exists0}, there is a path $Q$ in the graph $A_0$ from $P$ to some vertex in 
$\Dcal'\cup\{o'\sm 
P\}$. 
We pick a shortest such path. Then all interior vertices have their associated cycles in the set 
$\Ccal\sm \Dcal$. 
These cycles form the 
pieces of a pre-fan centered at the vertex $v_0$ of parameter $r$. Denote that pre-fan by $F$. 
If the endvertex of the path $Q$ is $o'\sm P$, we pick $o=o'$. Otherwise there is some $d\in \Dcal$ 
such that $d\sm P$ is the endvertex of the path $Q$. We pick $o=d$. By construction, the path $P$ 
is included in the cycle $o$, and this cycle has length at most $r$. Also the starting edge and 
the ending edge of the pre-fan $F$ are on $o$.  
As none of the cycles of the pre-fan contains the vertex $v_1$ by construction, this completes 
this case. 

Moreover, if $o\neq o'$, then $o\in \Ccal$ and so it is geodesic. 

{\bf Case 1:} $|\Dcal|$ is odd. 
This case is somewhat similar to Case 0 with the roles of `$v_0$' and `$v_1$' interchanged.

The graph $A_1$ is defined like the graph $A_0$ with the vertex `$v_1$' in place of the vertex 
`$v_0$'. We define $\Dcal'$ as in Case 0. 
Similarly as \autoref{path_exists0} one proves the following.

\begin{sublem}
 In the graph $A_1$, there is a path from the vertex $P$ to some other vertex in $\Dcal'\cup\{o'\sm 
P\}$. 
\end{sublem}

\begin{proof}
This is the same as the proof of \autoref{path_exists0} with `$v_1$' in place of `$v_0$' and 
`$A_1$' 
in 
place of `$A_0$', and `odd' in place of `even'. In particular, we argue as follows: `As $|\Dcal|$ 
is odd, and the sum over all $c\in \Ccal$ is trivial at the edge $e$, the 
sum $S$ is nontrivial at the edge $e$.' Indeed, unlike in Case 0, here $e\notin R$. 
\end{proof}

The rest of this case is the same as in Case 0 with `$v_1$' in place of `$v_0$' and `$A_1$' in 
place of `$A_0$'.

\end{proof}

\subsection{Detecting bounded subdivisions of the 3-wheel or the 4-wheel}\label{det:bd_wheel}

The purpose of this subsection is to prove \autoref{theta_short-path} stated below.

A \emph{theta-graph} (of parameter $r$) is a graph $\theta$ consisting of two vertices $v$ and 
$w$ and three internally disjoint paths between them such that at least two of the cycles obtained 
by concatenating a pair of these paths have length at most $r$. The vertices $v$ and $w$ are 
referred to as the \emph{branching vertices} of $\theta$. The three paths from $v$ to $w$ are 
referred 
to as \emph{arms}, see \autoref{fig:theta}.

   \begin{figure} [htpb]   
\begin{center}
   	  \includegraphics[height=3cm]{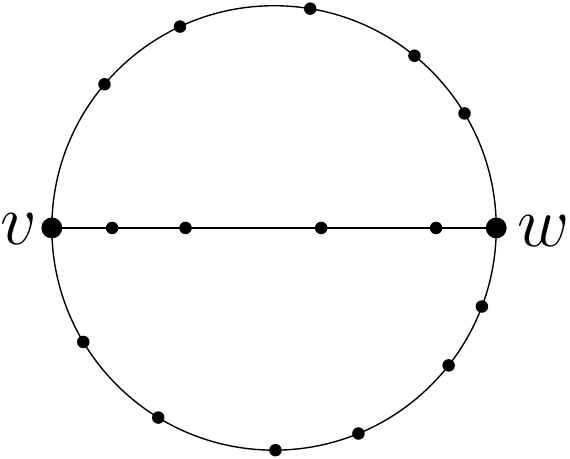}
   	  \caption{A theta-graph with branching vertices $v$ and $w$. }\label{fig:theta}
\end{center}
   \end{figure}

\begin{lem}\label{theta_short-path}
Let $G$ be a graph that contains a theta-graph $\theta$ of parameter 
$r$ such 
that there is a cycle $o$ of length at most $r$ containing a branching vertex of $\theta$ and a 
path $P$ between interior vertices of different arms of $\theta$ that avoids the branching vertices 
of 
$\theta$.
Then $G$ contains an $r$-local subdivision of the $3$-wheel or the $4$-wheel.
\end{lem}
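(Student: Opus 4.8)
The plan is to use the path $P$ and the cycle $o$ to carve out a $K_4$- or $K_5$-type structure from the theta-graph $\theta$, and then turn that structure into a bounded (hence $r$-local) subdivision of the $3$-wheel or $4$-wheel. Let the arms of $\theta$ be $A_1, A_2, A_3$, with branching vertices $v$ and $w$, and say $P$ runs from an interior vertex $p\in A_i$ to an interior vertex $q\in A_j$ with $i\neq j$, avoiding $v$ and $w$. The vertices $p$ and $q$ split $A_i$ into $A_i'$ (the $v$-side) and $A_i''$ (the $w$-side), and similarly for $A_j$. First I would observe that $\theta\cup P$ already contains several cycles; the point is to select a sub-collection of the arm-segments, the third arm $A_k$, and the path $P$ so that the resulting graph is a subdivision of $K_4$ or $K_5$ in which every face (piece) is short, i.e.\ has length at most $r$.

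\medskip

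The key steps, in order, are as follows. \emph{Step 1:} Record which cycles of the configuration are known to be short. By definition of a theta-graph, at least two of the three cycles $A_aA_b$ (for $\{a,b\}\subseteq\{1,2,3\}$) have length at most $r$; the hypothesis gives us in addition a cycle $o$ of length at most $r$ through a branching vertex. \emph{Step 2:} Use $P$ to subdivide the short theta-cycles. Concatenating $P$ with the appropriate arm-segments produces smaller cycles; the crucial estimate is that if a cycle $C$ of length $\le r$ contains both $p$ and $q$, and $P$ is a path from $p$ to $q$, then at least one of the two cycles into which $P$ cuts $C$ has length $\le r$ (since their lengths sum to $|C|+2|P|$ — here one must be careful, so instead I would choose $P$ to be a \emph{shortest} $p$--$q$ path, or replace it by a shortest one inside $\theta\cup P$, so that $P$ is no longer than either $p$--$q$ subpath of $C$, forcing \emph{both} pieces to have length $\le |C|\le r$). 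This is the same device used in \autoref{K4minus} and in the proof of \autoref{dichotomy}. \emph{Step 3:} Assemble the pieces. Depending on which two theta-cycles are short and on the position of the branching vertex carried by $o$, the short cycles produced in Step 2, together with $o$, cover all the faces of a plane subdivision of $K_4$ (if $P$ together with an arm and the segments bounds a single extra region) or of $K_5$ / the $4$-wheel (if two extra short cycles arise). \emph{Step 4:} Conclude that the pieces of this wheel-subdivision all have length at most $r$, so by the Example preceding \autoref{bounded2} (every subdivision of a wheel with an $r$-explicit generating set, and in particular every $r$-bounded one, is $r$-local) the subgraph is an $r$-local subdivision of the $3$-wheel or $4$-wheel, as required.

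\medskip

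The main obstacle I anticipate is a case analysis in Step 3: the hypothesis only guarantees \emph{some} cycle $o$ through \emph{a} branching vertex, and \emph{some} path $P$ between \emph{some} pair of arms, so one has to check that in every combination of (which two of the three theta-cycles are short) $\times$ (which branching vertex $o$ passes through) $\times$ (which two arms $P$ joins), the short cycles one can extract really do tile a $3$- or $4$-wheel and not some other theta-like graph with no short central vertex. A secondary subtlety, flagged above, is the length bookkeeping when splitting a short cycle by $P$: one cannot split by an arbitrary $P$ and keep both halves short, so the argument must first shorten $P$ (within the ambient configuration) to a geodesic between its endpoints before doing any splitting, exactly as in \autoref{K4minus}. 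Once those two points are handled, the rest is routine concatenation of paths and verification of piece-lengths.
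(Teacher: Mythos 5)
There is a genuine gap, and it is precisely where your plan says ``assemble the pieces''. Your Steps 2--4 treat the configuration as if it were just $\theta\cup P$ with the short cycle $o$ sitting nicely on top of it, but the hypothesis only says that $o$ is a short cycle containing a branching vertex and containing $P$; outside of $P$ the cycle $o$ may meet $\theta$ in many further arcs, both additional bridges between arms and detours that re-enter the same arm, and none of the individual segments of $o$ has controlled length. In that situation the ``short cycles'' you propose to extract in Step 2 need not exist: the two short theta-cycles need not contain both endvertices $p$ and $q$ of $P$ (e.g.\ the short cycles can be $A_1A_2$ and $A_1A_3$ while $P$ joins $A_2$ to $A_3$), so there is nothing to split, and the only short cycle through $P$ is $o$ itself, whose interaction with $\theta$ is exactly the uncontrolled part. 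Your fallback of replacing $P$ by a geodesic is also unsafe: a shortest $p$--$q$ path may pass through a branching vertex or run along an arm, destroying the hypothesis; and the example after \autoref{theta_short-path} (\autoref{fig:not3con}) shows the conclusion really does depend on having a short cycle through a branching vertex, so any argument that throws away the global structure of $o$ and keeps only a geodesic $p$--$q$ connection cannot succeed. The paper's proof spends essentially all of its effort on exactly this missing point: it classifies the arcs of $o$ relative to $\theta$ into bridges and detours, shows via primary/secondary bridges (\autoref{desired_scool}, \autoref{primary_school}) that one may pass to a short cycle through $v$ with a single bridge, and then eliminates detours by an iterative improvement of the pair $(\theta,o)$ (\autoref{detour_nice}, \autoref{single_school}), landing in the explicitly defined classes $\Wcal$ and $\Wcal^*$.

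A second, smaller but real, problem is your Step 4 target. You aim to make \emph{all pieces} of the wheel short, i.e.\ to produce an $r$-bounded subdivision, but the hypotheses do not yield this in general and the lemma does not claim it: the conclusion is only $r$-locality, and the paper obtains it not by bounding every piece but by exhibiting a small generating set of short cycles -- a spanning tree whose fundamental cycles are short in the $K_4$ case (\autoref{auxi1}), and the four explicit cycles $o_1,o_2,o_3,u$ in the $4$-wheel case (\autoref{auxi2}). If you insist on all pieces being short you will get stuck in configurations where one arm of $\theta$ is long (the third theta-cycle may have length up to almost $2r$), whereas the generating-set route goes through. So the proposal needs both the bridge/detour reduction machinery and a weakening of the final verification step before it can be completed.
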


\begin{eg}
 The assumption that the cycle $o$ contains a branching vertex cannot be omitted. An example 
demonstrating this is depicted in \autoref{fig:not3con}. 
Indeed, this graph has a theta-subgraph of parameter $r$, whose two branching vertices are denoted 
by $x$ and $y$ in the figure. This subgraph is obtained from the graph depicted by deleting the 
two edges of length $r'=(r/2)-1$ that are incident with neither $x$ nor $y$. This graph does not 
contain an $r$-local subdivision of a wheel as the graph obtained by $r$-locally cutting at the 
$r$-local $2$-separator $\{x,y\}$ is a series-parallel graph. Hence this graph has a 
graph-decomposition of locality $r$ and width two. 
\end{eg}

      \begin{figure} [htpb]   
\begin{center}
   	  \includegraphics[height=3cm]{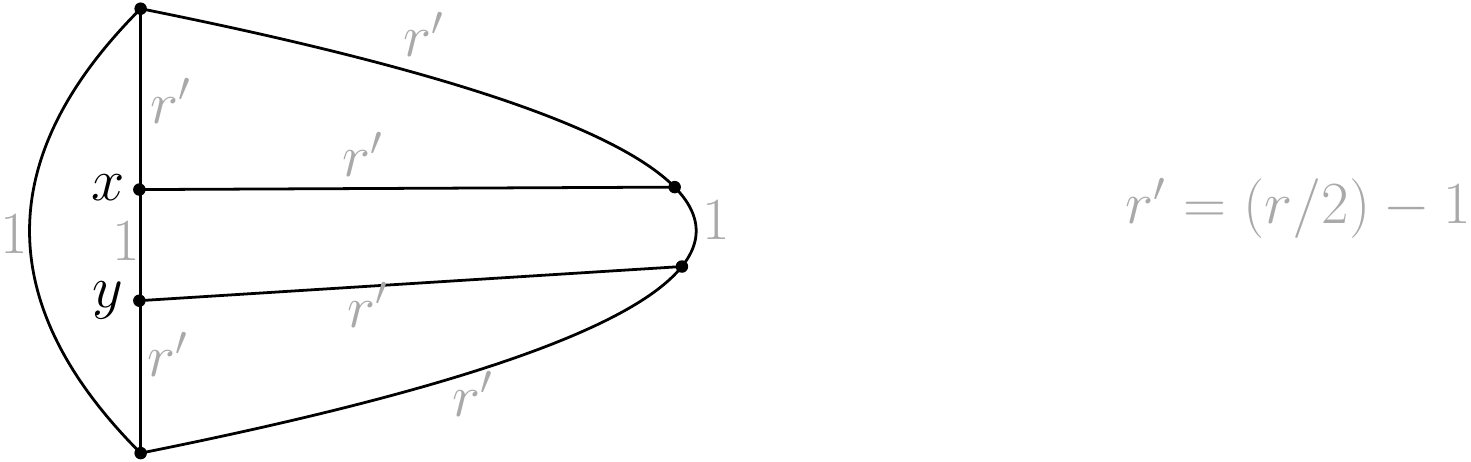}
   	  \caption{A graph that is 3-connected but not $r$-locally $3$-connected as 
can be seen by considering the explorer-neighbourhood $\expl(x,y)$.   
   	  Lengths of edges are given in grey.}\label{fig:not3con}
\end{center}
   \end{figure}

Instead of proving \autoref{theta_short-path} directly, we shall deduce it from a variant, 
\autoref{theta_short-path_tech} stated below. Next we develop the context of 
\autoref{theta_short-path_tech}.
   
A graph $G$ is in the class $\Wcal$ if it is obtained from a theta-graph $\theta$ of parameter 
$r$ by attaching\footnote{Here \emph{attaching} a path means that we add this path disjointly 
and then identify its endvertices as prescribed.} a path $P$ at interior 
vertices of different arms of the theta-graph $\theta$ such that $G$ contains a cycle of length at 
most $r$ including $P$. 
A \emph{weighted suppression} of a graph is obtained by
iteratively suppression vertices of degree two. Here the length of the suppression edge is the sum 
of the length of the two edges incident with the suppressed vertex.

\begin{lem}\label{auxi1}
Any graph in $\Wcal$ is a subdivision of the graph $K_4$ such that there is a spanning tree all 
whose fundamental cycles have length at most $r$.

In particular, weighted suppressions of graphs in $\Wcal$ are $r$-locally $3$-connected.
\end{lem}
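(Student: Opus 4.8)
The plan is to read the first assertion directly off the definition of $\Wcal$, reduce the spanning-tree assertion to a small combinatorial question about the six branch-paths of $G$, and settle that by a short case analysis whose only delicate point is one configuration. So let $\theta$ have branching vertices $v,w$ and arms $A_a,A_b,A_c$, and let $P$ be attached at an interior vertex $p_1$ of $A_a$ and an interior vertex $p_2$ of $A_b$ with $a\neq b$. Then $p_1\neq p_2$, the path $P$ is internally disjoint from $\theta$, and in $G$ exactly the vertices $v,w,p_1,p_2$ have degree three while all others have degree two; the six internally disjoint paths joining them --- the halves $e_1=vp_1,\,e_2=p_1w$ of $A_a$, the halves $e_3=vp_2,\,e_4=p_2w$ of $A_b$, the arm $e_5=vw$ ($=A_c$), and $e_6=p_1p_2$ ($=P$) --- exhibit $G$ as a subdivision of the complete graph on $\{v,w,p_1,p_2\}$; this is the first assertion. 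Write $|e_i|$ for the length of the $i$-th branch-path. Every spanning tree of $G$ is obtained from a spanning tree $T_0$ of this $K_4$ by deleting one edge from each branch-path whose $K_4$-edge lies outside $T_0$, and its three fundamental cycles are exactly the subdivisions of the fundamental cycles of $T_0$ --- of lengths equal to the corresponding sums of the $|e_i|$. Using also that three cycles of $G$, each containing an edge lying in neither of the other two, are the fundamental cycles of the spanning tree obtained by deleting those three ``private'' edges (recall the cycle space of $G$ is $3$-dimensional), the whole task reduces to exhibiting three cycles of $G$ of length at most $r$, each having an edge in none of the other two.

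By hypothesis at least two of the theta-cycles $C_{ab}=\{e_1,e_2,e_3,e_4\}$, $C_{ac}=\{e_1,e_2,e_5\}$, $C_{bc}=\{e_3,e_4,e_5\}$ have length at most $r$, and $G$ contains a cycle $C_P$ of length at most $r$ through $P$; viewed in the underlying $K_4$ this is one of the four cycles through $e_6$, namely $\{e_1,e_3,e_6\}$, $\{e_2,e_4,e_6\}$, $\{e_1,e_4,e_5,e_6\}$ or $\{e_2,e_3,e_5,e_6\}$. Interchanging $A_a$ with $A_b$ (which fixes $C_{ab}$ and swaps $C_{ac},C_{bc}$) and interchanging $v$ with $w$ (which fixes all theta-cycles and swaps the two quadrilateral $C_P$'s) leaves two cases: ``$C_{ac},C_{bc}$ short'' and ``$C_{ab},C_{ac}$ short''. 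In the former for every possible $C_P$, and in the latter when $C_P$ is a triangle, a direct check shows that the two short theta-cycles together with $C_P$ already have the private-edge property. The remaining case --- $C_{ab}$, $C_{ac}$ and $C_P=\{e_1,e_4,e_5,e_6\}$ all short --- is the crux, since this triple admits no private-edge assignment; there I would produce a fourth short cycle. If both $C_{bc}=\{e_3,e_4,e_5\}$ and the triangle $\{e_2,e_4,e_6\}$ had length more than $r$, then $C_{bc}$ together with $C_{ab}$ would force $|e_5|>|e_1|+|e_2|$, while $\{e_2,e_4,e_6\}$ together with $C_P$ would force $|e_2|>|e_1|+|e_5|$, a contradiction; hence one of $C_{bc}$, $\{e_2,e_4,e_6\}$ is short as well, and in either case a suitable triple among the short cycles has the private-edge property, completing the construction of the spanning tree. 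I expect this exceptional configuration to be the main obstacle; all the other steps are routine.

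Finally, for the ``in particular'' part: a weighted suppression of $G$ is the weighted $K_4$ whose edge-lengths are the $|e_i|$, and its three fundamental cycles relative to the image of the spanning tree just built generate its cycle space and each has length at most $r$; so re-expanding the edges into paths of their lengths yields an $r$-local subdivision of a wheel, whence the suppression is an $r$-weighted wheel (the $3$-wheel) and \autoref{are-3-con} shows it is $r$-locally $3$-connected.
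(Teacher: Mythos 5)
Your proposal is correct, but it takes a genuinely different route from the paper -- and, notably, a more careful one. The paper settles the spanning-tree claim in two sentences: take a spanning tree of the theta-graph whose fundamental cycles are the two short theta-cycles, then ``extend it arbitrarily'' to a spanning tree of $G$ by adding edges of $P$. That controls the two old fundamental cycles (they are unchanged by the extension) but says nothing about the fundamental cycle of the one leftover edge of $P$, which is $P$ together with the tree-path between its attachment vertices and need not be the short cycle through $P$ guaranteed by the definition of $\Wcal$. Your ``crux'' configuration is exactly where this matters: for instance with $|e_1|=|e_4|=|e_5|=1$, $|e_2|=|e_3|=3$, $|e_6|=6$ and $r=9$, every theta-tree has short fundamental cycles, yet both triangles through $P$ and the quadrilateral through $e_2,e_3,e_5,e_6$ have length $10$ or $13$, so an unlucky (but ``arbitrary'') extension yields a long fundamental cycle; a good spanning tree exists (break $e_2$, $e_3$, $e_6$), but one has to choose it. Your argument supplies precisely what is missing: the reformulation via three short cycles with pairwise private edges (which is a correct characterisation of fundamental-cycle triples here), the symmetry reduction, and, in the single problematic case, the forcing step where assuming both $C_{bc}$ and the triangle $e_2e_4e_6$ are long gives $|e_5|>|e_1|+|e_2|$ and $|e_2|>|e_1|+|e_5|$, a contradiction -- all of which I checked and which does produce a private-edge triple in every case. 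So your proof buys completeness and an explicit verification at the cost of a case analysis, while the paper's sketch buys brevity but glosses over exactly the configuration you isolate. Your handling of the ``in particular'' part via \autoref{are-3-con} is essentially equivalent to the paper's, which invokes \autoref{gen_loc_con} directly on the suppressed weighted $K_4$.
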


\begin{proof}
Clearly any graph in the class $\Wcal$ is a subdivision of the graph $K_4$. It remains to 
construct a spanning tree such that all its fundamental cycles have length at most $r$. 
Start with a spanning tree of the theta-graph such that all its fundamental cycles have length at 
most $r$. Now extend it arbitrarily to a spanning tree of the whole graph by adding edges. 

The `In particular'-part follows from the fact that the fundamental cycles of any spanning tree 
generate all cycles and \autoref{gen_loc_con}. 
\end{proof}

A graph $G$ is in the class $\Wcal^*$ if it is obtained from a theta-graph $\theta$ of parameter 
$r$ by attaching a path $P$ at interior 
vertices of different arms of the theta-graph $\theta$ and another path $Q$ at a branching vertex 
and some interior vertex of an arm that also contains a vertex of $P$ such that in this arm this 
endvertex of $P$ is in between the two endvertices of $Q$
-- in such a way that $G$ contains a cycle of length at 
most $r$ including both $P$ and $Q$, see \autoref{fig:construct_spanning_tree}. 

\begin{lem}\label{auxi2}
Any graph in $\Wcal^*$ is a subdivision of the $4$-wheel and all its cycles are generated by 
cycles of length at most $r$.

In particular, the weighted suppressions of graphs in $\Wcal^*$ are $r$-locally 
$3$-connected. 
\end{lem}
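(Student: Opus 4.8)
The plan is to pin down the shape of a graph $G\in\Wcal^*$, then reduce the ``in particular''-part to the generation statement, and finally to assemble a generating set of short cycles; the last of these is where the work lies.

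\textbf{Shape of $G$.} Let $\theta$ be the theta-graph with branching vertices $v,w$ and arms $A_1,A_2,A_3$. After relabelling the arms and, if needed, swapping $v$ and $w$, we may assume $P$ joins an interior vertex $p_1$ of $A_1$ to an interior vertex $p_2$ of $A_2$, and $Q$ joins $v$ to an interior vertex $q$ of $A_1$ with $p_1$ lying strictly between $v$ and $q$ on $A_1$; so on $A_1$ the vertices occur in the order $v,p_1,q,w$. In $G$ the vertex $v$ has degree $4$, the vertices $w,p_1,p_2,q$ have degree $3$, and every other vertex has degree $2$. The eight maximal paths of $G$ joining branching vertices are the three subpaths of $A_1$ cut off by $p_1$ and $q$, the two subpaths of $A_2$ cut off by $p_2$, the arm $A_3$, the path $P$, and the path $Q$; suppressing the degree-two vertices one checks that these form the $4$-wheel with centre $v$, rim $p_1,q,w,p_2$ and the four spokes at $v$. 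Hence $G$ is a subdivision of $W_4$ and its weighted suppression is $W_4$ carrying positive integer edge-lengths.

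\textbf{Reducing the ``in particular''-part.} A weighted suppression induces an isomorphism of cycle spaces that preserves the length of every cycle, so the cycles of length at most $r$ generate the cycle space of the weighted suppression as soon as they generate the cycle space of $G$. As $W_4$ is $3$-connected, \autoref{gen_loc_con} then gives that the weighted suppression is $r$-locally $3$-connected. It therefore remains to show that the cycles of $G$ of length at most $r$ generate its (four-dimensional) cycle space.

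\textbf{Three dimensions for free.} Fix the cycle basis $Z_1=A_1\cup A_3$, $Z_2=A_2\cup A_3$, $Z_3=$ the triangle $vp_1p_2$, $Z_4=$ the triangle $vp_1q$ of $W_4$, regarded as cycles of $G$. By definition of a theta-graph of parameter $r$, two of the three theta-cycles $A_1\cup A_2$, $A_1\cup A_3$, $A_2\cup A_3$ have length at most $r$; since any two of them span $\langle Z_1,Z_2\rangle$, the short cycles span $\langle Z_1,Z_2\rangle$. The cycle $o$ of length at most $r$ through $P$ and $Q$ corresponds in $W_4$ to one of the three cycles using both the rim-edge $P$ and the spoke $Q$, and a short computation shows that each such cycle equals $Z_3+Z_4$ modulo $\langle Z_1,Z_2\rangle$; so the short cycles already span the three-dimensional subspace $\langle Z_1,Z_2,Z_3+Z_4\rangle$. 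Moreover every cycle of $W_4$ using $P$ but not $Q$ is $\equiv Z_3$, and every cycle using $Q$ but not $P$ is $\equiv Z_4$, modulo $\langle Z_1,Z_2\rangle$; hence, in view of the cycle $o$, a single short cycle of $G$ using exactly one of the topological edges $P,Q$ would complete the fourth dimension.

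\textbf{The fourth dimension --- the main obstacle.} What remains, and what I expect to be the real work, is to produce such a short cycle, and this is exactly where the positional hypothesis in the definition of $\Wcal^*$ (that $p_1$ lies between the ends of $Q$ on $A_1$) enters. The idea is a ``swap'': the cycle $o$ runs along the spoke $Q$ from $v$ to $q$, and the subpath of $A_1$ from $v$ to $q$ passes through $p_1$, an end of $P$; replacing the $Q$-portion of $o$ by this subpath of $A_1$ (or, in the symmetric sub-cases, replacing a suitable arc of $o$ by an arc of one of the two short theta-cycles, with which $o$ shares long common subpaths) produces a closed walk which reduces to a union of cycles, at least one of which uses exactly one of $P,Q$ and whose length is controlled by that of $o$ or of the short theta-cycle used. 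The bookkeeping splits according to which two of the three theta-cycles are short and which of the three cycles through $P$ and $Q$ equals $o$; equivalently, one may argue by contradiction, assuming all seven cycles of $W_4$ that use exactly one of $P,Q$ have length greater than $r$ and deriving an infeasible system of linear inequalities in the edge-lengths from the three facts that $o$ and the two short theta-cycles have length at most $r$. Either way this finite case-check --- the $\Wcal^*$-analogue of the spanning-tree construction in \autoref{auxi1}, compare \autoref{fig:construct_spanning_tree} --- yields the missing short cycle and, with the previous paragraph, shows the short cycles span the whole cycle space of $G$.
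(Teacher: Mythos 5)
Your first three paragraphs are fine and match what is needed: the identification of the suppressed graph as the $4$-wheel with centre $v$ and rim $p_1,q,w,p_2$, the reduction of the ``in particular''-part to the generation statement via \autoref{gen_loc_con}, and the observation that the two short theta-cycles together with the short cycle $o$ through $P$ and $Q$ span a three-dimensional subspace, so that one further short cycle using exactly one of $P,Q$ would finish the proof. But that further cycle is exactly the content of the lemma, and you do not produce it: your final paragraph explicitly defers ``the real work'' to an unexecuted case-check. Moreover, the one concrete construction you do propose fails as stated: replacing the $Q$-portion of $o$ by the subpath of $A_1$ from $v$ to $q$ gives a closed walk of length $\ell(o)-\ell(Q)+\ell(A_1[v,q])$, and there is no bound on $\ell(A_1[v,q])$ in terms of $\ell(Q)$ (the arm $A_1$ is only known to lie on \emph{some} cycle of length at most $r$), so this walk can have length close to $2r$. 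The alternative you mention -- deriving a contradiction from a system of inequalities over the seven cycles using exactly one of $P,Q$ -- is likewise not carried out, and verifying its feasibility is precisely the missing argument, not a routine check.

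The missing idea is a halving trick at the two vertices $v$ and $q$ (the endvertex of $Q$ other than $v$). Since $q$ is an interior vertex of the arm $A_1$, at least one of the two short theta-cycles, say $o_1$, passes through $A_1$ and hence contains both $v$ and $q$; the cycle $u$ (your $o$) also contains both, because it contains $Q$. So $o_1$ contains a $v$--$q$ path $R_1$ of length at most $r/2$, and $u$ contains a $v$--$q$ path $R_2$ of length at most $r/2$; note $R_1$ lies in the theta-graph and so avoids $P$ and $Q$, while $R_2$ contains exactly one of $P,Q$ in full (its two $v$--$q$ subpaths are $Q$ and a path through $P$). The closed walk $R_1R_2$ has length at most $r$ and contains a cycle through the copy of $P$ or $Q$ that is traversed exactly once (here one uses that the interior vertices of $P$ and $Q$ have degree two in $G$); this cycle is the fourth short generator, and together with your third paragraph it completes the proof. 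This is, up to presentation, what the paper does: it forms the cycle $o_3\subseteq R_1\cup R_2$ and then checks directly that $o_1,o_2,o_3,u$ generate the cycle space. Without this (or an equivalent) construction, your proposal does not prove the lemma.
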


\begin{proof}
Let $G$ be a graph in the class $\Wcal^*$. Remove the branching vertex of the theta-graph that 
has degree four in $G$. Call that vertex $v$. Then remove all vertices of degree one iteratively. 
The resulting graph is a cycle $o$. In the graph $G$, there are four path from $o$ to $v$ that only 
intersect at the common vertex $v$. Thus $G$ is a subdivision of a $4$-wheel.    

Let $o_1$ and $o_2$ be two cycles of the theta-graph of length at most $r$. Note that $o_1$ and 
$o_2$ together cover all edges of the theta-graph. 
By assumption there is a cycle $u$ of length at most $r$ including the paths $P$ and $Q$. 
Let $x$ be the endvertex of the path $Q$ different from the branching vertex $v$. 
One of the cycles $o_1$ or $o_2$, say $o_1$, contains the vertex $x$. 
So the cycle $o_1$ contains a path of length at most $r/2$ between the vertices $v$ and $x$. Denote 
this path by 
$R_1$. The cycle $u$ contains a path of length at most $r/2$ between $v$ and $x$. Denote that path 
by $R_2$. 
As the path $R_2$ contains precisely one of the paths $P$ and $Q$, it is distinct from the path 
$R_1$. Thus the closed walk obtained by concatenating the paths $R_1$ and $R_2$ includes a cycle. 
denote that cycle by $o_3$.    See \autoref{fig:construct_spanning_tree}.

   \begin{figure} [htpb]   
\begin{center}
   	  \includegraphics[height=4cm]{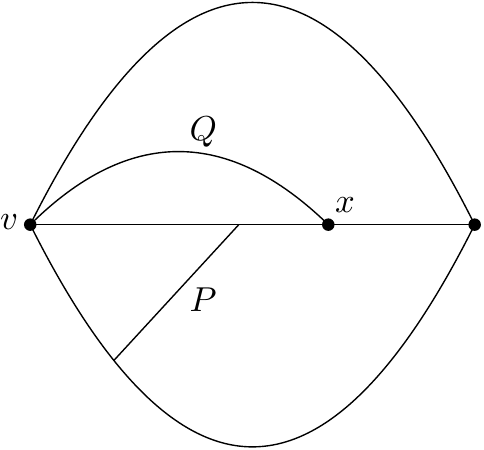}
   	  \caption{A graph in the class $\Wcal^*$. 
}\label{fig:construct_spanning_tree}
\end{center}
   \end{figure}

\begin{sublem}\label{generate}
 The cycles $o_1$, $o_2$, $o_3$ and $u$ generate all cycles of the graph $G$.
\end{sublem}

\begin{proof}
The cycles $o_1$ and $o_2$ generate all cycles of the theta-graph. 
There is a unique arm of the theta-graph containing the two endvertices of the path $Q$.
Let $o_3'$ be the cycle obtained 
from the path $Q$ by joining its two endvertices in that arm. The cycle $o_3'$ is generated by the 
cycles $o_3$, $o_1$ and $u$. Let $u'=u+o_3'$, which is a cycle containing the path $P$ but not the 
path $Q$. 

The cycles $o_1$, $o_2$, $o_3'$ and $u'$ clearly generate all cycles of $G$. Hence $o_1$, $o_2$, 
$o_3$ and $u$ generate all cycles of the graph $G$. 
\end{proof}

The `In particular'-part follows from \autoref{gen_loc_con}. 
\end{proof}

\begin{comment}
 [es ist eine andere Bedingung, dass die kreise beschraenkter Laenge erzeugen als dass es einen 
Spannbaum gibt so dass alle Fundamentalkreise dazu beschraenkte Lange haben. beispiel. Ein Kreis in 
der erzeugermenge ist ueberdeckt von anderen Kreise. Dies kann beim 4-wheel vorkommen, also 
umformulieren. ]
\end{comment}

\begin{lem}\label{theta_short-path_tech}
 Let $G$ be a graph as in \autoref{theta_short-path}.

Then $G$ contains a graph in the class $\Wcal\cup \Wcal^*$.
\end{lem}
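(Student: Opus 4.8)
\textbf{Proof strategy for \autoref{theta_short-path_tech}.}
The plan is to take the theta-graph $\theta$, the cycle $o$, and the path $P$ provided by the hypothesis, and to carve out of $G$ a concrete subgraph lying in $\Wcal \cup \Wcal^*$. First I would record what we have: branching vertices $v,w$ of $\theta$, three arms $A_1,A_2,A_3$ with (say) the cycles $A_1\cup A_2$ and $A_1\cup A_3$ of length at most $r$; a cycle $o$ of length at most $r$ through $v$ or $w$; and a path $P$ joining an interior vertex $p_i$ of some arm to an interior vertex $p_j$ of a different arm, internally disjoint from $\{v,w\}$. The natural candidate for a $\Wcal$-graph is $\theta$ together with $P$, but the obstruction is the requirement in the definition of $\Wcal$ that there be \emph{a single cycle of length at most $r$ that contains all of $P$}; the hypothesis only gives us a short cycle $o$ through a branching vertex, not necessarily one through $P$. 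So the heart of the argument is to manufacture such a short cycle (or, failing that, land in $\Wcal^*$ instead).

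The key steps I would carry out, in order: (1) Fix the two arms $A_a, A_b$ whose endpoints $p_a, p_b$ are joined by $P$, and consider the two cycles of $\theta$ of length at most $r$; by a pigeonhole/case split on which arm is the ``common'' one, arrange that one of these short cycles, call it $o_\theta$, contains a long subpath through $p_a$ and another through $p_b$, so that concatenating appropriate subpaths of $o_\theta$ with $P$ produces a cycle $C_P \supseteq P$. Here I would use that $P$ is a path between interior vertices of two arms, so cutting $o_\theta$ at $p_a$ and $p_b$ splits it into two arcs, exactly one of which together with $P$ forms a cycle; I must then check this cycle has length $\le r$, which follows because we are replacing an arc of $o_\theta$ by a path and, if necessary, taking the shorter side — but this length bound may fail, which forces the $\Wcal^*$ branch. (2) If the ``short-cycle-through-$P$'' construction succeeds, then $\theta \cup P$ (after possibly throwing away the unused third arm, or keeping it — $\Wcal$ only needs $\theta$ plus $P$) is in $\Wcal$, using $C_P$ as the required short cycle, and we are done. (3) If it fails, I would instead use the given cycle $o$ through a branching vertex as the second attached path: the overlap of $o$ with $\theta$ gives a path $Q$ from that branching vertex into the interior of some arm, and I would verify that $Q$ attaches at a branching vertex and an interior vertex of an arm, that this arm meets $P$, that the endpoint of $P$ on that arm lies between the two endpoints of $Q$ (reorienting/relabelling as needed), and that there is a short cycle containing both $P$ and $Q$ — the latter coming from splicing $o$ with the short $\theta$-cycle. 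That puts $G$ in $\Wcal^*$.

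The main obstacle, as flagged, is the bookkeeping in step (1)/(3): I need to choose the arms, the branching vertex, the orientations of $P$ and of the arms, and the pieces of $o$ and of the $\theta$-cycles so that the ``betweenness'' condition in the definition of $\Wcal^*$ is literally satisfied and every cycle I invoke genuinely has length at most $r$. This is essentially a finite case analysis — which arm is common to the two short $\theta$-cycles, whether $o$ passes through $v$ or $w$, on which arm(s) $o$ re-enters $\theta$, and whether $o$ already contains a full short cycle through $P$ on its own. I expect that in every case one of the two target configurations can be extracted; the length bounds all reduce to the observation that replacing one arc of a cycle of length $\le r$ by a no-longer path, or taking the shorter of two arcs, keeps the length at most $r$. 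Once the right subgraph and the right witnessing short cycle(s) are named, membership in $\Wcal$ or $\Wcal^*$ is immediate from the definitions, and \autoref{theta_short-path} then follows by combining this with \autoref{auxi1}, \autoref{auxi2} and \autoref{3con_to_wheel} (applied to the weighted suppression), via \autoref{subgraph_for_free}.
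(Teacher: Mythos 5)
There is a genuine gap, and it starts with the reading of the hypothesis. In \autoref{theta_short-path} the path $P$ is not an arbitrary path of $G$: the cycle $o$ of length at most $r$ contains \emph{both} a branching vertex of $\theta$ \emph{and} the path $P$ as a subpath (this is exactly how it is used later: $P$ gives rise to a $\theta$-bridge of $o$). Your proposal treats $o$ and $P$ as unrelated objects ("the hypothesis only gives us a short cycle $o$ through a branching vertex, not necessarily one through $P$"), and under that weaker reading the statement is simply not true -- take a theta-graph of parameter $r$, attach a very long path $P$ between interiors of two arms, and attach a small triangle at a branching vertex to serve as $o$; this graph contains no member of $\Wcal\cup\Wcal^*$. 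So the fact $P\subseteq o$ is not optional bookkeeping, it is the engine of the proof, and your argument never uses it.

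Even granting the correct hypothesis, the construction you sketch does not address the actual difficulty. Membership in $\Wcal$ (or $\Wcal^*$) requires a cycle of length at most $r$ through the attached path(s) \emph{inside the extracted subgraph}, i.e.\ consisting of the attached path plus a path of the theta-graph. Your step (1) -- concatenate $P$ with an arc of a short theta-cycle and hope the length bound holds -- can fail for every choice of arc, and your fallback step (3) asserts, without any argument, that "splicing $o$ with the short $\theta$-cycle" produces a short cycle through both $P$ and $Q$ in the required position; no mechanism is given for this, and in general $o$ leaves $\theta$ along several arcs (multiple bridges and detours), so $o$ itself is not usable as the witnessing cycle of the subgraph. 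This is precisely what the paper's proof is about: it first reduces to a cycle with a single $\theta$-bridge via the primary/secondary bridge dichotomy (\autoref{desired_scool}, \autoref{primary_school}), and then runs an induction on the number of detours (\autoref{detour_nice}, \autoref{single_school}), at each step either re-routing $o$ through the replacement path or re-routing $\theta$ through the detour (whichever is shorter, which is what keeps all length bounds), with the $\Wcal^*$ outcome arising only in one specific configuration (one bridge plus a weakly adjacent, non-free detour). None of this reduction machinery, nor any substitute for it, appears in your proposal, so the "finite case analysis" you defer to is exactly the unproved core of the lemma.
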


\begin{proof}[Proof that \autoref{theta_short-path_tech} implies \autoref{theta_short-path}.]
 This is a direct consequence of \autoref{auxi1} and \autoref{auxi2}.  
\end{proof}

We prepare to prove \autoref{theta_short-path_tech}.  Let $\theta$, $o$ and $P$ be as in 
\autoref{theta_short-path}.

An \emph{arc} is a nontrivial\footnote{A path is \emph{nontrivial} if it contains at least one 
edge.} subpath $Q$ of $o$ such that $Q$ intersects the theta-graph $\theta$ 
precisely in its 
endvertices. We say that $Q$ is a 
($\theta$-)\emph{bridge} if its endvertices are interior vertices of different arms of the 
theta-graph $\theta$. Otherwise, we say that $Q$ is a ($\theta$-)\emph{detour}. Note that the path 
$P$ from the assumption includes a bridge, so there is at least one bridge.

\begin{rem}
 In this proof we will step by step improve the theta-graph $\theta$ and the cycle $o$. Roughly 
speaking, this means that we will eliminate the arcs one by one -- until at most two of them are 
left over. Then we will find a configuration in the classes $\Wcal$ or $\Wcal^*$.
We start by describing relevant properties of arcs. 
\end{rem}

No two arcs have adjacent internal vertices (here a vertex of a path is internal if it is not an 
endvertex). 
We say that an arc $R$ is \emph{adjacent} to an arc $S$ if there is a subpath $X$ of $o$ joining 
some of their endvertices such that this subpath does not contain any internal vertices of arcs. 
\begin{eg}\label{three}
 If there are at least three arcs, each arc is adjacent to precisely two other arcs. These two arcs 
are distinct. 
\end{eg}
We say that an arc $R$ is \emph{weakly adjacent} to an arc $S$ if it is adjacent to $S$ and the 
endvertex of $R$ on an 
arc $X$ witnessing adjacency is a branching vertex of $\theta$. It is \emph{strongly adjacent} if  
 the endvertex of $R$ on such a path $X$ is not a branching vertex of $\theta$.
\begin{eg}
An adjacent arc that is not weakly adjacent is strongly adjacent. 
 If there are at least three arcs, no two arcs can be strongly adjacent and weakly adjacent. 
\end{eg}

The endvertices of a detour $Q$ are contained in a single arm. This arm is uniquely determined 
-- 
unless the two endvertices of the detour are the two branching vertices of $\theta$. In this case, 
we choose the unique arm that does not contain any endvertex of the path $P$. We 
refer to this uniquely defined arm as the arm \emph{circumvented} by $Q$ (relative to the bridge 
$P$). 
The \emph{replacement path} of $Q$ is the unique subpath of the circumvented arm between the two 
endvertices of $Q$. We denote the replacement path of $Q$ by $Q'$. 

Given a bridge $B$, a detour $Q$ is \emph{$B$-free} if its replacement path $Q'$ has no internal 
vertex that is an 
endvertex of $B$. 

\begin{lem}\label{detour_nice}
Assume there is exactly one bridge $B$, and at least one detour.
Then there is a detour that is $B$-free  or that is strongly adjacent to $B$ --
or else $\theta\cup o$ is in the class $\Wcal^*$. 
\end{lem}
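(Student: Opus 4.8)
The plan is to argue by contradiction and by a minimality/extremality choice. Since $P$ itself is a bridge and we are assuming $B$ is the only bridge, there is no freedom there; the difficulty lies with the detours. First I would set up the situation along the cycle $o$: by \autoref{three} (and the surrounding observations), the arcs appear in a cyclic order around $o$, and since there is exactly one bridge $B$ and at least one detour, each detour is adjacent to at most two other arcs, and at least one detour is adjacent to $B$. So the natural first move is to look at a detour $Q$ that is adjacent to $B$; if it is strongly adjacent we are done, so assume every detour adjacent to $B$ is weakly adjacent to $B$.

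The key step is to understand what goes wrong when no detour is $B$-free: then for every detour $Q$, the replacement path $Q'$ has an interior vertex that is an endvertex of $B$. Since $B$ has only two endvertices $p_0,p_1$ (lying on two different arms of $\theta$), and each $Q'$ lies inside the arm circumvented by $Q$, this severely constrains which arms can be circumvented — the circumvented arm of $Q$ must be one of the (at most two) arms containing an endvertex of $B$, and moreover $Q$ must "straddle" that endvertex. I would then take a detour $Q$ adjacent to $B$ and weakly adjacent to $B$, look at the arc $X$ witnessing adjacency, note that the shared endvertex is a branching vertex $v$ of $\theta$, and track the other endvertex of $Q$: it is an interior vertex of the circumvented arm, and that arm (or, via the replacement-path condition, its interior) must contain an endvertex of $B$. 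This is exactly the combinatorial pattern of class $\Wcal^*$: a theta-graph with the bridge $B$ attached at interior vertices of two different arms, plus a further path (built from $Q$ together with a piece of the arc $X$ running to the branching vertex $v$) attached at a branching vertex and an interior vertex of an arm that also carries an endvertex of $B$, with the endvertex of $B$ sandwiched between the two endvertices of that further path on the arm. The remaining task is then to exhibit a single cycle of length at most $r$ containing both $B$ and this further path; this is supplied by $o$ itself (or a subpath-replacement of $o$ of no greater length), since $B$, $Q$ and the connecting segment of $X$ all lie on $o$, and detours can be short-cut through their replacement paths without increasing length.

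Concretely, the order of steps I would carry out is: (1) record that there is a detour $Q$ adjacent to $B$, and reduce to the case that it is weakly adjacent, i.e. the witnessing arc $X$ meets $\theta$ at a branching vertex $v$ of $\theta$ at the $Q$-side end; (2) reduce to the case that $Q$ is not $B$-free, so the replacement path $Q'$ contains an endvertex of $B$ in its interior, and identify the arm $A$ circumvented by $Q$ as the arm of $\theta$ carrying that endvertex of $B$; (3) identify the endvertices of $Q$ on $A$ and check that the relevant endvertex of $B$ on $A$ lies strictly between them — if not, one can re-route $o$ through $Q'$ to kill the detour $Q$ and contradict the extremal choice (or simply replace $o$ by a shorter/equal cycle still satisfying the hypotheses); (4) assemble the witness structure: the theta $\theta$, the attached bridge $B=P$, and the attached path $Q''$ consisting of $Q$ together with the subpath of $X$ from the $Q$-endvertex of $X$ to the branching vertex $v$; (5) verify the length condition by exhibiting the required cycle inside $o$ (replacing detours by replacement paths as needed) and conclude $\theta\cup o\in\Wcal^*$.

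The main obstacle I anticipate is step (3)/(4): making sure the path $Q''$ we attach really is internally disjoint from the rest of the structure in the way $\Wcal^*$ demands, and that the "in between" ordering condition on the arm holds. This needs a careful look at the cyclic order of the arc-endpoints along $o$ and the fact (stated in the excerpt) that no two arcs have adjacent internal vertices and that strong/weak adjacency are mutually exclusive when there are at least three arcs — so I would handle the case of exactly two arcs ($B$ and one detour) and the case of at least three arcs slightly differently, but in both the same $\Wcal^*$ pattern emerges. Producing the certifying cycle of length $\le r$ should be routine once the incidences are pinned down, since everything lives on $o$ and replacement paths are never longer than the subpaths they replace.
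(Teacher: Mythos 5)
Your handling of the two-arc case (only $B$ and a weakly adjacent, non-$B$-free detour $Q$) matches the paper: there $\theta\cup o$ is exactly $\theta$ with $B$ and $Q$ attached, $o$ itself is the certifying cycle of length at most $r$, and non-$B$-freeness gives precisely the ``in between'' condition required by $\Wcal^*$. The genuine gap is the case of at least three arcs. There the stated third alternative $\theta\cup o\in\Wcal^*$ is simply unavailable: graphs in $\Wcal^*$ are subdivisions of the $4$-wheel (cycle rank $4$), whereas $\theta\cup o$ with the bridge and two or more detours has cycle rank at least $5$. So in that case you must actually exhibit a detour that is $B$-free or strongly adjacent to $B$, and your proposal never does this; it only asserts that ``the same $\Wcal^*$ pattern emerges''. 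The paper's proof supplies the missing idea at exactly this point: it takes a second detour $R$ adjacent to $B$ (it exists by \autoref{three}), builds a trail $S$ along $o$ through $Q$, then along the replacement path $Q'$ up to an endvertex of $B$, then through $B$ and along $o$ into $R$, and uses the trail property (the branching vertices $v$ and $w$ appear as distinct interior vertices of $S$, and the far endvertex $z$ of $R$ cannot lie on $Q'$) to conclude that $R$ is strongly adjacent to $B$ or $B$-free.

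The mechanism you propose instead --- re-routing $o$ through replacement paths to ``kill'' detours, on the grounds that replacement paths are never longer than the detours they replace, and appealing to an extremal choice --- does not work. Replacement paths lie on arms of $\theta$, and only two of the three cycles of a theta-graph of parameter $r$ are required to have length at most $r$, so a replacement path can be far longer than the detour it replaces and the re-routed cycle may exceed length $r$; this is exactly why the proof of \autoref{single_school} later splits into two cases and, when the replacement path is longer, modifies $\theta$ rather than $o$. Moreover, \autoref{detour_nice} carries no minimality hypothesis to contradict (the extremal choice of $(\theta,o)$ is made only in the proof of \autoref{single_school}), and even if such a re-routing succeeded it would certify a $\Wcal^*$ pattern in a modified graph, not that $\theta\cup o$ itself lies in $\Wcal^*$, so it would not prove the lemma as stated.
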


\begin{proof}
By assumption there is a detour $Q$ that is adjacent to the bridge $B$. We may assume, and we do 
assume, that the detour $Q$ is weakly adjacent and not $B$-free. In particular, one endvertex 
of $Q$ is not a branching vertex of $\theta$. 
Thus if $B$ and $Q$ are the only arcs, then the graph  $\theta\cup o$ is in the class 
$\Wcal^*$. 
Hence we may assume, and we do assume, that there is another detour. By \autoref{three}, there is a 
detour $R$ adjacent to $B$ that is distinct from $Q$. By relabelling the branching vertices if 
necessary, we may assume, and we do assume, that the branching vertex $v$ is an endvertex of the 
detour $Q$.

   \begin{figure} [htpb]   
\begin{center}
   	  \includegraphics[height=3cm]{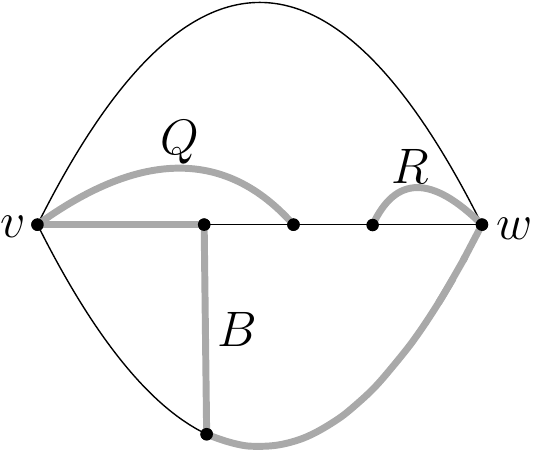}
   	  \caption{The path $S$ is highlighted in grey.}\label{fig:long_path}
\end{center}
   \end{figure}

Next we construct a subtrail $S$ of the cycle $o$, see \autoref{fig:long_path} (this subtrail of 
$o$ is a subpath of $o$ or equal to $o$). Start the 
detour $Q$ at the endvertex different 
from the branching vertex $v$ all the way to the vertex $v$, then follow the replacement 
path 
$Q'$ until you hit an endvertex of $B$ (which must happen eventually as the detour $Q$ is not 
$B$-free by assumption), take the bridge $B$, follow the cycle $o$ until you hit the detour $R$, 
then follow $R$. Denote this trail by $S$. 

We refer to the first vertex of the trail $S$ on the detour $R$ by $w$. If the vertex $w$ is not a 
 branching vertex, 
then $R$ is strongly adjacent to $B$, and we are done.
Hence we may assume, and we do assume, that $w$ is a branching vertex. 
As $S$ is a trail, its interior vertices $v$ and $w$ cannot be identical. 

Hence the two endvertices of the trail $S$ cannot be branching vertices, and thus are 
interior vertices of arms of $\theta$. As the path $o\sm S$ (between the two endvertices of 
$S$) 
does not include a bridge and no branching vertex of $\theta$, this path must have both its 
endvertices on a single arm of $\theta$. 

Denote the endvertex of the detour $R$ different from 
$w$ by $z$. 
By the above, the vertex $z$ is on the arm of the theta-graph $\theta$ circumvented by $Q$.
Note that the replacement path $Q'$ is equal to the subpath of this arm from 
$v$ to the endvertex of $B$ on that arm. 
As $S$ is a trail, its endvertex $z$ cannot be on the subpath $Q'$. 
So the replacement path for $R$ is disjoint from the path $Q'$, and so in particular does not 
contain the endvertex of the bridge $B$ on the arm circumvented by $R$. Thus the detour $R$ 
is $B$-free. This completes the proof. 
\end{proof}

\begin{lem}\label{single_school}
 If there is at most one bridge, then $\theta\cup o$ has a subgraph 
in the class $\Wcal\cup \Wcal^*$. 
\end{lem}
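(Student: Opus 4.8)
The plan is to induct on the number of arcs of the cycle $o$ with respect to $\theta$ (equivalently, on the number of detours). First note that, since the path $P$ from the setup of \autoref{theta_short-path} contains a bridge, the hypothesis ``at most one bridge'' in fact yields exactly one bridge; call it $B$, and let $b_1,b_2$ be its endvertices, which are interior vertices of two distinct arms of $\theta$. For the \textbf{base case}, suppose $o$ has no detour. Then $B$ is the only arc of $o$, so $o\se\theta\cup B$. Since $\theta$ is a theta-graph of parameter $r$, the path $B$ is attached at interior vertices of different arms, and $\theta\cup B$ contains the cycle $o$ of length at most $r$ through $B$, the graph $\theta\cup B$ lies in $\Wcal$; as $\theta\cup B\se\theta\cup o$, we are done.

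For the \textbf{inductive step} assume $o$ has a detour. I would apply \autoref{detour_nice} to $\theta$, $o$ and the unique bridge $B$: either $\theta\cup o\in\Wcal^*$ and we are done, or we obtain a detour $Q$ that is $B$-free or strongly adjacent to $B$. Among all such detours fix one whose replacement path $Q'$ --- a subpath of the arm $A$ circumvented by $Q$ --- is inclusion-minimal; this minimality guarantees that no other arc of $o$ has an endvertex in the interior of $Q'$, so the surgeries below do not create new arcs. I would then eliminate $Q$, dropping the arc count by one while preserving all hypotheses: if $Q'$ is no longer than $Q$, replace $o$ by the closed walk $(o\sm\operatorname{int}(Q))\cup Q'$ --- by the choice of $Q$ this is a cycle, of length at most $|o|\le r$, still containing a branching vertex and still having $B$ as its unique bridge, with one arc fewer --- and recurse on $(\theta,o)$; if $Q'$ is strictly longer than $Q$, instead reroute the arm $A$ of $\theta$ through $Q$, i.e.\ replace the subpath $Q'$ of $A$ by $Q$, obtaining a theta-graph $\theta'$ whose arms are no longer than those of $\theta$ and which therefore still has parameter $r$, with respect to which $o$ has one arc fewer, unchanged length $\le r$, and --- because $Q$ is $B$-free, so no endvertex of $B$ lies in $\operatorname{int}(Q')$ --- still has $B$ as its unique bridge; recurse on $(\theta',o)$. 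In either case the recursion bottoms out at the base case, and the subgraph it produces sits inside $\theta\cup o$.

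The delicate point --- and what I expect to be the \textbf{main obstacle} --- is the case where the detour $Q$ handed to us by \autoref{detour_nice} is strongly adjacent to $B$ but \emph{not} $B$-free while $Q'$ is longer than $Q$: then the arm-rerouting of the previous paragraph would delete an endvertex of $B$ from $\theta'$. I would handle this by exploiting the subpath $X$ of $o$ witnessing strong adjacency of $Q$ to $B$ (which, having no internal arc vertices, runs inside $\theta$): absorbing $Q$ into $A$ while simultaneously merging $B$ with $X$ produces a path that is again an arc of $o$ with respect to $\theta'$ and joins interior vertices of distinct arms, i.e.\ a new unique bridge, with the arc count still decreasing by one. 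Checking carefully that this merged path really is an arc, that it is a bridge rather than a detour, and that no second bridge is created --- in tandem with the bookkeeping (via the innermost choice of $Q$) that rules out arc endvertices inside $\operatorname{int}(Q')$ and hence that no new arcs appear --- is the part of the argument that needs the most care.
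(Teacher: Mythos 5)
Your strategy is essentially the one the paper uses: invoke \autoref{detour_nice}, then repeatedly eliminate a detour $Q$, either by rerouting $o$ through the replacement path $Q'$ (when $Q'$ is no longer than $Q$) or by rerouting the circumvented arm of $\theta$ through $Q$ (when $Q$ is shorter), and land in $\Wcal$ once no detour is left; the paper phrases this as an extremal choice of a pair $(\theta,o)$ with a single bridge and as few detours as possible rather than an induction, which is only a cosmetic difference. However, there are two places where your write-up falls short, and they are exactly where the paper does additional work.

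First, the inclusion-minimal choice of $Q'$ does not deliver what you claim. Minimality taken over detours that are $B$-free or strongly adjacent to $B$ does not prevent some other detour (outside that candidate set), nor a stretch of $o$ that simply runs along the circumvented arm, from meeting the interior of $Q'$: vertices of $o$ lying on $\theta$ need not be endvertices of arcs at all. Consequently $(o\sm Q)\cup Q'$ need not be a cycle, and the arm-rerouting can merge several old arcs into a single new arc, so ``no new arcs are created'' is unjustified. The paper's remedy is different: in the first case it passes to a cycle included in the closed walk that still contains the bridge, and in the second case it argues that every new detour contains an old detour other than $Q$, so the number of detours still strictly decreases; it also checks separately that the bridge remains unique and that the new cycle still passes through a branching vertex of the (new) theta-graph, an invariant you need in order to stay in the setting of \autoref{theta_short-path} and re-apply \autoref{detour_nice}. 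Your induction survives with this bookkeeping, but not with the minimality argument as stated. Second, the case you yourself flag as the main obstacle -- $Q$ strongly adjacent to $B$ but not $B$-free, with $Q'$ longer than $Q$ -- is left as a sketch. This is precisely the paper's Case~2B: the bridge is extended along the subpath $X$ of $o$ witnessing strong adjacency to an endvertex of $Q$, which after the rerouting is an interior vertex of the new arm (strong adjacency guarantees it is not a branching vertex), and one then checks that this extended path is the unique bridge for the new pair and that the detour count has dropped. Your plan points in the right direction, but as submitted that step, and with it the proof, is incomplete.
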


\begin{proof}
From the graph $H=\theta\cup o$ we pick a theta-graph $\theta'$ of parameter $r$ and a 
cycle $o'$ of length at most $r$ containing a branching vertex of $\theta'$ so that $o'$ has only a 
single bridge. 
\begin{comment}
 (Note that this implies that $o'$ contains a branching vertex of $\theta'$ and contains 
interior vertices of two distinct arms of $\theta'$. )
\end{comment}
Such 
a choice is possible as we could simply take $\theta$ and $o$. 
Now we pick $o'$ 
and $\theta'$ amongst all possible choices so that there are as few detours as possible. 
By replacing `$(\theta,o)$' by `$(\theta',o')$' if necessary, we may assume, and we do assume, that 
$(\theta,o)$ has as few detours as possible. Denote the unique $\theta$-bridge included in $o$ 
by $P$. Assume that the graph $H$ has no subgraph in the class $\Wcal^*$. 

Suppose for a contradiction, there is a $\theta$-detour included in the cycle $o$. Let $Q$ be a 
detour. By \autoref{detour_nice}, we may assume, and we do assume, that $Q$ is $P$-free 
or strongly adjacent to $P$. 
We denote the arm circumvented by $Q$ by $A$, and the replacement path for $Q$ by $Q'$. 

We distinguish two cases. 

{\bf Case 1:} the length of $Q'$ is at most the length of $Q$. We obtain $o'$ from $o$ by 
replacing the path $Q$ by $Q'$. As the bridge $P$ and the detour $Q$ are internally disjoint, 
the closed walk $o'$ includes the path $P$. 
We obtain $o''$ from $o'$ by taking a cycle included in the closed walk 
$o'$ that includes the path $P$. This ensures that $P$ is a $\theta$-bridge included in $o''$. 
Now let $R$ be a subpath of $o''$ that intersects $\theta$ precisely at its endvertices. As $o''\sm 
\theta\se o$, this subpath $R$ is a subset of the cycle o. Thus 
$R$ is equal to the bridge $P$ or a detour included in $o$. 
We conclude that $P$ is the only $\theta$-bridge included in $o''$ and there are strictly less 
$\theta$-detours included in $o''$. It follows that the path $o''\sm P$ includes a branching 
vertex of $\theta$. 
Hence the pair $(\theta,o'')$ contradicts the minimality of $(\theta,o)$. Thus we get a 
contradiction in this case. 

{\bf Case 2:} the length of $Q'$ is strictly larger than the length of $Q$.
We obtain $\theta'$ from the theta-graph $\theta$ by replacing the path $Q'$ by $Q$.
As the path $Q$ does not contain any other vertices of $\theta$ except for its endvertices, the 
graph $\theta'$ is a theta-graph. And its parameter is at most that of $\theta$. Note that the 
theta-graphs 
$\theta$ and $\theta'$ have the same branching vertices. 
In particular, the cycle $o$ includes a branching vertex of the theta-graph $\theta'$. 
By our choice of $Q$ according to 
\autoref{detour_nice}, we have to consider the following two subcases. 

{\bf Case 2A:} the detour $Q$ is $P$-free. 
As the bridge $P$ and the 
detour $Q$ are internally disjoint, the path $P$ is a $\theta'$-bridge included in $o$. And it is 
the only $\theta'$-bridge.
All $\theta'$-detours include some $\theta$-detour aside from $Q$. Thus there are less 
$\theta'$-detours than $\theta$-detours. This gives a 
contradiction to the choice of $(\theta,o)$ in this subcase. 

{\bf Case 2B:} the detour $Q$ is strongly adjacent to $P$. 
We obtain $P'$ from $P$ by adding a subpath $X$ from an endvertex of $P$ to an endvertex of $Q$ 
witnessing that $P$ and $Q$ are strongly adjacent. Hence the path $P'$ does not contain any 
branching vertex, and so is a $\theta'$-bridge. All other $\theta'$-arcs are included in $o$ and 
are disjoint from the subpath $X$ of $o$, and include 
$\theta$-arcs. These $\theta$-arcs are $\theta$-detours. 
So $P'$ is the only $\theta'$-bridge. 
As $Q$ is not included in a 
$\theta'$-arc, this gives a 
contradiction to the choice of $(\theta,o)$ in this subcase. 

\vspace{.15cm}

Having considered all cases, we conclude that there is no detour. Hence the graph $\theta\cup o$ is 
 in the class $\Wcal$. This completes the proof. 
\end{proof}

We refer to one of the branching vertices included in the cycle $o$ as $v$. 
We say that a bridge $Q$ is \emph{primary} if it has an endvertex $x$ such that a shortest path 
within the cycle $o$ from $x$ to the vertex $v$ includes the bridge $Q$. A bridge that is not 
primary 
is \emph{secondary}. 

\begin{lem}\label{desired_scool}
If there is a primary bridge $Q$, there is a cycle $u$ of length at most $r$  containing $v$ such 
that $Q$ is 
the only $\theta$-bridge of $u$. 
\end{lem}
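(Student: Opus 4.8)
\textbf{Proof plan for Lemma~\ref{desired_scool}.}
The plan is to build the cycle $u$ by starting from a shortest path inside $o$ that witnesses the primarity of $Q$, and then closing it up using $o$ itself in a way that avoids creating a second bridge. So first I would take an endvertex $x$ of the primary bridge $Q$ together with a shortest $x$--$v$ path $S$ inside the cycle $o$ that contains $Q$. Since $S$ lies on $o$ and $o$ has length at most $r$, the path $S$ has length at most $r/2$; let $S'$ be the other $v$--$x$ path in $o$, so $o=S\cup S'$. The first candidate for $u$ is just $o$ itself, but $o$ may have several bridges; the point is that $S$ already contains one bridge (namely $Q$), and I want to replace the portion $S'$ by something that contributes no bridge.

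Next I would look at what $S'$ contributes. Every $\theta$-arc of $o$ other than those inside $S$ is a subpath of $S'$ (up to its endvertices); each such arc is either a detour or a bridge. For each arc $R\subseteq S'$ that is a \emph{bridge}, I would replace $R$ by its ``shortcut'' along the theta-graph: since $R$ is a bridge, its two endvertices lie on interior points of two distinct arms of $\theta$, and I can route through the branching vertex $v$ — but more to the point, I can reroute $R$ through $\theta$ so that the resulting closed walk still contains $v$, still contains $Q$, and has strictly fewer bridges. The key length bookkeeping: because $R\subseteq S'\subseteq o$ and $o$ has length at most $r$, and because at least two of the three cycles of $\theta$ have length at most $r$, the detour-replacement path for $R$ inside $\theta$ is controlled, so the new closed walk still has length at most $r$. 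Then I extract a cycle inside the new closed walk that still contains $P$-side data, i.e.\ still contains the branching vertex $v$ and still contains the bridge $Q$; this is exactly the ``take a cycle inside the closed walk'' move used in the proof of Lemma~\ref{single_school} (Case~1). Iterating, I arrive at a cycle $u$ of length at most $r$ through $v$ in which $Q$ is the only bridge.

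The main obstacle I anticipate is the length control in the rerouting step: when I replace a bridge $R\subseteq S'$ by a detour through $\theta$, I must be sure the new closed walk does not exceed length $r$, and I must be sure I am not accidentally reintroducing a bridge elsewhere or destroying $Q$. The right way to handle this is probably not to do the replacements on $S'$ one at a time at all, but instead to define $u$ directly as $S$ together with the \emph{shortest} $v$--$x$ path $T$ inside $\theta\cup o$ that avoids the interior of $Q$ and avoids creating a bridge — i.e.\ a $v$--$x$ path that, outside of a single arm of $\theta$, stays on $o$; then $u=S\cup T$ has length at most (length of $S$) $+$ (length of $o\sm Q$ portion, which is at most that of $S'$), hence at most $r$, and by construction every $\theta$-arc of $u$ distinct from $Q$ lies on the portion of $o$ used by $T$, which was chosen to contain only detours. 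I would phrase the formal argument as: choose among all cycles $u$ of length at most $r$ through $v$ that contain $Q$ one with the fewest bridges, and then show via the detour-replacement / cycle-extraction move that if $u$ had a second bridge $R$, the minimality of $(\theta,o)$ (together with Lemma~\ref{detour_nice}) would be contradicted, exactly paralleling Lemma~\ref{single_school}. This reduces the lemma to the length estimate on the replacement path, which is the one genuinely new computation and the place to be careful.
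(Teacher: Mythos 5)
Your first step coincides with the paper's: take an endvertex $x$ of the primary bridge $Q$ and a shortest $x$--$v$ path $S$ inside $o$ containing $Q$, so that $|S|\le r/2$. After that, however, the paper uses a device you are missing. It \emph{truncates}: it takes the shortest subpath $S'$ of $S$ starting at $v$ that completes a bridge, so that the $o$-portion of the future cycle has length at most $r/2$ and contains exactly one bridge $Q'$ (possibly a bridge \emph{other} than $Q$, namely the one on $S$ nearest to $v$), and then closes up with a path $R$ from the endvertex of $S'$ back to $v$ lying entirely inside $\theta$ and of length at most $r/2$ --- such a path exists because that endvertex is interior to an arm, every arm lies on one of the two cycles of $\theta$ of length at most $r$, and that cycle contains $v$. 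The closed walk $S'R$ then has length at most $r$, contains $v$, traverses exactly one bridge, and its remaining arcs are whole detours or $\theta$-edges, so the cycle extracted from it has a unique bridge. In other words, the paper only establishes that \emph{some} bridge is the unique bridge of $u$, which is all that is used in the proof of \autoref{theta_short-path_tech}; it does not (and need not) keep $Q$ itself.

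Your proposal insists on keeping $Q$, i.e.\ on using all of $S$, and that is where the genuine gap lies. First, $S$ may contain further bridges between the far endvertex of $Q$ and $v$, and no choice of the return path $T$ can remove them, so $u=S\cup T$ need not have $Q$ as its only bridge. Second, the length estimate you explicitly defer is exactly the step that fails: there is no reason a bridge-free $v$--$x$ path $T$ in $\theta\cup o$, internally disjoint from $S$, should have length at most $|S'|$. Rerouting a piece of $o$ through $\theta$ is controlled only via the two short cycles of $\theta$ (which gives bounds of the form $r/2$ for a path from a single vertex to $v$), not relative to the length of the bridge being replaced; a bridge of length one may only admit $\theta$-reroutes of length close to $r$. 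Hence both your iterative replacement scheme and the direct bound $|u|\le |S|+|S'|\le r$ are unjustified. Third, your fallback --- choose among all cycles of length at most $r$ through $v$ containing $Q$ one with fewest bridges and contradict ``the minimality of $(\theta,o)$'' --- is circular and out of scope: the existence of any such cycle is essentially what the lemma asserts, and no minimality of $(\theta,o)$ is assumed in \autoref{desired_scool}; that minimisation belongs to the proof of \autoref{single_school}, which is applied only after this lemma has reduced to a single bridge. The repair is the paper's truncation: give up on $Q$ and settle for the bridge on $S$ nearest to $v$, closing through $\theta$ with a path of length at most $r/2$.
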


\begin{proof}
As the bridge $Q$ is primary, it has an endvertex $x$ such that the shortest path $S$ from $x$ to 
$v$ within the cycle $o$ includes the primary bridge $Q$.
Take a shortest subpath $S'$ of the path $S$ starting at $v$ that includes a bridge. 
By minimality, the path $S'$ includes only 
a single bridge, and edges of $S'$ outside this bridge are on the theta-graph $\theta$ or in 
detours.
Denote that single bridge on $S'$ by 
$Q'$ and the endvertex of $S'$ different from $v$ by $x'$.
As the theta-graph $\theta$ has parameter $r$, it includes a path $R$ from $x'$ to $v$ of length at 
most $r/2$. The concatenation of the paths $S'$ and $R$ is a closed walk of length at most $r$ that 
traverses the bridge $Q'$ once. 
It contains the vertex $v$. 
Hence this closed walk includes a cycle $u$ including the bridge 
$Q'$. By construction, this cycle $u$ has exactly one bridge. 
\end{proof}

\begin{lem}\label{primary_school}
 There cannot be two secondary bridges. 
\end{lem}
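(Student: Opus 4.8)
The plan is to pin down, for a secondary bridge, a single point of the cycle $o$ that it must contain, and then observe that this point is unique. Write $\ell:=|o|$ for the length of $o$, and parametrise $o$ by arc-length starting at $v$, so that each point $p$ of $o$ (a vertex, or an interior point of an edge) sits at some position $s\in[0,\ell)$ and has distance $d(p)=\min\{s,\ell-s\}$ to $v$ measured along $o$. The function $d$ attains its maximum value $\ell/2$ at exactly one point $m$ of $o$, the \emph{antipode} of $v$ on $o$. I will show that a bridge is secondary if and only if it contains $m$ in its interior; since there is only one such point $m$ and distinct arcs overlap very little, this immediately gives that at most one bridge is secondary.

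The key claim is: a bridge $Q$, with endvertices $a$ and $b$, is secondary if and only if $m$ lies in the interior of $Q$ (i.e. $m\notin\{a,b\}$). First note $v\notin Q$: as a branching vertex of $\theta$, $v$ is not an interior vertex of an arm, hence not an endvertex of the bridge $Q$, and it is not an interior vertex of $Q$ since $Q$ meets $\theta$ only in its endvertices. So $Q$ lies inside one of the two arcs of $o$ determined by $v$; following $o$ from $v$ towards $Q$, say we meet $a$ first and then $b$. Let $\alpha$ be the length of the arc of $o$ from $v$ to $a$ that avoids $Q$, and $\beta$ the length of the arc from $b$ to $v$ that avoids $Q$, so $\alpha+|Q|+\beta=\ell$. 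The two arcs of $o$ from $a$ to $v$ have lengths $\alpha$ and $|Q|+\beta$, and a shortest one includes $Q$ exactly when $|Q|+\beta\le\alpha$; symmetrically a shortest arc from $b$ to $v$ includes $Q$ exactly when $|Q|+\alpha\le\beta$. Hence $Q$ is primary iff $|\alpha-\beta|\ge|Q|$, and $Q$ is secondary iff $\alpha<\beta+|Q|=\ell-\alpha$ and $\beta<\alpha+|Q|=\ell-\beta$, i.e. iff $\alpha<\ell/2$ and $\beta<\ell/2$. These two inequalities are equivalent to $\alpha<\ell/2<\alpha+|Q|$, which says exactly that the point at arc-position $\ell/2$ lies strictly between $a$ (at position $\alpha$) and $b$ (at position $\alpha+|Q|$) along $Q$; that is, $m$ is an interior point of $Q$. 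This proves the claim.

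To conclude, observe that distinct arcs of $o$ share no edge and no interior vertex: the interior of an arc misses $\theta$ while its endvertices lie on $\theta$, so an overlap between two distinct arcs, continued along $o$, would place an endvertex of one arc (which is on $\theta$) into the interior of the other (which is not). Therefore the single point $m$ can be an interior point of at most one bridge, and by the claim at most one bridge is secondary, as desired. The only delicate spot is the bookkeeping around ties and degeneracies — when the two arcs from an endvertex of $Q$ to $v$ have equal length, and according to whether $m$ is a vertex or an interior point of an edge — but under the reading that ``a shortest path'' means ``some arc of minimum length'' these cases are all subsumed by the inequalities above, and I expect this routine case-checking, rather than any genuine difficulty, to be the main thing to get right when writing the argument out.
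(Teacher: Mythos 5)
Your proof is correct and takes essentially the same approach as the paper: your antipodal point $m$ at arc-position $\ell/2$ is the continuous counterpart of the paper's ``barrier'' vertices, and both arguments rest on the fact that the point of $o$ farthest from $v$ lies in the interior of at most one bridge, while every bridge avoiding it is primary. Your explicit inequality characterisation of secondary bridges ($|\alpha-\beta|<|Q|$) is just a cleaner way of handling the ties that the paper treats implicitly.
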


\begin{proof}
Either there is a single vertex of the cycle $o$ that has maximum distance from the vertex $v$ in 
the cycle $o$, or there is an edge of $o$ such that its two endvertices are the only vertices with 
maximum distance from $v$. 
We 
refer to 
these vertices as \emph{barriers}. 
 As different bridges do not have adjacent interior vertices (and if there are two barriers they 
are adjacent), there can be at most one bridge that has a 
barrier as an interior vertex.

It suffices to show that any bridge $Q$ that has no barrier as an interior vertex is primary. 
Let $x$ and $y$ be the two endvertices of the bridge $Q$. 
Now consider the path $o-v$. In the middle we have the barriers. 
As the bridge $Q$ has no barrier as an internal 
vertex, the vertices $x$ and $y$ must be on the same side of the barriers on $o-v$. Take the vertex 
of $x$ or $y$ that is nearest to the barriers. The shortest path to $v$ in $o$ from that vertex 
includes the bridge $Q$. Thus $Q$ is a primary bridge. Hence all but at most one bridge is primary. 
\end{proof}
We conclude this subsection as follows.

\begin{proof}[Proof of \autoref{theta_short-path_tech}.]
Let $G$ be a graph that contains a theta-graph $\theta$ of parameter 
$r$ such 
that there is a cycle $o$ of length at most $r$ containing a branching vertex of $\theta$ and a 
path $P$ between interior vertices of different arms of $\theta$ that avoids the branching vertices 
of 
$\theta$.
Let $H$ be the subgraph of $G$ obtained by taking the union of the theta-graph $\theta$ and the 
cycle $o$. 

If there is a primary bridge, then by  
\autoref{desired_scool} 
we can modify the cycle $o$ so that there is only one $\theta$-bridge.
Otherwise by \autoref{primary_school}, there is only a single bridge. In either case, 
we may assume, and we do assume, that there is only a single bridge. By 
\autoref{single_school}, $\theta\cup o$ has a subgraph in the class $\Wcal\cup \Wcal^*$.
\end{proof}
   
As we have shown after the statement of  \autoref{theta_short-path_tech} above that it implies 
\autoref{theta_short-path}, we have also completed the proof of that lemma.

\subsection{Finding bounded wheels}\label{subsec:constr_theta}

In this subsection we prove two lemmas, which allow us to find bounded wheels in certain 
situations. They are used in the proof of \autoref{3con_to_wheel}. 

Roughly speaking, the next lemma says that we can improve a given theta-graph or else we find a 
bounded subdivision of a wheel.

\begin{lem}\label{theta_improved}
Assume $G$ has a theta-graph $\theta$ of parameter $r$ with branching vertices $v$ and $w$. 
Let $o$ be a cycle of length at most $r$ including a shortest $v$-$w$-path.
Then there is a theta-graph $\theta'$ of parameter $r$ whose branching vertices are $v$ and $w$ 
that includes the cycle $o$ -- or $G$ includes an $r$-local subdivision of a wheel.
\end{lem}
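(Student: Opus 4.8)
The plan is to use the tools developed in \autoref{det:bd_wheel}, in particular \autoref{theta_short-path}. We have a theta-graph $\theta$ with branching vertices $v$ and $w$, and a cycle $o$ of length at most $r$ including a shortest $v$-$w$-path $P_{vw}$. If $o$ is already an arm-to-arm configuration fitting \autoref{theta_short-path}, we are done. So the difficulty is when $o$ interacts with $\theta$ in a more complicated way: $o$ shares long stretches with the arms of $\theta$, or meets $\theta$ only near the branching vertices. First I would record that since $o$ contains the shortest $v$-$w$-path $P_{vw}$, both branching vertices $v$ and $w$ lie on $o$, and $P_{vw}$ is one of the two arcs of $o$ between $v$ and $w$.

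**The main case analysis.** Let $P_1$, $P_2$, $P_3$ be the three arms of $\theta$, with two of the cycles $P_i \cup P_j$ of length at most $r$. The key observation is that $P_{vw}$, being a shortest $v$-$w$-path, has length at most the length of the shorter of each $P_i$, hence at most $r/2$ if both arms of a short cycle are at most $r/2$ — more carefully, $P_{vw}$ has length at most $\min_i |P_i|$. I would first handle the case where $P_{vw}$ is internally disjoint from $\theta$ (or meets $\theta$ only in $v$ and $w$): then replacing any arm $P_i$ of $\theta$ by $P_{vw}$ produces a new theta-graph $\theta'$ with the same branching vertices; one checks $\theta'$ has parameter $r$ because $P_{vw}$ is no longer than $P_i$, so the short cycles stay short. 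But we also need $\theta'$ to include the whole cycle $o$, not just $P_{vw}$; the other arc of $o$ from $v$ to $w$ is some path $S$, and if $S$ is internally disjoint from $\theta'$ we can take $\theta'$ to consist of $P_{vw}$, $S$, and one further arm — then $o = P_{vw} \cup S \subseteq \theta'$, as required. The remaining possibility is that $S$, or $P_{vw}$, meets the arms of $\theta$ in interior points.

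**Invoking the detection lemma.** When $o$ does meet the arms of $\theta$ in interior points, I would look for a subpath of $o$ that is a $\theta$-bridge (joins interior vertices of different arms) while avoiding the branching vertices — this is exactly the hypothesis "there is a path $P$ between interior vertices of different arms of $\theta$ that avoids the branching vertices of $\theta$" needed by \autoref{theta_short-path}. Combined with the fact that $o$ contains a branching vertex (namely $v$ or $w$), \autoref{theta_short-path} then gives an $r$-local subdivision of the $3$- or $4$-wheel, hence of a wheel, and we are in the second alternative of the conclusion. So the real work is: \emph{either} find such a bridge inside $o$, \emph{or} conclude that $o$ interacts with $\theta$ only through detours and endpoints, in which case the surgery of the previous paragraph goes through to produce $\theta'$. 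I would make this dichotomy precise by considering the maximal subpaths of $o$ that intersect $\theta$ only at their endpoints (the "arcs" of \autoref{det:bd_wheel}): if one of them is a bridge, apply \autoref{theta_short-path}; otherwise every such arc is a detour circumventing a single arm, and I would push the detours onto the corresponding arms (shortening when the replacement path is shorter, or rerouting the arm when it is longer, mimicking the Case~1/Case~2 argument of \autoref{single_school}) to reduce to the internally-disjoint case handled above.

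**Expected obstacle.** The main obstacle is the bookkeeping in the surgery step: when we reroute an arm of $\theta$ through a detour of $o$, we must simultaneously guarantee (i) the result is still a theta-graph with branching vertices $v$ and $w$, (ii) its parameter is still $r$, and (iii) the \emph{entire} cycle $o$ (or a modified cycle $o''$ of length at most $r$ still containing a shortest $v$-$w$-path) sits inside the new theta-graph. Keeping all three invariants while iterating over possibly several detours is the delicate part; I expect to set it up as a minimality argument — choose the pair $(\theta', o')$ with as few arcs (or detours) of $o'$ as possible among all admissible choices, and derive a contradiction from the existence of any detour, exactly in the spirit of \autoref{single_school}. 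The wheel-producing alternative then only needs to be invoked once, at the point where a genuine bridge appears.
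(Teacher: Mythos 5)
Your first half matches the paper: if $o$ contains a path between interior vertices of different arms of $\theta$ avoiding $v$ and $w$, then \autoref{theta_short-path} (whose other hypothesis holds because $o$ contains $v$ and $w$) yields the wheel. The gap is in the other half. The paper needs no detour-elimination surgery at all: once $o$ contains no such ``weak bridge'', each of the two $v$-$w$-paths included in $o$ meets the interior of at most one arm of $\theta$, so by pigeonhole among the three arms there is an arm $Q$ intersecting $o$ precisely in $\{v,w\}$, and one can simply take $\theta'=o\cup Q$. The parameter is verified directly: $o$ itself is one cycle of $\theta'$ of length at most $r$, and if $R\subseteq o$ is the shortest $v$-$w$-path and $o'$ is a cycle of $\theta$ of length at most $r$ containing $Q$, then the length of $RQ$ is at most that of $o'$, hence at most $r$. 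This is the idea your proposal is missing: the new theta-graph does not have to be obtained from the old one by local modifications, and in particular $o$ need not be internally disjoint from $\theta$ --- the detours of $o$ along the other two arms are simply irrelevant, because those arms are discarded.

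Beyond being unnecessary, your fallback surgery as sketched does not go through. The lemma's conclusion (and its use in the proof of \autoref{3con_to_wheel}) requires $\theta'$ to include the \emph{given} cycle $o$, so the Case~1 move you borrow from \autoref{single_school} --- replacing a detour of $o$ by its shorter replacement path, i.e.\ modifying $o$ --- is not available; a ``modified cycle $o''$'' is not good enough. The complementary Case~2 move, rerouting an arm of $\theta$ through a detour, is only safe for the parameter when the detour is no longer than its replacement path, which is exactly the case Case~1 was meant to absorb. So in the situation where a detour is strictly longer than its replacement path your iteration has no admissible move, and the minimality argument you envisage cannot be set up while keeping all three of your invariants. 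Replacing that whole step by the pigeonhole observation above closes the gap.
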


\begin{proof}
A \emph{weak $\theta$-bridge} is a path $P$ that does not contain any branching vertices of 
$\theta$ but 
joins interior vertices of different arms of $\theta$.  If the cycle $o$ included a weak 
$\theta$-bridge, 
$G$   includes an $r$-local subdivision of a wheel by \autoref{theta_short-path}.

Hence we may assume, and we do assume, that $o$ includes no weak $\theta$-bridge. Hence each of the 
two 
$v$-$w$-paths included in $o$ intersects at most one arm of $\theta$ at interior vertices. So there 
is an arm $Q$ of $\theta$ that intersects the cycle $o$ precisely in the vertices $v$ and $w$. So 
$o\cup Q$ is a theta-graph. 
By assumption, the cycle $o$ includes a shortest $v$-$w$-path; call it $R$. 
To see that $o\cup Q$ is a theta-graph of parameter $r$, we show that 
the cycle $RQ$ has length at most $r$. Let $o'$ be a cycle including $Q$ of length at most 
$r$ included in the old theta-graph $\theta$. As $R$ is a shortest $v$-$w$-path, 
the length of $RQ$ is at most that of $o'$. So $RQ$ has length at most $r$.
Thus $o\cup Q$ is a theta-graph of parameter $r$, as desired.  
\end{proof}

Roughly speaking, the next lemma gives conditions under which a bounded wheel can be built from a 
bounded fan.

\begin{lem}\label{cycle_plus_fan_to_wheel}
 Assume $G$ has a cycle $o$ of length at most $r$. 
 Assume there is a fan $F$ of parameter $r$ centered at a vertex $v$ of $o$ whose ending and 
starting edges are the two edges incident with $v$ on $o$. Assume there is a vertex $w$ of $o$ not 
contained in the fan $F$.
Assume no piece of $F$ contains interior vertices of both $v$-$w$-paths included in $o$. 

Then $G$ has an $r$-local subdivision of a wheel. 
\end{lem}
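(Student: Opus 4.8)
The plan is to hand the degenerate cases to \autoref{theta_short-path} and to treat the generic case by directly building an $r$-bounded (hence, by the example following \autoref{bounded2}, $r$-local) subdivision of a wheel centred at $v$, whose pieces are pieces of the fan together with one exceptional piece pinned down by a telescoping length estimate against $o$. First set up notation: write the pieces of $F$ as $o_1,\dots,o_n$, each of the form $vL_iM_iR_iv$, and put $T_0:=L_1$, $T_i:=R_i=L_{i+1}$ for $1\le i\le n-1$ and $T_n:=R_n$; these ``spokes'' run from $v$ to outer vertices $x_0,\dots,x_n$, with $o_i=T_{i-1}\cup M_i\cup T_i$ and $M_i$ an $x_{i-1}$--$x_i$-path. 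By hypothesis the two edges of $o$ at $v$ are the first edges of $T_0$ and $T_n$; let $v'\in T_0$ and $v''\in T_n$ be the corresponding neighbours of $v$ on $o$, and $P,P'$ the two $v$--$w$-subpaths of $o$, through $v'$ resp.\ $v''$. Since $v',v''\in F$ while $w\notin F$, the vertex $v'$ is interior to $P$ and $v''$ to $P'$, so the hypothesis gives that $o_1$ meets $P'$ only in $v$ and $o_n$ meets $P$ only in $v$; call $o_i$ a \emph{$P$-piece} if it contains an interior vertex of $P$ (hence, by hypothesis, none of $P'$) and a \emph{$P'$-piece} symmetrically.

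Next, moving along $P$ from $v$ towards $w$ let $a$ be the last vertex lying in $F$; it exists (as $v\in F$, $w\notin F$), lies in the interior of $P$, and lies on a $P$-piece; define $b$ symmetrically on $P'$. Put $A:=P[a,w]$, $B:=P'[b,w]$, $D:=A\cup B$; then $D$ is an $a$--$b$-path through $w$ with $D\cap F=\{a,b\}$ and $a\ne b$. Let $o_j$ be the $P$-piece of largest index containing $a$ and $o_k$ the $P'$-piece of smallest index containing $b$, so $j<k$; since $a\notin o_{j+1}$ and $b\notin o_{k-1}$ one gets $a\notin T_j$ and $b\notin T_{k-1}$, hence $a$ is an interior vertex of the $v$--$x_j$-arc $T_{j-1}\cup M_j$ of $o_j$, and $b$ an interior vertex of the $x_{k-1}$--$v$-arc $M_k\cup T_k$ of $o_k$. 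If $k=j+1$ (which includes the case $n=2$), then via $o_j\cap o_{j+1}=T_j$ the graph $o_j\cup o_{j+1}$ is a theta-graph of parameter $r$ (as $|o_j|,|o_{j+1}|\le r$) with branching vertices $v,x_j$; here $a$ lies in the interior of one arm and $b$ of another, and $D$ avoids the branching vertices (note $a,b\notin\{v,x_j\}$); since $o$ is a cycle of length $\le r$ through $v$, \autoref{theta_short-path} applies and we are done.

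So assume $k\ge j+2$. I would build a wheel with centre $v$, spokes in cyclic order $\sigma_a,T_j,T_{j+1},\dots,T_{k-1},\sigma_b$ reaching $a,x_j,\dots,x_{k-1},b$, where $\sigma_a$ is a $v$--$a$-path of length at most $|P[v,a]|$ and $\sigma_b$ a $v$--$b$-path of length at most $|P'[v,b]|$, and rim obtained by concatenating the $a$--$x_j$-arc of $o_j$, the arcs $M_{j+1},\dots,M_{k-1}$, the $x_{k-1}$--$b$-arc of $o_k$, and finally $D$ from $b$ back to $a$. Granting internal disjointness of all these paths, this is a subdivision of the wheel on $k-j+2$ rim vertices: the pieces between consecutive $T_i$ are exactly $o_{j+1},\dots,o_{k-1}$; the two pieces flanking $\sigma_a$ and $\sigma_b$ can be taken to be $o_j$ and $o_k$; all of these have length $\le r$; and the exceptional piece $\sigma_b\cup D\cup\sigma_a$ has length at most
\[
|P[v,a]|+(|A|+|B|)+|P'[v,b]|=|P|+|P'|=|o|\le r,
\]
using $|A|=|P|-|P[v,a]|$ and $|B|=|P'|-|P'[v,b]|$. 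Hence every piece has length $\le r$, so $G$ contains an $r$-bounded, in particular $r$-local, subdivision of a wheel.

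The main obstacle is legitimising this last construction. The spokes $\sigma_a,\sigma_b$ are subpaths of $o$ (or shortest paths), which may re-enter $F$ and meet the middle pieces $o_{j+1},\dots,o_{k-1}$, their spokes, or the rim arcs; and the alternation of $P$-pieces and $P'$-pieces around the fan need not be a clean initial/final segment, which is exactly what is needed in order to know the flanking and middle pieces really are $o_j,o_k$ and $o_{j+1},\dots,o_{k-1}$. I expect to control this by passing to a minimal configuration — among all cycles $o'$ and vertices $w'$ satisfying the hypotheses for the given fan, minimise $|o'|$ and then the number of vertices of $o'\cap F$ — so that no subpath of $o$ can be shortcut through a piece of $F$; concretely, one iteratively replaces $P[v,a]$ by the arc $T_{j-1}\cup M_j$ of $o_j$ (a length-non-increasing move, legitimate once one has first pushed $a$ onto $o_1$, and symmetrically on the $P'$-side), after which $\sigma_a$ and $\sigma_b$ follow the first and last spokes and the flanking pieces are forced to be $o_1$ and $o_n$. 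The residual checks — internal disjointness of the spokes and rim arcs, and presence of at least three pieces — are then of the routine, if somewhat tedious, kind already met in the proofs of \autoref{theta_short-path_tech} and \autoref{auxi1}.
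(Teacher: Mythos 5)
Your overall strategy coincides with the paper's: build the wheel centred at $v$ from a block of consecutive fan pieces, two ``flanking'' pieces touching $o$, and one exceptional piece obtained by repairing $o$, with the length of the exceptional piece controlled against $|o|\le r$. But the part you yourself flag as ``the main obstacle'' is exactly where the entire content of the lemma lies, and your sketch for closing it does not work. First, your choice of the flanking pieces is wrong as stated: you take $o_j$ to be the $P$-piece of largest index containing $a$ and $o_k$ the $P'$-piece of smallest index containing $b$ and assert $j<k$, but $P$-pieces and $P'$-pieces can interleave along the fan, so $j<k$ can fail, and even when $j<k$ nothing makes the intermediate pieces $o_{j+1},\dots,o_{k-1}$ avoid $o$; if they contain interior vertices of $P$ or $P'$, your rim arcs $M_{j+1},\dots,M_{k-1}$ need not be internally disjoint from $D$, $\sigma_a$, $\sigma_b$. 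Second, the repair you propose in the minimality argument --- replacing $P[v,a]$ by the fan arc $T_{j-1}\cup M_j$ and calling this ``length-non-increasing'' --- is unjustified and in general false: the fan arc from $v$ to $a$ can be longer than $P[v,a]$, and without that bound your estimate $|\sigma_a|\le|P[v,a]|$ (which your $\le r$ bound on the exceptional piece relies on) collapses. You also never verify that the modified configuration still satisfies the hypotheses of the lemma (that the fan's start/end are still the edges of the new cycle at $v$, that $w$ is still available, that the colouring condition still holds), which the proposed induction on a ``minimal configuration'' would require.

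The paper resolves both difficulties with two specific devices you are missing. It colours a piece by $P_k$ if it contains an interior vertex of $P_k$ (each piece gets at most one colour by hypothesis) and then chooses $i<j$ so that $o_i$ and $o_j$ are coloured with \emph{different} paths and \emph{all pieces strictly between them are uncoloured}; such a pair exists because the first and last pieces are coloured differently. This is what guarantees the middle pieces are disjoint from $o$ away from $v$, so internal disjointness (Sublemma~\autoref{rk_nice}) can be proved directly. For the lengths, instead of forcing one route, it sets $S_k$ to be the \emph{shorter} of the subpath $vP_ka_k$ of $o$ and the fan subpath $Q_k$ of the flanking piece, where $a_k$ is the vertex of the flanking piece on $P_k$ nearest $w$; then replacing $Q_k$ by $S_k$ in the flanking piece and $vP_ka_k$ by $S_k$ in $o$ are both length-non-increasing, so all pieces of the new wheel automatically have length at most $r$, with no minimisation and no appeal to \autoref{theta_short-path}. (Your reduction of the case $k=j+1$ to \autoref{theta_short-path} is legitimate but unnecessary once the construction is set up this way.) As it stands, your proposal records the right target configuration but leaves the decisive selection and length/disjointness arguments unproven, so it has a genuine gap.
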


\begin{proof}
We denote the two $v$-$w$-paths included in $o$ by $P_1$ and $P_2$. 
If a piece $o_i$ of the fan $F$ contains an interior vertex of a path $P_k$, we colour that piece 
with that path $P_k$. By assumption each of piece $o_i$ is coloured with at most one path $P_k$, 
while some may not be coloured at all. 

As the first and last piece of the fan $F$ are coloured with different paths $P_k$, the fan $F$ has 
at least two pieces. Moreover, there are two pieces $o_i$ and $o_j$ with $i<j$ that are coloured 
with different paths $P_k$ such that all 
pieces $o_m$ in between (that is, with $i<m<j$) are not coloured at all.
Now we consider the subfan $o_i,...,o_j$ of the original fan $F$. Denote that subfan by $F'$.

We claim that the subgraph of $G$ that is the union of the cycle $o$ and the fan $F'$ includes 
an $r$-local subdivision of a wheel. The center of this wheel is the vertex $v$, its pieces are the 
pieces $o_m$ of the fan $F'$ with $i<m<j$, cycles $o_i'$ and $o_j'$ constructed from $o_i$ and 
$o_j$, respectively, and a cycle $o'$ constructed from $o$ as follows.

By symmetry, we assume that the piece $o_i$ contains interior vertices of the path $P_1$, 
and the piece $o_j$ contains interior vertices of the path $P_2$.
Let $a_1$ be a vertex of the piece $o_i$ on the path $P_1$ that is nearest to the vertex $w$ on 
$P_1$; note that this uniquely defines the vertex $a_1$.

As the path $P_1-v$ is disjoint from the piece $o_{i+1}$, the vertex $a_1$ is an interior vertex 
of the subpath $L_iM_i$ of the piece $o_i$. 
Let $Q_1$ be the subpath of the path $L_iM_i$ from $v$ to $a_1$.

Similarly, let $a_2$ be a vertex of the piece $o_j$ on the path $P_2$ that is nearest to the vertex 
$w$ on 
$P_2$. And let $Q_2$ be the subpath of the path $M_jR_j$ from $a_2$ to $v$.

Given $k\in \{1,2\}$, one of the paths $vP_ka_k$ and $Q_k$ is not longer than the other; pick such a 
path and denote it 
by $S_k$.

\begin{sublem}\label{rk_nice}
Given $k\in \{1,2\}$, the path $S_k$ 
intersects a piece $o_m$ with 
$i\leq m\leq j$ only in the vertex $v$ -- unless $k=1$ and $m=i$ or else $k=2$ and $m=j$. 
 The paths $S_1$ and $S_2$ intersect only at the vertex $v$. 
\end{sublem}

\begin{proof}
By symmetry, it suffices to consider the case where $k=1$. 
If the path $S_1$ is chosen to be a subpath of the path $P_1$ (which is included in the cycle $o$), 
the sublemma is immediate. So we may assume, and we do assume that the path $S_1$ is a subpath of 
the piece $o_i$. As a piece $o_{m}$ with $i<m\leq j$ does not intersect the subpath $L_iM_i$ of the 
piece $o_i$ in interior vertices, the piece $o_{m}$ intersects the path $S_1$ only in the vertex 
$v$. 
It remains to show that the paths $S_1$ and $S_2$ intersect only in the vertex $v$. If one of them 
is a subpath of a path $P_k$, this is immediate. Otherwise, the paths $S_1$ and $S_2$ are proper 
subpaths of the paths $L_iM_i$ and $M_jR_j$ containing the vertex $v$, and thus can only intersect 
in the vertex $v$. 
\end{proof}

We obtain the cycle $o_i'$ from $o_i$ by replacing the path $Q_1$ by $S_1$. 
Similarly, we obtain the cycle $o_j'$ from $o_j$ by replacing the path $Q_2$ by $S_2$.
We obtain the cycle $o'$ from $o$ by replacing the paths $vP_ka_k$ by the paths $S_k$ (for $k=1,2$).

By the choice of the vertex $a_k$ and \autoref{rk_nice}, the cycle $o'$ intersects the 
cycle $o_i'$ precisely in the path $S_1$, and $o'$ intersects the cycle $o_j'$ precisely in the 
cycle $S_2$.

By \autoref{rk_nice}, the cycles $o'$, $o_i'$, $o_j'$ and $o_m$ with $i<m<j$ are the pieces of a 
subdivision of a wheel. 
By construction all these pieces have length at most $r$. Thus this 
subdivision of a wheel is $r$-local. 
\end{proof}

\subsection{Final step}\label{subsec:combine}

\begin{proof}[Proof of \autoref{3con_to_wheel}.]
 Let $G$ be an $r$-locally $3$-connected graph. Our aim is to find an $r$-local subdivision of a 
wheel.  
By \autoref{nice_geo}, the graph $G$ is triangular with parameter $r$ or has 
a geodesic cycle of length at most $r$ with at least four edges. 
If it is triangular, we are done by \autoref{3con_to_wheel_special}.

Hence we may assume, and we do assume, that $G$ has a geodesic cycle of length at most $r$ with 
at least four edges. Denote that cycle by $o'$. Pick two vertices $v_0$ and $v_1$ that are not 
adjacent on the cycle $o'$. Let $P$ be a shortest $v_0$-$v_1$-path included in the geodesic cycle 
$o'$. 
Now apply \autoref{pre-fan-exists} to $o'$, $v_0$ and $v_1$.
We get a pre-fan $F$ of 
parameter $r$ centered at some vertex $v_i$ none of whose pieces contains the vertex $v_{i+1}$ (for 
some $i\in \Fbb_2$). And there is a geodesic cycle $o$ of 
length at most $r$ including $P$ such that the 
start and end of the pre-fan are the neighbours of $v_i$ on $o$. By \autoref{pre-fan_to_fan}, we 
may assume, and we do assume, that the pre-fan $F$ is a fan. 
By symmetry, we may assume, and we do assume, that the center of the fan $F$ is $v_0$. 

As we would be done otherwise, by \autoref{cycle_plus_fan_to_wheel}
there is a piece $o_i$ of the fan $F$ that contains interior vertices of the two $v_0$-$v_1$-paths 
included in the cycle $o$. Indeed, all other assumptions of that lemma are satisfied by $o$ and 
$F$.

\begin{sublem}\label{theta-exists}
 There is a theta-graph $\theta$ of parameter $r$ containing a shortest path between its branching 
vertices with at least two edges. 
\end{sublem}

\begin{proof}
Let $Q$ be a subpath of the cycle $o_i$ that intersects the cycle $o$ precisely at its endvertices 
such that these endvertices are on different $v_0$-$v_1$-paths 
included in the cycle $o$.  

The graph $o\cup Q$ 
is a theta-graph. As the cycle $o$ is geodesic, it includes a shortest path between the branching 
vertices. The cycle obtained by concatenating this path with the path $Q$ cannot be longer than the 
cycle $o_i$. Thus the theta-graph $o\cup Q$ has parameter $r$. 

As the cycle $o$ is geodesic, it contains a shortest path between the two branching 
vertices. Each of the two paths between the branching vertices included in the cycle $o$ contains 
one of the vertices $v_0$ or $v_1$. Hence the theta-graph $o\cup Q$ includes a shortest path with 
at least two edges. 
\end{proof}

Let $\theta$ be a theta-graph as in \autoref{theta-exists}. Denote its two branching vertices by 
$\bar v$ and $\bar w$ and a shortest path between them by $\bar P$. Let $\bar o$ be a cycle of 
$\theta$ including $\bar P$ that does not include an edge between the branching vertices $\bar v$ 
and $\bar w$ (such a choice is possible as $\theta$ contains two paths between the vertices $\bar 
v$ 
and $\bar w$ aside from $\bar P$). 
Now we apply \autoref{pre-fan-exists} to the cycles $\bar o$, the path $\bar P$ and the vertices 
$\bar v$ and $\bar w$.  This part of the proof is similar to the application above but now with the 
theta-graph $\theta$, we have slightly stronger assumptions and will hence be able to complete the 
proof.

 We get a pre-fan $\bar F$ of 
parameter $r$ centered at one of the vertices $\bar v$ or $\bar w$, say $\bar v$, none of 
whose pieces contains the vertex $\bar w$. And there is a cycle 
$\bar u$ of 
length at most $r$ including $\bar P$ such that the 
start and end of the pre-fan are the neighbours of $\bar v$ on $\bar u$. By 
\autoref{pre-fan_to_fan}, we 
may assume, and we do assume, that the pre-fan $\bar F$ is a fan. 

As we would be done otherwise, by \autoref{cycle_plus_fan_to_wheel}
there is a piece $\bar o_i$ of the fan $\bar F$ that contains interior vertices of the two 
$\bar v$-$\bar w$-paths 
included in the cycle $\bar u$. 
By replacing the theta-graph $\theta$ by the theta-graph guaranteed by \autoref{theta_improved} if 
necessary, we may assume, and we do assume, that the cycle $\bar u$ is included in the theta-graph 
$\theta$. By \autoref{theta_short-path} applied to $\theta$ and a suitable subpath of the piece 
$\bar o_i$, the graph $G$ includes an $r$-local subdivision of a wheel.  
\end{proof}

\section{Concluding remarks}

Splitter theorems have turned out to be a key tool when it comes to characterising certain minor 
closed classes of graphs or matroids. And in fact they can be proven in fairly general settings.
For example Chun, Mayhew and Oxley proved a splitter theorem for internally 
4-connected binary matroids \cite{chun2012towards}. We expect that there is the following splitter 
theorem for $r$-locally $3$-connected graphs. 

\begin{con}
Every $r$-locally $3$-connected graphs contains an edge to delete or contract such that $r$-local 
$3$-connectivity is preserved --- unless $G$ is $K_4$.
\end{con}

A \emph{cycle-decomposition} is a graph decomposition whose decomposition graph is a cycle.
For example, every path-decomposition is a cycle decomposition. 
A natural future application of \autoref{intro_series-para} might be to characterise ($r$-locally 
$2$-connected) graphs that have a cycle decomposition of width at most two and locality at least 
$r$.
It is expected that the characterisation of graphs of path-width at most two 
\cite{kinnersley1994obstruction}, see also \cite{yangseries},  extends to this setting in the 
natural way.

More generally, it is expected that \autoref{intro_series-para} can be used to extend excluded 
minors characterisations for any minor-closed subclass of the class of series-parallel graphs 
to our new setting of local separators. 

Another direction in which one could try to extend \autoref{intro_series-para} is to characterise 
graphs with graph-decompositions of width at most three and locality at least $r$. 
The starting point would be the characterisation of graphs of tree-width at most three 
by Arnborg, Proskurowski \& Corneil (1990) and Satyanarayana \& Tung (1990) 
\cite{{arnborg1990forbidden}, {satyanarayana1990characterization}}. 
\begin{comment}
  For $k = 3$, there are four forbidden minors: K5, the graph of the octahedron, the pentagonal 
prism graph, and the Wagner graph. Of these, the two polyhedral graphs are planar
\end{comment}

\bibliographystyle{plain}
\bibliography{literatur}

\end{document}